\documentclass[11pt,a4paper,final]{amsart}
\usepackage[latin1]{inputenc}
\usepackage{amsmath}
\usepackage{amsfonts}
\usepackage{amssymb}
\usepackage{amsthm}
\usepackage{mathrsfs}
\usepackage{graphicx}
\usepackage{a4wide}
\usepackage{bbm}
\usepackage{todo}
\usepackage{latexsym}
\usepackage{cite}
\usepackage{mathrsfs}
\usepackage{bbm}
\usepackage[dvipsnames]{xcolor}
\usepackage{subfigure}
\usepackage{lmodern}
\usepackage{float}
\usepackage{epsfig}
\usepackage{epstopdf}
\usepackage{mathtools}
\usepackage{listings}
\usepackage{pgf}
\usepackage[T1]{fontenc}
\usepackage[latin1]{inputenc}
\usepackage[english]{babel}
\usepackage{pifont,tabularx}
\usepackage{stmaryrd}
\usepackage{dsfont}
\usepackage{theoremref}
\usepackage{nicefrac}
\usepackage{enumerate}
\usepackage{natbib}
\usepackage{subfigure}

\usepackage{tikz}
\usepackage{pgfplots}
\usepackage{pgfplotstable}
\pgfplotsset{compat=1.7}
\usepackage{caption}
\usepackage{hyperref}
\hypersetup{
	colorlinks,
	citecolor=blue,
	linkcolor=blue,
	urlcolor=teal}

\numberwithin{equation}{section}

\theoremstyle{plain} 
\newtheorem{thm}{Theorem}[section]
\newtheorem{prop}[thm]{Proposition}
\newtheorem{cor}[thm]{Corollary}
\newtheorem{lem}[thm]{Lemma}

\newtheorem{obs}[thm]{Remark}

\DeclareMathOperator*{\argmin}{arg\,min}

\theoremstyle{remark}

\usepackage{graphicx} % Required for inserting images

\title{Mixing time and isoperimetry of long-range percolation in the diffusive regime}

\author{Dieter Mitsche}
\address{IMC, Pontificia Universidad Catolica de Chile}
\curraddr{Avenida Vicu\~na Mackenna 4860, Santiago, Chile}
\email{dieter.mitsche@uc.cl}
\thanks{}

\author{Carlo Scali}
\address{School of Computation, Information and Technology, Technische Universit{\"a}t M{\"u}nchen}
\curraddr{Boltzmannstra{\ss}e 3, 85748, Garching bei M{\"u}nchen}
\email{carlo.scali@tum.de}
\thanks{}

\date{\today}

\begin{document}
\begin{abstract}
We consider long-range percolation (LRP) on $\mathbb{Z}^d, d \ge 1$ with connection probabilities $\mathbf{p}(x, y) \approx \frac{\beta}{\|x-y\|^s}$ for $\beta>0$ and $s > \min(2d, d+2)$. Our main result is that the mixing time of the random walk on the giant component of supercritical LRP inside a box of side length $n$ is of order $n^2$. To establish the upper bound, we show that a $d$-dimensional isoperimetric inequality holds almost surely simultaneously for all connected sets inside the giant component that are large enough.
\smallskip

\noindent
\textbf{MSC:} 82B43, 05C81 (primary), 60J27, 60K35.  

\end{abstract}
\maketitle

\section{Introduction}

The study of mixing times on random graphs provides a fundamental connection between the structural properties of random networks and the dynamics of stochastic processes defined on them. In particular, mixing time quantifies how rapidly a random walk approaches its stationary distribution, thereby measuring the efficiency with which information or randomness spreads throughout the graph. Understanding this behaviour is important for characterising the interplay between randomness, graph structure, and diffusion, with applications in probability, combinatorics, network science, and theoretical computer science. The mixing time was studied on the classical percolation model, see~\cite{BenjaminiMossel}.

Of particular interest is the case of percolation, and long-range percolation (LRP) is a particularly rich model in this context. By allowing connections between distant vertices with probabilities that decay polynomially with their Euclidean distance, it captures a remarkable spectrum of geometric behaviours and interpolates between regimes that retain strong Euclidean characteristics and regimes exhibiting pronounced small-world phenomena. Its geometric structure has been the subject of extensive investigation, and substantial progress has been made in understanding the existence and uniqueness of the infinite cluster, graph distances and diameters across the different regimes \cite{AizenmanNewman, BergerLRP, BiskupDistance1, BiskupDiameter, BiskupOscillation, BaumlerDistance1, BaumlerShapeTheo}. The behaviour of random walks on these networks is understood to a lesser extent: scaling-limit, mixing-time and heat-kernel results are available in several parameter regimes \cite{BergerBenjaminiYadin, CrawfordSly1, CrawfordSly2, BiskupKumagaiQuenched, KumagaiLocalLimit, CroydonCanKumagai}, but a complete quantitative picture is still missing. We also mention the recent breakthrough \cite{SmallWorldLRP}, in the regime in which LRP exhibits small world phenomena and \cite{BiskupKumagaiQuenched, KumagaiLocalLimit} in the same range of parameters that we study here.

\subsection{Model and main results} In this paper, we study the random graph induced by the classical long-range percolation (LRP) model on finite boxes of $\mathbb{Z}^d$ in the supercritical regime. We will denote by $\omega = \{\omega(x, y)\}_{x, y \in \mathbb{Z}^d}$ the states of the edges (i.e.\ the resulting random graph). Note that $\omega \in \Omega \in \{0, 1\}^{\mathbb{Z}^d \times \mathbb{Z}^d}$. Let $s \in (d, \infty)$, $x, y \in \mathbb{Z}^d$, $\beta>0$, and consider the probabilities
\begin{equation}\label{eqn:ConnectionProbs}
    \mathbf{p}(x, y) \coloneqq \mathbf{P}\left( \omega(x, y) = 1 \right) \sim \frac{\beta}{\|x-y\|^{s}},
\end{equation}
 where $f(x) \sim g(x)$ if $\lim_{x \to \infty} f(x)/g(x) = 1$. The Bernoulli random variables associated to the edges are independent. It is known \cite{AizenmanNewman, BergerLRP} that for every $s \in (d, \infty)$ there exists $\beta_c=\beta_c(s,d)>0$ such that the random graph percolates for all $\beta>\beta_c$. We assume throughout $\beta > \beta_c$ so that
\begin{equation*}
    \mathbf{P}\left(0 \leftrightarrow \infty\right) = \theta(\beta, s)>0.
\end{equation*}
Of particular significance is the measure $\mathbf{P}\left( \cdot \mid 0 \leftrightarrow \infty \right)$, where the percolation configuration is chosen conditioned on the event (of strictly positive probability) that the origin lies in the unique infinite connected component denoted $\mathcal{C}_\infty$. We observe that in case $d=1$ and $s>2$, one needs to assume that all nearest neighbour edges (assuming translation-invariant probabilities as above) are present to have an infinite component. We will assume this when dealing with the $1$-dimensional case in Section~\ref{sect:D1}.

For the degree of a vertex $x$, let us set $\mathrm{deg}^\omega(x) \coloneqq \sum_{z \in \mathbb{Z}^d}\omega(x, z)$ (for a general graph $G$, $\mathrm{deg}(v)$ is the degree of a vertex $v$). We define then a Markov chain $\{X_{t}\}_{t \ge 0}$ in continuous time, often called the \emph{constant speed random walk} (CSRW): on a fixed realisation of the environment $\omega$, we study the random walk, i.e.\ the continuous time Markov chain with \textbf{quenched} generator
\begin{equation*}
    (\mathcal{L}^\omega f)(x) \coloneqq \frac{1}{\mathrm{deg}^\omega(x)} \sum_{y \sim x} \omega(x, y)(f(y) - f(x)).
\end{equation*}
The normalisation of the rate by $\mathrm{deg}^\omega(x)^{-1}$ makes the random walk jump at i.i.d.\ exponential times. More precisely, there exists a sequence $\{T_n\}_{n \ge 0}$ such that $T_{n+1}- T_n, n \ge 0$ are i.i.d.\ exponential random variables of parameter $1$ and $X_{T_n}$ is the position of the random walk after the $n$-th jump. If we evaluate the resulting random walk $\{X_{t}\}_{t \ge 0}$ at the discretised times, and write $\widetilde{X}_n \equiv X_{T_n}$, the resulting discrete time Markov chain, for all $x, y \in \mathbb{Z}^d$ and any starting point $z \in \mathbb{Z}^d$ in the same connected component as $x$ of the process, has jump probabilities
\begin{equation*}
     P^\omega_z\left( \widetilde{X}_{n+1} = y \mid \widetilde{X}_n = x \right) = \frac{\omega(x, y)}{\sum_{r \in \mathbb{Z}^d}\omega(x, r)} = \frac{\omega(x, y)}{\mathrm{deg}^\omega(x)}. 
\end{equation*}
We will refer to the random walk $\{\widetilde{X}_{n}\}_{n \ge 0}$ as the \emph{embedded walk}. The point $z \in \mathbb{Z}^d$ denotes in the notation $P_z^\omega$ the almost sure starting point of the process, which typically for us will be the origin, and we call it $P_0^\omega$. For $n \in \mathbb{N}$, let $p_n^\omega(x, y)$ denote the probability that the random walk with jump probabilities $p^\omega(\cdot, \cdot)$ is at $y$ starting from $x$ in $n$ steps. Then, we introduce the \emph{heat-kernel} of $X$:
\begin{equation*}
    p_t^\omega(x, y) = \sum_{n \ge 0} \frac{t^n e^{-t}}{n!} p_n^\omega(x, y),
\end{equation*}
and we define the \textbf{annealed law} of the random walk
\begin{equation*}
    \mathbb{P}_{0}\left( X_t \in \cdot \right) \coloneqq \mathbf{E}\left[P_0^\omega\left(X_t\in \cdot\right)\right].
\end{equation*}

Let $G = (V, E)$ be an undirected finite graph with unit weights. For each vertex $v \in V$, we denote by
\begin{equation*}
    \pi(v) \coloneqq \frac{\mathrm{deg}(v)}{2|E|},
\end{equation*}
the stationary measure for the constant speed random walk in continuous time. Then, we define the \textbf{mixing time} for $\varepsilon>0$:
\begin{equation*}
    T_{\mathrm{mix}}(\varepsilon) \coloneqq \inf\left\{t \ge 0 \colon \sup_{x \in V} \frac{1}{2}\sum_{y \in V} |p^\omega_t(x, y ) - \pi(y) | \le \varepsilon \right\}.
\end{equation*}
We will also set $T_{\mathrm{mix}} = T_{\mathrm{mix}}(1/4)$. We will denote by $G_n = (V_n, E_n)$ the graph obtained by the LRP on $\Lambda_n \coloneqq [-n/2, n/2]^d \cap \mathbb{Z}^d$. Moreover, we let $\mathcal{C}^{(n)}_1$ be the largest connected component inside $G_n$ (with an arbitrary deterministic rule to break ties). The main theorem of the present article is the following theorem:

\begin{thm}\label{theo:MixingDiffusive}
    Assume $d\ge 1$ and $s>\min\{d+2, 2d\}$ in long-range percolation on $\Lambda_n$. For all $\varepsilon>0$, there exist two constants $c_\varepsilon, C_\varepsilon>0$, such that with $\mathbf{P}$-probability at least $1-\varepsilon$ we have that
    \begin{equation*}
        c_{\varepsilon} n^2 \le T_{\mathrm{mix}} \le C_{\varepsilon} n^2,
    \end{equation*}
    where $T_{\mathrm{mix}}$ denotes the mixing time of $\mathcal{C}^{(n)}_1$.
\end{thm}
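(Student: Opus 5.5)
The proof splits into a lower bound and an upper bound, both holding with probability at least $1-\varepsilon$ for suitable constants.

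\emph{Lower bound.} I would use a bottleneck (conductance) argument. Take the half-box $A=\mathcal{C}^{(n)}_1\cap\{x_1<0\}$.
\begin{itemize}
\item Since $s>d+1$, the expected number of edges crossing the hyperplane $\{x_1=0\}$ inside $\Lambda_n$ is of order $n^{d-1}$. By Markov's inequality, $|\partial_E A|\le C_\varepsilon n^{d-1}$ with high probability.
\item The giant component has density $\theta>0$ in both halves (standard for supercritical LRP with $s>d$), so $\pi(A)\in[c,1-c]$.
\end{itemize}
This only gives $\Phi_*\lesssim n^{-1}$, hence $T_{\mathrm{mix}}\gtrsim \Phi_*^{-1}\sim n$, which is too weak. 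For the diffusive lower bound I would instead use a displacement argument. Start the walk at a vertex of $\mathcal{C}^{(n)}_1$ near the centre. Since $s>d+2$, the edge lengths $\ell$ satisfy $\mathbf{E}[\ell^2]<\infty$. So the coordinate $x_1$ is, up to boundary effects, a martingale-like process whose quadratic variation up to time $t$ is of order $t$ times an averaged second moment. To make this quenched, I would use the ergodic-averaged bound $\sum_{x\in\Lambda_n}\sum_y\omega(x,y)\|x-y\|^2\le Cn^d$ with high probability, together with Doob's inequality. Together these show that at time $cn^2$ the walk started near the centre stays in the middle box of side $n/4$ with probability at least $3/4$. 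Since that box has $\pi$-mass at most $1/2$, this forces $T_{\mathrm{mix}}\ge c n^2$. In the regime $2d<s\le d+2$ (only possible for $d=1$), the second moment is infinite. Then I would truncate long edges: the number of edges longer than $n$ touching the box is $O(n^{2d-s+d})$, so with $s>2d$ they are rarely used before time $n^2$.

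\emph{Upper bound.} I would use the evolving-sets / isoperimetric-profile bound (Morris--Peres, or Fountoulakis--Reed for the discrete version):
\[
T_{\mathrm{mix}}\lesssim \int_{\pi_{\min}}^{1/2}\frac{dr}{r\,\Phi(r)^2}.
\]
The key input is the isoperimetric inequality announced in the abstract: with high probability, for every connected $S\subset\mathcal{C}^{(n)}_1$ with $(\log n)^{\kappa}\le|S|\le|\mathcal{C}^{(n)}_1|/2$, we have $|\partial S|\ge c|S|^{(d-1)/d}$. For smaller sets, the trivial bound $|\partial S|\ge 1$ applies; these contribute only polylogarithmic terms. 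Degrees are $O(\log n)$ with high probability. Both facts give $\Phi(r)\gtrsim (rn^d)^{-1/d}$ for large $r$. Plugging this in, the integral is dominated by $r\approx 1/2$ and yields $n^2$, while the small-$r$ range contributes $\mathrm{polylog}(n)\ll n^2$.

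\emph{Proving the isoperimetric inequality.} This is the main obstacle. I would use a renormalisation argument:
\begin{itemize}
\item Partition $\Lambda_n$ into blocks of side $K$. Call a block good if it contains a unique crossing cluster of density close to $\theta$, connected to the crossing clusters of its neighbouring blocks.
\item Since $s>d$, the goodness of blocks stochastically dominates high-density site percolation. By Liggett--Schonmann--Stacey domination, the bad blocks are subcritical.
\item A Peierls counting of connected block-sets (Deuschel--Pisztora style) then transfers the lattice isoperimetry to the giant component, simultaneously for all large connected sets.
\end{itemize}
The delicate point is that long edges can connect distant parts of $S$, and the boundary must still be counted in $\mathcal{C}^{(n)}_1$ and not only in the coarse lattice. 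Nearest-block connectivity through good blocks supplies enough boundary edges, but one must also ensure that $S$ is not mostly contained in bad blocks or dangling pieces; the lower size cutoff $(\log n)^\kappa$ handles these. For $d=1$, where all nearest-neighbour edges are present, the isoperimetry is trivial (any set has boundary at least $1$, i.e.\ dimension $d=1$). There the upper bound follows from the same integral, or from the path-resistance bound $T_{\mathrm{mix}}\lesssim n^2$.
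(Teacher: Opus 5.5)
\textbf{Lower bound.} Your displacement argument treats $x_1(\widetilde X_k)$ as a martingale \emph{up to boundary effects}. That is not the case on a random graph. The walk has a nonzero local drift $d^\omega(x)=\sum_y (y-x)p^\omega(x,y)$ at essentially every vertex, so the accumulated drift $\sum_{k\le t}d^\omega(\widetilde X_k)$ is a bulk term that Doob's inequality does not control. Likewise, the spatial average $\sum_{x\in\Lambda_n}\sum_y\omega(x,y)\|x-y\|^2\le Cn^d$ does not bound the quadratic variation along a path started at a fixed vertex unless the environment seen from the walker is stationary. The paper handles both issues with Kesten's reversibility trick. Under the stationary measure $\nu$ of \eqref{eqn:DefStationaryMeasure}, the time-reversed path has the same annealed law. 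Combining the forward martingale with that of the reversed path cancels the accumulated drifts, leaving $2\widetilde X_n$ as a combination of two $L^2$ martingales plus two single drift terms. The paper then passes to the quenched law by Markov's inequality, couples the walk on $\mathcal{C}_\infty$ with the walk on $\mathcal{C}_1^{(n)}$ until the exit from $\Lambda_{\kappa n}$, and concludes with \cite{PeresSousi,Oliveira}. (For $s>d+2$, your edge-length bound does give a simpler proof via the Dirichlet form. For $f(x)=x_1$ one has $\mathcal{E}(f,f)=\frac{1}{2|E|}\sum_{\{x,y\}\in E}(x_1-y_1)^2=O(1)$ while $\mathrm{Var}_\pi(f)\gtrsim n^2$. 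Hence the relaxation time $t_{\mathrm{rel}}$ is $\gtrsim n^2$, and $T_{\mathrm{mix}}\ge t_{\mathrm{rel}}\log 2$.) For $d=1$, $2<s\le 3$, your truncation remark is not an argument. $\Lambda_n$ has no edges longer than $n$, yet $\sum_{\{x,y\}\in E}|x-y|^2\gg n$ there because of edges of length well below $n$, so no second-moment bound is available. Diffusivity in this regime comes from the positive density of cut edges (since $\sum_k k\,\mathbf{p}(0,k)<\infty$ for $s>2$), and the paper cites \cite{BergerBenjaminiYadin} for this case.

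\textbf{Upper bound.} The conductance-profile integral is the right tool (the paper uses \eqref{eqn:CondMeth}), but two steps fail as written.

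First, $|\partial S|\ge c|S|^{(d-1)/d}$ together with $\max\deg\le U\log n$ only gives $\Phi(S)\gtrsim\mathrm{vol}(S)^{-1/d}(\log n)^{-(d-1)/d}$, hence $T_{\mathrm{mix}}\lesssim n^2(\log n)^{2(d-1)/d}$. You need the edge-volume form $|E(A,A^c)|\ge c|E(A,\Lambda_n)|^{(d-1)/d}$ of Theorem~\ref{thm:LBIsoGen}. This part alone could be repaired by a union bound over spanning trees of connected sets combined with Lemma~\ref{lemma:VolLargeSets}.

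Second, and this is the real missing idea, your renormalisation only yields isoperimetry for sets inside the core formed by the crossing clusters; this is the paper's Proposition~\ref{prop:IsoTruncated}. Note also that Liggett--Schonmann--Stacey needs finite-range dependence, which is why the paper works in the $N$-truncated model, supercritical by \cite{BergerLRP,BaumlerShapeTheo}, rather than invoking $s>d$. The remaining vertices of $\mathcal{C}_1^{(n)}$ lie in small truncated clusters hooked onto the core by long edges, and a lower size cutoff does not control them. A connected $S$ made of a core piece $B$ together with the dangling clusters attached to it can have boundary comparable to $|\partial B|$ while $|S|\gg|B|$. To rule this out one must show that no region of the core carries dangling mass much larger than its own.

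The paper provides this through the allocation theorem (Theorem~\ref{theo:Partition}). Every off-core vertex is assigned, within graph distance $\log^{\Delta_1}n$, to a box of side $M\ge n^{\varepsilon}$, so that each box receives edge volume $O(M^d)$. The proof reveals edges scale by scale, compares the small clusters to a subcritical branching process, and uses a concentration bound for the long edges landing in each box. The red/blue edge case analysis of Section~\ref{sect:UB} then gives Theorem~\ref{thm:LBIsoGen}, with the threshold $n^{\eta}$ coming from the mesoscopic scale $M$. Your $d=1$ upper bound is fine and matches the paper's resistance argument.
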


The upper bound in  Theorem \ref{theo:MixingDiffusive} will be established through the well-known connection between isoperimetric profiles and mixing times (see Section \ref{sect:Cond}). We summarise our results on the isoperimetric properties of connected sets in the LRP in the following theorem:

\begin{thm}\label{thm:LBIsoGen}
     Let $d\ge 2$ and $s>\min\{d+2, 2d\}$. There exists a constant $c_{\mathrm{low}}>0$ such that for all $\eta>0$ there exists $c>0$ such that, for all $n$ large enough, with $\mathbf{P}$-probability at least $1 - \exp\{-c \log^2(n)\}$ for all $A \subseteq \mathcal{C}^{(n)}_1 \colon n^{\eta} \le |E(A, \Lambda_n)| \le |E(\mathcal{C}^{(n)}_1, \mathcal{C}^{(n)}_1)|/2$ we have that
\begin{equation*}
    |E(A, A^c)| \ge c_{\mathrm{low}} |E(A, \Lambda_n)|^{\frac{d-1}{d}}.
\end{equation*}
\end{thm}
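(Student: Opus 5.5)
We prove Theorem~\ref{thm:LBIsoGen} by renormalization: compare connected subsets of $\mathcal{C}^{(n)}_1$ with subsets of a coarse-grained lattice on which a Euclidean isoperimetric inequality holds deterministically, and control the long edges by a separate volume estimate.

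\textbf{Step 1: coarse-graining.} Fix a scale $K=K(n)$ of order $C\log n$, possibly a large power of $\log n$, and partition $\Lambda_n$ into boxes of side $K$. Call a box \emph{good} if three things hold:
\begin{itemize}
\item its vertices in the infinite-type cluster form a unique crossing cluster containing at least a fraction $\theta/2$ of the box;
\item this crossing cluster is connected to the crossing clusters of all neighbouring boxes inside the union of the two boxes;
\item every vertex of the box has degree at most $\log^2 n$, and the number of long edges (length larger than $K$) incident to the box is at most a constant times $K^d$.
\end{itemize}
Since $s>d$, LRP contains a supercritical finite-range-dependent structure, so for $d\ge2$ we can take $\beta$ large or use the standard renormalization of \cite{BergerLRP}. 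Known results then show that a box is bad with probability at most $\exp(-cK^{\delta})$. The bad boxes are dominated by subcritical Bernoulli site percolation (finite-range dependence and Liggett--Schonmann--Stacey), so with probability $1-\exp(-c\log^2 n)$ every $*$-connected cluster of bad boxes has size at most $\log n$. On this event the crossing clusters of good boxes merge into a component of size at least $c|\Lambda_n|$, which is therefore $\mathcal{C}^{(n)}_1$. The small pieces of $\mathcal{C}^{(n)}_1$ lying in bad regions hang off good boxes at bounded coarse distance.

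\textbf{Step 2: volume versus coarse footprint.} For $A\subseteq\mathcal{C}^{(n)}_1$, let $\mathcal{B}(A)$ be the set of good boxes meeting $A$, where each bad cluster is attached to an adjacent good box. The degree bound and the box sizes give $|E(A,\Lambda_n)|\lesssim K^{d}\log^2 n\,|\mathcal{B}(A)|$. The edge volume also needs a lower bound in terms of long edges. Here we use $s>\min\{d+2,2d\}$: the expected number of edges of length larger than $r$ touching a fixed box is about $K^d r^{d-s}$. A union bound over connected sets gives that the total mass of long edges is comparable to the volume. In particular no small set $A$ can have $|E(A,\Lambda_n)|$ dominated by long edges, because every vertex has degree at most $\log^2 n$ and the volume is at least $n^\eta$.

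\textbf{Step 3: split boxes and transfer the isoperimetry.} Classify a good box $Q\in\mathcal{B}(A)$ as \emph{full} if $A$ contains at least half of the crossing cluster of $Q$, and \emph{partial} otherwise. There are two cases.
\begin{itemize}
\item \emph{Partial boxes carry a positive fraction of the volume.} Inside a partial box the crossing cluster satisfies a local isoperimetric inequality at scale $K$. This holds for supercritical clusters in boxes of polylogarithmic size with stretched-exponential probability (a Benjamini--Mossel / Mathieu--Remy type estimate, via the nearest-neighbour-dominated structure). It yields at least a constant number of boundary edges per partial box. Hence $|E(A,A^c)|\gtrsim|\mathcal{B}_{\mathrm{partial}}|\gtrsim |E(A,\Lambda_n)|/(K^d\log^2 n)$. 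This dominates $|E(A,\Lambda_n)|^{(d-1)/d}$ only when $|E(A,\Lambda_n)|$ is a large power of $\log n$, and $n^\eta$ suffices for that.
\item \emph{Full boxes carry most of the volume.} The set of full boxes $F$ is a subset of the coarse box $[n/K]^d$ with $|F|\le\frac12(n/K)^d(1+o(1))$. The balance condition follows from $|E(A,\Lambda_n)|\le|E(\mathcal{C}_1,\mathcal{C}_1)|/2$ together with the density estimates. The discrete isoperimetric inequality on the box then gives $|\partial F|\gtrsim|F|^{(d-1)/d}$. Every coarse boundary face between a full box and a non-full good box yields at least one edge of $E(A,A^c)$, by the crossing-connection property. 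Faces adjacent to bad clusters are charged to a nearby good box, losing only a constant factor. This gives $|E(A,A^c)|\gtrsim (|E(A,\Lambda_n)|/K^d)^{(d-1)/d}$.
\end{itemize}

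The polylogarithmic losses from $K$ and the degree bound do not match the statement's constant $c_{\mathrm{low}}$. We expect to remove them in one of two ways. One is to take $K$ a large constant and run a multiscale argument, so that boxes are good with probability close to $1$ and the bad clusters are handled by Peierls counting over lattice animals, which is where $\exp(-c\log^2 n)$ comes from. The other is to bound the number of long edges from $A$ directly: a Peierls-type union bound over connected $A$ of given volume $m\ge n^\eta$ shows that edges longer than $K$ contribute at most $\epsilon m$ to the volume.

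\textbf{Main obstacle.} The main obstacle is this uniform control over all connected $A$ simultaneously. Long edges can make $A$ geometrically spread out, so a coarse footprint may be non-connected while $A$ is connected. The union bound over connected subsets of a graph of degree up to $\log^2 n$ must beat the entropy $e^{O(m\log\log n)}$. We plan to use the condition $s>\min\{d+2,2d\}$ here: it gives a finite second moment of edge length (when $s>d+2$), or summability of $|x|^{d-s}$ against volume (when $s>2d$). Either way, the long edges of a connected set of volume $m$ cover total length $O(m)$ with probability $1-\exp(-cm)$. That keeps the footprint comparable to a lattice-connected set, up to a constant factor, which the discrete isoperimetry tolerates.
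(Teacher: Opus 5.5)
There is a genuine gap, and it is the one you flag yourself. As executed, your argument only gives $|E(A,A^c)|\gtrsim (|E(A,\Lambda_n)|/K^d)^{(d-1)/d}$ with $K$ polylogarithmic: in the full-box case you get one boundary edge per coarse face of boxes of side $K$, which loses a factor $K^{d-1}$. Neither of your repairs is carried out, and the second does not close as stated. An estimate failing with probability $e^{-cm}$, $c$ fixed, cannot be union-bounded against the $e^{O(m\log\log n)}$ connected sets you count. Letting $c$ grow, for instance by taking $\mu\asymp\log\log n$ in Lemma~\ref{lemma:VolLargeSets}, reintroduces a $\log\log n$ loss. The missing input is an isoperimetric inequality at a \emph{constant} scale that holds simultaneously for all large sets. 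The paper obtains this in Proposition~\ref{prop:IsoTruncated} for the $N$-truncated model: tiles of constant side $\rho=mN$, Liggett--Schonmann--Stacey domination by supercritical site percolation, and Pete's isoperimetric profile for that giant. Polylogarithmic boxes appear only for sets occupying less than half of many boxes, where a boundary bound $|E(A,\Lambda_n)|\log^{-C}(n)$ is affordable. Separately, your threshold ``full $=$ at least half of the crossing cluster'' does not give $|F|\le\frac12(n/K)^d(1+o(1))$, since $A$ could hold 60\% of every crossing cluster and make every box full.

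The second, more structural gap concerns vertices of $\mathcal{C}^{(n)}_1$ that lie in a good box but outside its crossing cluster, and are attached to the giant only through long edges. Since nearest-neighbour edges are not assumed, polynomially many vertices of $\mathcal{C}^{(n)}_1$ have only edges longer than $K$. For $s\in(d+2,2d)$ essentially every $K$-box contains such vertices, so goodness cannot exclude them. Your partial-box step claims that every box meeting $A$ without being full contains an edge of $E(A,A^c)$. This fails when $A\cap Q$ consists only of such vertices whose neighbours all lie in $A$, possibly far away; the local isoperimetry of the crossing cluster of $Q$ says nothing there. So a set made of a region of the giant plus all small clusters hooked onto it, or mostly of small clusters chained by long edges, could a priori have edge-volume a polylogarithmic factor larger than its boundary controls. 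This is the scenario of the first figure in Section~\ref{sect:UB}. Your Step~2 does not address it: the issue is not the number or length of long edges, but the mass of the clusters reached through them and whether they produce boundary.

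The paper handles this with the allocation theorem, Theorem~\ref{theo:Partition}. Using a multiscale revealment of edges and a subcritical branching comparison, it assigns every vertex of $\mathcal{C}^{(n)}_1\setminus\mathcal{C}^{(n),N}_1$ to a box of side $M\ge n^{\varepsilon}$. Each vertex is within polylogarithmic graph distance of that box's truncated giant, and each box receives only $O(M^d)$ edge-volume. The red/blue-edge argument then uses boxes of side $n^{\eta'}$ with $\eta'<\eta/d^2$. It either yields at least $c|E(A,\Lambda_n)|n^{-d\eta'}$ boundary edges, which beats $|E(A,\Lambda_n)|^{(d-1)/d}$ because $|E(A,\Lambda_n)|\ge n^{\eta}$, or it reduces to Proposition~\ref{prop:IsoTruncated}. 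Without an analogue of this allocation step, your scheme does not reach a constant $c_{\mathrm{low}}$.
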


\begin{obs}
    By optimising our arguments, one can show that Theorem \ref{thm:LBIsoGen} is valid for $A \subseteq \mathcal{C}^{(n)}_1$ satisfying $ \log^\Delta(n) \le |E(A, \Lambda_n)| \le |E(\mathcal{C}^{(n)}_1, \mathcal{C}^{(n)}_1)|/2$, for some $\Delta>0$ large enough. Since we cannot obtain the exact value of $\Delta$, and the polynomial bound is enough for our purpose, we did not pursue this. Determining the sharp $\Delta>0$ (and the estimates required to identify it) might be an interesting direction for future research.
\end{obs}

\subsection{Overview of the proof and related work}
Whilst our results perhaps are not surprising by the similarity of the regime $s > d+2$ with Bernoulli percolation, the proof techniques are considerably different. In particular, most of the difficulty in the proofs arises from the fact that we do \emph{not} insist on having nearest-neighbor edges being present: for $d=1$ we have to assume this in order to ensure supercriticality, and the proof is much simpler, as we show at the end of this paper. Assuming the presence of nearest neighbour edges in higher dimensions would also significantly simplify the argument also in higher dimensions, in particular we would not require the analysis of Section \ref{sect:Prep}.

The lower bound is comparably simpler and based on a reversibility trick of Kesten, which ensures that the random walk is not faster than diffusive. On the other hand, the upper bound requires several steps: we first use a truncated supercritical model as a building block. The analyisis of the truncated model is similar to Bernoulli percolation in \cite{PeteIso} or the recent works on random geometric graphs \cite{MitscheMartinezKiwi, HaalandSarkovic}. The crucial input from \cite{BergerLRP, BaumlerShapeTheo} is the possibility to truncate the LRP, while still having supercritical percolation. Then, inspired by Crawford-Sly \cite{CrawfordSly1}, the crucial part of our proof is an allocation theorem. This result allows us to allocate small clusters that are subcritical in the truncated model, but are attached to the giant component in the full model. This step is required to ensure that no additional polylogarithmic factors appear when computing the edge-volume of connected sets. These preparations are then used when showing the isoperimetric inequalities.\\

\noindent \textbf{Notation.}
For $n \in \mathbb{N}$ and $y \in \mathbb{Z}^d$, we define
\begin{equation*}
    \Lambda_n(y) \coloneqq \left\{x \in \mathbb{Z}^d \colon \|x - y\|_\infty \le \tfrac{n}{2}\right\} = y + \Lambda_n.
\end{equation*}
Note that $\Lambda_n(0)=\Lambda_n$.
%Observe that $\Lambda_n = \Lambda_n(0) = [-n, n]^d \cap \mathbb{Z}^d$. 
For a graph $G = (V, E)$, let us write $d_{G}(x, y)$ for the graph (or chemical) distance between a pair of vertices $x, y \in V$. We use the convention that $d_{G}(x, y) = +\infty$ if $x, y$ are not connected by a path of open edges. We will denote by $\mathcal{C}_\infty$ the almost surely unique infinite connected component on $\mathbb{Z}^d$. 
For a set $A \subseteq \mathbb{Z}^d$ we denote
\begin{equation}\label{eqn:HittingTime}
    \tau_A \coloneqq \inf\{s \ge 0 \colon X_s \in A\}.
\end{equation}
Furthermore, for $n \in \mathbb{N}$, considering the random walk $X_t$ on $\mathcal{C}_\infty$, we set 
    \begin{equation}\label{eqn:ExitCTRW}
        \tau_{n} \coloneqq \inf\left\{s \ge 0 \colon X_s \in \Lambda_n^c\right\},
    \end{equation}
and similarly
\begin{equation}\label{eqn:ExitDTRW}
    \widetilde{\tau}_{n} \coloneqq \inf\left\{k \ge 0 \colon \widetilde{X}_k \in \Lambda_n^c\right\}.
\end{equation}
We will denote by 
\begin{equation*}
    \mathcal{C}^{(n)}_j \coloneqq \left\{\text{the }j\text{-th largest connected component inside }\Lambda_n\right\}.
\end{equation*}
Let $\mathcal{C}_\infty|_{\Lambda_n}$ be the largest component of $\mathcal{C}_\infty$ connected inside the box $\Lambda_n$, with some deterministic rule to break ties. We remark that, in general, $\mathcal{C}_\infty|_{\Lambda_n} \neq \mathcal{C}^{(n)}_1$, but we will show  that in fact $\mathcal{C}_\infty|_{\Lambda_n} \supseteq \mathcal{C}^{(n)}_1$ with suitably high probability. Furthermore, for all $k \in \mathbb{N}$, we let $\mathcal{C}^{(k)}_j(x)$ be the $j$-th largest component of LRP inside the box $\Lambda_k(x)$. Define in the analogous way $\mathcal{C}_\infty|_{\Lambda_k(x)}$.

For technical reasons, let us denote by $\mu$ the probability measure generating the percolation configuration and enhance the notation and write $\mathbf{P}^\mu$. The reader should always assume that $\mathbf{P} = \mathbf{P}^\mu$; the second version will be exclusively used when we wish to highlight the usage of $\mu$ among the other measures that we proceed to introduce now. Let us write $\mu_0(\cdot) \coloneqq \mu\left( \cdot \mid 0 \in\mathcal{C}_\infty \right)$. Let $\nu$ be the measure on environments such that for any measurable $A \subseteq \{0, 1\}^{\mathbb{Z}^d}$, 
\begin{equation}\label{eqn:DefStationaryMeasure}
    \mathbf{P}^{\nu}(A) = \frac{\mathbf{E}[\deg(0) \mathds{1}\{A\}]}{\mathbf{E}[\deg(0)]}.
\end{equation}
It is a well-known fact that the measure $\mathbf{P}^{\nu}(\cdot)$ is stationary and ergodic for the random walk \cite[pg.~449]{CrawfordSly2}. We will denote by $\mathbb{P}^\nu$ the annealed law of the walk under environments chosen according to $\nu$. We also introduce the measure $\nu_0(\cdot) =  \nu\left( \cdot \mid 0 \in\mathcal{C}_\infty \right)$.

We will say that a sequence of events $\{B_n\}_{n \ge 0}$ holds with high probability (w.h.p.), when $\mathbf{P}(B_n) \ge 1 - \varepsilon$ for any $\varepsilon>0$ and any $n$ large enough. 
We use standard asymptotic Landau notation throughout the paper. Finally, constants $c, c'$  change from line to line, whereas constants with subindices do not.\\ 

\noindent \textbf{Organisation.} In Section \ref{sect:Notation} we collect useful definitions and previous results on the model. Section \ref{sect:LB} is then dedicated to  the lower bound for the mixing time. Next, in Section \ref{sect:Trunc} we study the truncated percolation model, proving Proposition \ref{prop:IsoTruncated}. In Section \ref{sect:Prep} we show the main auxiliary result, Theorem \ref{theo:Partition}. In Section \ref{sect:UB} we conclude the proof of Theorem \ref{thm:LBIsoGen} and deduce Theorem \ref{theo:MixingDiffusive} for $d \ge 2$. Finally, in Section \ref{sect:D1} we show Theorem \ref{theo:MixingDiffusive} for $d=1$. We assume $d \ge 2$ and $s> d+2$ throughout the paper with the only exception of Section \ref{sect:D1}.

\section{Preliminary results}\label{sect:Notation}

\subsection{Tilings} Here, we set the notation for certain coverings of boxes by sub-boxes (alternatively called tiles). For $n>0$ fixed and $N_1 < n$, we let for $x \in \mathbb{Z}^d$
\begin{equation*}
    \{\Lambda_{N_1}\}_{\Lambda_{N_1} \in \mathcal{T}_{N_1}}
\end{equation*}
be a fixed minimal cover of $\Lambda_n$ by vertex-disjoint boxes of side length $N_1$. If $N_1$ is not a divisor of $n$ the minimal cover might not be unique, we choose one by some deterministic rule. We remark that the boxes $\Lambda_{N_1} \in \mathcal{T}_{N_1}$ are of the form $\Lambda_{N_1}(x)$ for some $x \in \mathbb{Z}^d$. We say that two boxes $\Lambda_{N_1}(xN_1)$ and $\Lambda_{N_1}(yN_1)$ are adjacent if $\|x-y\|_1\le 1$, and furthermore, we call them $*$-adjacent if $\|x-y\|_\infty\le 1$.

%\cs{Maybe here add overlapping tiling definition.}\dmc{At this point we don't need them yet, no?}
\subsection{The conductance method} \label{sect:Cond} For any $A \subseteq V$ we will write $\pi(A) = \sum_{v \in A} \pi(v)$. Moreover, for a pair of sets $A, B \subseteq V$ we denote $E(A, B)$ the set of edges with one endpoint in $A$ and one in $B$.

We quantify the edge boundary of a set $A$ through the quantity
\begin{equation*}
    Q(A, A^c) \coloneqq \frac{|E(A, A^c)|}{2 |E|}.
\end{equation*}
From this we define the conductance function, defined for $t \in (0, 1)$ as
\begin{equation*}
    \varphi(t) \coloneqq \inf_{\substack{A \subseteq V \\ 0 \le \pi(A) \le t}} \frac{Q(A, A^c)}{\pi(A) \pi(A^c)}.
\end{equation*}
This quantity is famously connected to the mixing time of the random walk on $G$ by the identity (this first appeared in \cite[Theorem 2.1]{ConductanceMethod} and was later corrected by Montenegro~\cite[Theorem 3.1]{montenegro2002faster}):
\begin{equation}\label{eqn:CondMeth}
    T_{\mathrm{mix}} \le \int_{\pi_0}^{1/2} \frac{1}{\varphi^2(t)t} dt + \frac{1}{\varphi(1/2)},
\end{equation}
with $\pi_0 = \min_{v \in V} \pi(v)$. Commonly, the second term on the right-hand-side is not relevant in proving the upper bound, and this will be indeed the case for us.

\subsection{Auxiliary lemmas}

The first result ensures that with high probability the largest component occupies a positive density of a large box. 

\begin{thm}\cite[Theorem~1.8, (28)]{BaumlerShapeTheo}\label{theo:Baumler}
For every $\varepsilon > 0$ there exists $n_0 > 0$ such that for all $n \ge n_0$
\begin{equation*}
    \mathbf{P}\left(|\mathcal{C}^{(n)}_1| \le (\theta(\beta, s) - \varepsilon) |\Lambda_n| \right) \le \varepsilon.
\end{equation*}
Moreover, there exists $\eta>0$ such that for $n$ large enough 
\begin{equation*}
    \mathbf{P}\left(\mathcal{C}_\infty \cap \Lambda_n  = \emptyset \right) \le e^{-n^{\eta}}.
\end{equation*}
\end{thm}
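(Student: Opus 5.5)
This statement is cited from \cite{BaumlerShapeTheo}; I sketch how I would prove it, taking from the literature the standard inputs on supercritical LRP for $s>d$, $\beta>\beta_c$ (uniqueness of $\mathcal{C}_\infty$, positivity of $\theta$). The idea is a renormalisation and truncation argument.

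\textbf{Step 1: truncation.} Use the result recalled in the overview (\cite{BergerLRP, BaumlerShapeTheo}) that for $\beta>\beta_c$ one can choose a finite range $K$ such that the model keeping only edges of length at most $K$ still percolates. One may need to replace $\beta$ by a slightly smaller value, or use a sprinkling decomposition $\omega=\omega_1\vee\omega_2$ with $\omega_2$ independent and of small intensity.

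\textbf{Step 2: finite-range renormalisation.} For the finite-range model, apply a static block renormalisation in the spirit of Grimmett--Marstrand. Call a block of side $N$ good if it contains a crossing cluster, and if any two such clusters in adjacent good blocks are joined inside their union. For $N$ large, goodness stochastically dominates a highly supercritical Bernoulli site percolation. Peierls-type estimates then give the following:
\begin{itemize}
\item the good blocks inside $\Lambda_n$ form a connected cluster covering all but an $\varepsilon$-fraction of the blocks, with probability $1-o(1)$;
\item every block is connected to this cluster except with probability $e^{-cn^{\eta}}$, since the dual obstruction surrounding a block must have size of order $n/N$.
\end{itemize}
This already yields the second claim: $\Lambda_n$ meets $\mathcal{C}_\infty$ unless some local event of stretched-exponentially small probability occurs. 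The same bound follows even more directly from the i.i.d.\ blocks, using that the probability that none of roughly $n^{d}/N^d$ disjoint sub-boxes meets the infinite cluster is exponentially small once local connections are glued.

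\textbf{Step 3: density of the giant.} For the first claim, use ergodicity: $\frac{1}{|\Lambda_n|}\sum_{x\in\Lambda_n}\mathds{1}\{x\in\mathcal{C}_\infty\}\to\theta$ almost surely. It then suffices to show that, with high probability, all but $\varepsilon|\Lambda_n|$ of the vertices of $\mathcal{C}_\infty\cap\Lambda_n$ lie in a single cluster connected inside $\Lambda_n$.
\begin{itemize}
\item Vertices of $\mathcal{C}_\infty$ in good blocks whose local cluster has diameter at least $M$ are, with high probability, part of the crossing clusters, and hence of the giant.
\item Vertices of $\mathcal{C}_\infty$ that reach infinity only via long edges, or only by leaving $\Lambda_n$, must still connect to distance $M$. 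By a union bound and monotone convergence in $M$, they have density at most $\theta-\theta_M+o(1)$, where $\theta_M=\mathbf{P}(0\leftrightarrow\partial\Lambda_M(0))$ computed within $\Lambda_M$.
\item Vertices near $\partial\Lambda_n$ contribute only $o(n^d)$.
\end{itemize}

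\textbf{Main obstacle.} The hardest step is showing that large local clusters, including those built using long edges, merge into the crossing cluster inside $\Lambda_n$. I would handle this by sprinkling. Each such cluster of size at least $M$ has many vertices adjacent to the nearby good crossing cluster, and the independent $\omega_2$-edges connect it with probability $1-e^{-cM}$.
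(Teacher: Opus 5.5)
The paper contains no proof of this statement; it quotes it from \cite[Theorem~1.8, (28)]{BaumlerShapeTheo}. Your sketch therefore has to stand on its own, and it has genuine gaps.

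The main gap is in Step~2. You need blocks to be good with probability tending to $1$, since that is what the domination of \cite{liggettdomination} requires. This does not follow from the fact, established in Step~1, that the truncated finite-range model percolates. Already for nearest-neighbour Bernoulli percolation this implication is the Grimmett--Marstrand theorem. For the truncated long-range kernel you would need a Grimmett--Marstrand/slab-type theorem of its own, including uniqueness of crossing clusters. You also use that uniqueness in the first bullet of Step~3, although it is not part of your definition of ``good''. This is exactly the kind of input the cited work has to supply, and it is not among the inputs you declare. You cannot borrow it from this paper either: Proposition~\ref{prop:TassellationSize} derives good tiles \emph{from} \cite[Theorem~1.8]{BaumlerShapeTheo} together with \cite{EasoHutchcroft}, which is the opposite logical direction to yours.

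Step~3 does not work as written. Its second bullet controls the wrong set. A difference between $\theta$ and $\theta_M$ only accounts for vertices that reach distance $M$ without reaching infinity. The vertices you must control lie in $\mathcal{C}_\infty$ and do reach distance $M$. For fixed truncation range, those attached to $\mathcal{C}_\infty$ only through long edges have positive density, so they must be shown to lie in $\mathcal{C}^{(n)}_1$, and everything rests on your sprinkling step. Two ingredients are missing there.
\begin{enumerate}
\item Sprinkling applies only to clusters of $\omega_1$, because an $\omega$-cluster has all its outgoing $\omega_2$-edges closed. Since $\omega_1$ lives at reduced intensity $\beta'<\beta$, the density you get is $\theta_M(\beta')\ge\theta(\beta')$. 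Reaching $\theta(\beta)-\varepsilon$ therefore needs left-continuity of $\theta$ in the supercritical phase. This is true, via uniqueness of $\mathcal{C}_\infty$ and the van den Berg--Keane argument, but you must invoke it.
\item A locally large $\omega_1$-cluster need not have any vertex adjacent to a crossing cluster. Sprinkling only gives an $\omega_2$-edge to a target vertex at distance $r$ with probability of order $\delta r^{-s}$. You therefore need a quantitative statement that a positive fraction of the cluster's vertices lie within bounded distance of crossing clusters belonging to the block-giant of $\Lambda_n$. That requires controlling bad regions around the cluster.
\end{enumerate}

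Your justification of the second claim is misstated. A fixed block is bad with probability bounded below by a positive constant, so ``every block is connected to this cluster except with probability $e^{-cn^{\eta}}$'' is false. What a Peierls argument does give is that the renormalised box $\Lambda_n/N$ misses the infinite cluster of the dominating site percolation with probability at most $e^{-c(n/N)^{d-1}}$. Gluing then puts a vertex of $\Lambda_n$ in $\mathcal{C}_\infty$.

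The ``i.i.d.\ blocks'' shortcut is invalid. The events $\{B_i\cap\mathcal{C}_\infty=\emptyset\}$ are not local, and Harris--FKG correlates them positively, which is the wrong direction for an upper bound. A product over $\asymp (n/N)^d$ blocks would even give a volume-order bound. That is false, since isolating $\Lambda_n$ from its complement has probability $e^{-o(n^d)}$.
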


\begin{lem}\label{lemma:SecondLargest}
    There exists $\zeta>0$ such that, $\mathbf{P}$-almost surely for all $n$ large enough 
    \begin{equation*}
        |\mathcal{C}_2^{(n)}| \le \log(n)^\zeta.
    \end{equation*}
    As a consequence, for any $\varepsilon>0$, we obtain that for all $m$ large enough and all $x \in \mathbb{Z}^d$,
    \begin{equation*}
        \mathbf{P}\left(\mathcal{C}^{(m)}_1(x) \not \subseteq \mathcal{C}_\infty|_{\Lambda_m(x)} \right) \le \varepsilon.
    \end{equation*}
\end{lem}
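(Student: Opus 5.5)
The plan is to derive the bound on $|\mathcal{C}^{(n)}_2|$ from known results on the second largest cluster of long-range percolation in boxes, and then to deduce the inclusion $\mathcal{C}^{(m)}_1(x)\subseteq\mathcal{C}_\infty|_{\Lambda_m(x)}$ as a corollary using Theorem \ref{theo:Baumler}.

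For the first claim I would quote, or re-derive, a polylogarithmic bound on the second largest cluster. A standard renormalisation argument, available in \cite{BergerLRP} and \cite{CrawfordSly1}, gives a summable tail. The first ingredient is a truncated supercritical model whose good boxes at scale $K=\log(n)^{\zeta'}$ percolate strongly. The second is the fact that any connected set of size at least $\log(n)^{\zeta}$ inside $\Lambda_n$ must touch at least one good box whose local giant is linked to the global giant. Together these give
\begin{equation*}
\mathbf{P}\big(|\mathcal{C}^{(n)}_2|>\log(n)^\zeta\big)\le n^{-2}
\end{equation*}
for $\zeta$ large. The key estimate is a union bound over the at most $n^d$ choices of starting vertex. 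For a fixed vertex, the probability that its cluster has size at least $\log(n)^\zeta$ but is not connected to the giant decays like $\exp(-c\log(n)^{\zeta\delta})$. This follows from a Peierls-type count over lattice animals of $K$-boxes surrounding the cluster, each of which is bad independently with probability close to $0$. Borel--Cantelli along all $n$ then gives the almost sure statement, and monotonicity between consecutive $n$ is not even needed since the bound is summable in $n$. The main obstacle is exactly this step: controlling the long edges so that a medium-size cluster cannot avoid the giant. I would handle it by noting that, for $s>d$, a cluster of size $L$ sees on the order of $L\cdot\sum_{\text{giant}}\|x-y\|^{-s}$ potential edges to a density-$\theta$ giant within distance $L^{1/d}$. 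The expected number of such edges is therefore at least of order $L^{2-s/d}$ when the cluster is spread out; when the cluster is compact, its nearest-range edges to the giant are used instead. In either case one obtains a stretched-exponential probability that no edge is present, which after the union bound yields polylog.

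For the consequence, fix $x$ and use translation invariance to set $x=0$. By Theorem \ref{theo:Baumler}, with probability at least $1-\varepsilon/3$ we have $|\mathcal{C}^{(m)}_1|\ge(\theta-\varepsilon')m^d$, and with probability at least $1-e^{-m^\eta}$ the set $\mathcal{C}_\infty\cap\Lambda_{m/2}$ is nonempty. I would then show that $\mathcal{C}_\infty\cap\Lambda_m$ contains a component connected inside $\Lambda_m$ of size larger than $\log(m)^\zeta$. For this I apply the first part inside $\Lambda_m$, together with the ergodic density $\theta$ of $\mathcal{C}_\infty$: by the ergodic theorem $|\mathcal{C}_\infty\cap\Lambda_m|\approx\theta m^d$ w.h.p. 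Since every component inside $\Lambda_m$ other than $\mathcal{C}^{(m)}_1$ has size at most $\log(m)^\zeta$, only $\mathcal{C}^{(m)}_1$ can be large. Hence the points of $\mathcal{C}_\infty\cap\Lambda_m$ outside $\mathcal{C}^{(m)}_1$ lie in small boxed components. If $\mathcal{C}^{(m)}_1\not\subseteq\mathcal{C}_\infty$, then $\mathcal{C}^{(m)}_1$ and $\mathcal{C}_\infty\cap\Lambda_m$ would be disjoint, giving total size about $2\theta m^d$ of vertices covered either by the giant or by $\mathcal{C}_\infty$-pieces. I would rule this out by showing that the $\mathcal{C}_\infty$-pieces of size at most $\log(m)^\zeta$ inside $\Lambda_m$ have total density $o(1)$. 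Their density is bounded by $\mathbf{P}(0\in\mathcal{C}_\infty,\ |\text{cluster of }0\text{ in }\Lambda_m|\le\log(m)^\zeta)$, which tends to $0$ away from the boundary. Thus $\mathcal{C}^{(m)}_1$ intersects $\mathcal{C}_\infty$, so it is contained in $\mathcal{C}_\infty$ and, being the largest component inside the box, equals $\mathcal{C}_\infty|_{\Lambda_m}$. Choosing $m$ large makes the total failure probability at most $\varepsilon$.
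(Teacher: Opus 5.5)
Both parts have a genuine gap. In the first part the step you flag as the main obstacle is not actually overcome. In the second part the density step is circular.

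\textbf{First part.} For $d\ge 2$, Berger's renormalisation and the Crawford--Sly partition theorem only treat $s<2d$. Quoting them therefore cannot cover $s\ge 2d$, which for $d=2$ is the entire regime $s>4$ of the paper. Your re-derivation breaks exactly at the long-edge step. The lower bound $L^{2-s/d}$ on the expected number of edges to the giant is $O(1)$ once $s\ge 2d$, so it gives no decay. The computation also treats the giant as a fixed density-$\theta$ set, whereas the event that a cluster avoids the giant is correlated with the shape of the giant around it. A Peierls count of bad $K$-boxes surrounding the cluster is not available either, since long edges jump over any shell.

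The paper gets the first part from Theorem~\ref{theo:Partition}, whose key idea is a staged exposure. First reveal only a sub-$\sigma$-field (edges inside the boxes of side $M_4=\lfloor\log^{\zeta_1}(n)\rfloor$, or the $N$-truncated edges) and take the box giants as a core. By Proposition~\ref{prop:Volume} the core has density at least $\rho_1$ in every such box with probability $1-e^{-c\log^2 n}$. Edges from a vertex to core vertices in neighbouring boxes are still unexposed and independent of the core. So each vertex has probability at least $cM_4^{d-s}$ of a direct edge to the core, and exploring a cluster of size $L$ that never meets the core costs $e^{-cLM_4^{d-s}}$. A union bound then gives size $O(M_4^{s-d}\log^2 n)$ for every $s>d$, with no compact/spread-out dichotomy, and Borel--Cantelli finishes as you planned.

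\textbf{Second part.} You need the points of $\mathcal{C}_\infty\cap\Lambda_m$ lying in box-components of size at most $\log(m)^\zeta$ to have density $o(1)$. You justify this by $\mathbf{P}(0\in\mathcal{C}_\infty,\ |\mathcal{C}_{\Lambda_r}(0)|\le\log(m)^\zeta)\to 0$, where $\mathcal{C}_{\Lambda_r}(0)$ is the component of $0$ using edges inside $\Lambda_r$ and $r$ is of order $m$.
\begin{itemize}
\item Monotone convergence $\mathcal{C}_{\Lambda_r}(0)\uparrow\mathcal{C}(0)$ only handles a fixed threshold $K_0$.
\item For the window $K_0<|\mathcal{C}_{\Lambda_r}(0)|\le\log(m)^\zeta$ you would need a bound on medium-sized box clusters that is uniform in the scale. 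The first part does not give this, since it only controls sizes above $\log(m)^\zeta$.
\item Rewriting the event as $\{0\in\mathcal{C}_\infty\setminus\mathcal{C}^{(r)}_1\}$ presupposes $\mathcal{C}^{(r)}_1\subseteq\mathcal{C}_\infty$, which is the claim itself.
\end{itemize}

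The paper's one-line argument, comparing $\theta m^d/2$ with $\theta n^{d/2}/2$, avoids densities by passing to larger boxes. Take $n_0=m$ and $n_{k+1}=n_k^2$. With probability tending to $1$ as $m\to\infty$, the first part holds for all $n\ge m$, and $|\mathcal{C}^{(n_k)}_1|\ge\eta n_k^d$ for all $k$ (the failure probabilities are summable by Proposition~\ref{prop:Volume}). Then $\mathcal{C}^{(n_k)}_1$ is a connected subset of $\Lambda_{n_{k+1}}$ of size exceeding $\log(n_{k+1})^\zeta\ge|\mathcal{C}^{(n_{k+1})}_2|$, so $\mathcal{C}^{(n_k)}_1\subseteq\mathcal{C}^{(n_{k+1})}_1$. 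The increasing union is an infinite connected set, hence $\mathcal{C}^{(m)}_1\subseteq\mathcal{C}_\infty$ by uniqueness of the infinite cluster. Translation invariance handles general $x$.
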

\begin{proof}
    The first result will be proved separately for the two regimes $s \in (d, d+2)$ and $s \ge d+2$, for $d \ge 2$ (see Theorem \ref{theo:Partition}). 
    
    The second result follows from the first and Theorem~\ref{theo:Baumler}. Indeed, if $\mathcal{C}^{(m)}_1$ is larger than $\theta(\beta, s)m^d/2 \ge \theta(\beta, s)n^{d/2}/2$, then $\mathcal{C}^{(m)}_1 = \mathcal{C}_\infty|_{\Lambda_m(x)}$ by the first part of the lemma.
\end{proof}
We observe that Lemma~\ref{lemma:SecondLargest} implies that for all $n$ large enough
\begin{equation*}
    \mathbf{P}\left(0 \in \mathcal{C}^{(n)}_1 \right) \ge \frac{\theta(\beta, s)}{2}.
\end{equation*}
We quote the following result from \cite{CrawfordSly1}.
\begin{lem}\cite[Lemma~2.10]{CrawfordSly1}\label{lem:MaxDeg}
    For every $\delta>0$, there exists $C=C(\delta)>0$ such that for all $b>0$ and $n \ge 1$,
    \begin{equation*}
        \mathbf{P}\left( \max_{x \in \Lambda_n} \mathrm{deg}^\omega(x) > b \log(n)\right)\le Cn^d e^{-\delta b \log(n)}.
    \end{equation*}
    This immediately implies that there exists $U > 0$ such that $\mathbf{P}$-almost surely for all $n \ge 0$ large enough, $\max_{x \in \Lambda_n} \mathrm{deg}^\omega(x) \le U \log(n)$.
\end{lem}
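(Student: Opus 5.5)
The last statement is Lemma \ref{lem:MaxDeg}, quoted from \cite{CrawfordSly1}. I would prove it by a union bound over the box, combined with an exponential Chernoff bound for the degree of a single vertex, and then a Borel--Cantelli argument for the almost sure statement.

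\textbf{Step 1: exponential moment of one degree.} Fix $x \in \Lambda_n$. The degree $\mathrm{deg}^\omega(x)=\sum_{z}\omega(x,z)$ is a sum of independent Bernoulli variables. Their means satisfy $\sum_z \mathbf{p}(x,z)\le \sum_z \min(1, C\beta\|x-z\|^{-s}) =: M<\infty$, since $s>d$, and $M$ does not depend on $x$. For any $\lambda>0$, using $1+p(e^\lambda-1)\le e^{p(e^\lambda-1)}$, we get
\begin{equation*}
\mathbf{E}\big[e^{\lambda \mathrm{deg}^\omega(x)}\big]=\prod_z \big(1+\mathbf{p}(x,z)(e^\lambda-1)\big)\le \exp\{M(e^\lambda-1)\}.
\end{equation*}
Markov's inequality then gives $\mathbf{P}(\mathrm{deg}^\omega(x)>b\log n)\le \exp\{M(e^\lambda-1)\}\, n^{-\lambda b}$.

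\textbf{Step 2: choice of constants and union bound.} Given $\delta>0$, choose $\lambda=\delta$ and set $C(\delta)=|\Lambda_1|^{\,}\cdot\exp\{M(e^\delta-1)\}$. Here the factor in front only accounts for $|\Lambda_n|\le (n+1)^d\le 2^d n^d$, so one takes $C=2^d e^{M(e^\delta-1)}$. A union bound over the at most $2^dn^d$ vertices of $\Lambda_n$ yields
\begin{equation*}
\mathbf{P}\Big(\max_{x\in\Lambda_n}\mathrm{deg}^\omega(x)>b\log n\Big)\le C n^d e^{-\delta b\log n},
\end{equation*}
uniformly in $b>0$ and $n\ge1$. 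Step 2 is then finished.

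\textbf{Step 3: almost sure statement.} Take $\delta=1$ and $b=U$ with $U>d+2$. The bound becomes $Cn^{d-U}$, which is summable in $n$. By Borel--Cantelli, almost surely only finitely many $n$ violate $\max_{x\in\Lambda_n}\mathrm{deg}^\omega(x)\le U\log n$.

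There is no real obstacle here. The only point needing care is that $M$ is finite and uniform in $x$. This follows from $\mathbf{p}(x,y)\sim\beta\|x-y\|^{-s}$ together with $\mathbf{p}\le 1$, because the resulting bound depends only on $x-z$ and the sum converges since $s>d$.
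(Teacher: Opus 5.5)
Your proof is correct and takes the standard route: the paper gives no proof of this lemma and only cites Crawford--Sly, and your argument (an exponential Chernoff bound for a sum of independent Bernoulli variables with summable means, valid for every $\delta>0$, then a union bound over the at most $2^d n^d$ sites of $\Lambda_n$, then Borel--Cantelli, for which any $U>d+1$ suffices) is the usual way to establish it. The only modelling input is the uniform bound $\sup_x\sum_z \mathbf{p}(x,z)\le M<\infty$, which, as you note, uses translation invariance (or a uniform bound $\mathbf{p}(x,y)\le C\|x-y\|^{-s}$); the stray expression with $|\Lambda_1|$ in Step 2 is wrong as written and should be deleted in favour of $C=2^d e^{M(e^\delta-1)}$.
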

The next lemma guarantees that with exponentially large probability large sets do not have excessively high degree. 
\begin{lem}\label{lemma:VolLargeSets}
    Let $s>d$. For all $\mu>0$ large enough, there exists a constant $c(\mu)>0$ such that for any set of vertices $A \subseteq \Lambda_n$ we have
    \begin{equation*}
        \mathbf{P}\left(\sum_{x \in A} \mathrm{deg}^\omega(x) \ge \mu |A|\right) \le e^{- c(\mu) |A|}.
    \end{equation*}
    Furthermore, we have that $c(\mu) \to \infty$ as $\mu \to \infty$. 
\end{lem}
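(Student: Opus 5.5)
The plan is a Chernoff bound on $\sum_{x\in A}\mathrm{deg}^\omega(x)$ via exponential moments, after rewriting the sum as a sum of independent edge variables. Write
\begin{equation*}
    \sum_{x\in A}\mathrm{deg}^\omega(x) = \sum_{x \in A}\sum_{z\in\mathbb{Z}^d}\omega(x,z) = 2\sum_{\{x,z\}\subseteq A}\omega(x,z) + \sum_{x\in A, z\notin A}\omega(x,z),
\end{equation*}
so that each unordered edge appears at most twice. This bounds the sum by $2S$, where $S\coloneqq\sum_{e}\omega(e)$ runs over the distinct edges with at least one endpoint in $A$. The variables $\omega(e)$ are independent Bernoulli with parameters $\mathbf{p}(e)$. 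Moreover, since $s>d$ and $\mathbf{p}(x,y)\sim\beta\|x-y\|^{-s}$, we have $K\coloneqq\sup_x\sum_{z}\mathbf{p}(x,z)<\infty$, and hence $\sum_e \mathbf{p}(e)\le K|A|$.

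Next I bound the exponential moment. For $\lambda>0$, independence and $1+u\le e^u$ give
\begin{equation*}
    \mathbf{E}\bigl[e^{\lambda S}\bigr]=\prod_e\bigl(1+\mathbf{p}(e)(e^\lambda-1)\bigr)\le \exp\bigl\{(e^\lambda-1)K|A|\bigr\}.
\end{equation*}
Markov's inequality then yields
\begin{equation*}
    \mathbf{P}\Bigl(\sum_{x\in A}\mathrm{deg}^\omega(x)\ge\mu|A|\Bigr)\le \mathbf{P}(S\ge \mu|A|/2)\le \exp\bigl\{-|A|\bigl(\lambda\mu/2-(e^\lambda-1)K\bigr)\bigr\}.
\end{equation*}
I would choose $\lambda=\log(\mu/(2K))$, which is positive once $\mu>2K$; this is where "$\mu$ large enough" enters. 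This choice gives
\begin{equation*}
    c(\mu)=\frac{\mu}{2}\log\frac{\mu}{2K}-\frac{\mu}{2}+K,
\end{equation*}
which is positive for $\mu$ large and tends to $\infty$ like $\mu\log\mu$. The constant is uniform in $A$ and $n$; in fact the bound never uses $A\subseteq\Lambda_n$.

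I do not expect a real obstacle. The only point needing care is the double counting of edges with both endpoints in $A$, which would break independence if the sum were treated naively; replacing the sum by $2S$ handles it, at the cost of the factor $2$ in the threshold. The finiteness of $K$ relies only on the summability of $\|x\|^{-s}$ for $s>d$.
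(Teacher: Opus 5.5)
Your proof is correct and follows essentially the same route as the paper. Both arguments bound $\sum_{x\in A}\mathrm{deg}^\omega(x)$ by twice the number of open edges with an endpoint in $A$, which is a sum of independent Bernoulli variables with mean at most $\bar d|A|$, and then apply exponential concentration. The only difference is the final inequality. The paper invokes Bernstein's inequality, which gives $c(\mu)$ of order $\mu$. Your direct moment generating function computation gives the sharper Poisson-type rate $c(\mu)=\frac{\mu}{2}\log\frac{\mu}{2K}-\frac{\mu}{2}+K\asymp\mu\log\mu$, and it also covers degrees counted in all of $\mathbb{Z}^d$ rather than only inside $\Lambda_n$.
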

\begin{proof}
Let $\tilde{E} \coloneqq A \times \Lambda_n$ be the set of potential edges with at least one endpoint inside $A$. We observe that
\begin{equation*}
    \sum_{x \in A} \mathrm{deg}^\omega(x) \le  2 \sum_{e \in \tilde{E}} \mathds{1}\{\omega_e = 1\}.
\end{equation*}
We observe that the degree of a vertex has exponential moments (in the infinite space), hence in particular we may denote the expected degree by $0 < \bar{d} < \infty$ (in the infinite space). Clearly, for all $n$ large enough we have that
\begin{equation*}
    \mathbf{E}\left[\sum_{e \in \tilde{E}} \mathds{1}\{\omega_e = 1\}\right] \le \bar{d}  |A|.
\end{equation*}
Moreover, from the basic fact that for a Bernoulli random variable the variance is less than the mean, 
\begin{equation*}
    \mathbf{E}\left[\sum_{e \in \tilde{E}}\left( \mathds{1}\{\omega(e) = 1\} - \mathbf{p}(e) \right)^2\right] \le \bar{d} |A|.
\end{equation*}
Then, by Bernstein's inequality (see e.g.\ \cite[Theorem~2.9.1]{HighDim}),
\begin{align*}
    \mathbf{P}\left(\sum_{x \in A} \mathrm{deg}^\omega(x) \ge \mu |A|\right) &\le \mathbf{P}\left( 2\sum_{e \in \tilde{E}} \left(\mathds{1}\{\omega_e = 1\} - \mathbf{p}(e)\right) \ge \left(\mu - \bar{d}\right) |A|\right)\\
    & \le \exp\left( - c \frac{\left(\mu - \bar{d}\right)^2 |A|^2}{\bar{d} |A| +\left(\mu - \bar{d}\right) |A|/3}\right),
\end{align*}
yielding the result for all $\mu> \bar{d}$ large enough. The second part of the lemma follows directly from this bound.
\end{proof}

\section{The lower bound on the mixing time}\label{sect:LB}

To prove the lower bound on the mixing time we will mostly work with the embedded walk $\{\widetilde{X}_k\}_{k \ge 0}$. We will show that it takes it an amount of time of order $n^2$ to exit $\Lambda_{\delta n}$ (for $\delta>0$ small), and hence, the walk cannot be mixed by that time by~\cite[Theorem~1.1]{PeresSousi}. Translating the result to the CTRW $\{X_t\}_{t \ge 0}$ is then routine using \cite[Theorem~2]{Oliveira} which states a result akin to \cite[Theorem~1.1]{PeresSousi} for continuous time random walks. Observe that this is only marginally stronger to proving \emph{tightness} for the random walk paths with diffusive scaling.

We highlight to the reader's benefit that, in this section, we will work with the random walk $\{\widetilde{X}_k\}_{k \ge 0}$ both on the infinite component $\mathcal{C}_\infty$ and on $\mathcal{C}_1^{(n)}$. With slight abuse of notation, we will maintain the same letter for both random walks. The distinction will be made case by case by the conditioning either on the event $\{0 \in \mathcal{C}_\infty\}$ or $\{0 \in \mathcal{C}_1^{(n)}\}$, and a coupling between these two random walks will be established along the proof.

The following key result follows from a ``reversibility trick'' of Kesten \cite{KestenGWT}, also used in \cite{BergerTokushige}. Recall the quantity $\widetilde{\tau}_{n}$ defined in \eqref{eqn:ExitDTRW}.

\begin{prop}\label{prop:AnnealedExit}
    Let $s> d+2$. For all $\varepsilon>0$ there exists $c_\varepsilon > 0$ such that under the annealed law we have 
    \begin{equation*}
        \mathbb{P}_0\left( \widetilde{\tau}_{n} \le c_{\varepsilon} n^2 \right) \le \varepsilon.
    \end{equation*}
    The same bound also holds under the conditioned measure $\mathbb{P}_0(\cdot \mid 0 \in \mathcal{C}_\infty)$
\end{prop}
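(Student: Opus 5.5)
We follow the reversibility argument of Kesten as used by Berger and Tokushige. The aim is to show that the walk, started from stationarity, cannot travel distance $n$ in $cn^2$ steps. The displacement is split into a martingale part, controlled by second moments, and an ``anti-martingale'' part, controlled by time reversal.

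\textbf{Step 1: stationary starting point.} We first work under $\mathbb{P}^\nu$, where the environment seen from the walk is stationary. Since $\mathrm{deg}(0)\ge 1$ on $\{0\in\mathcal{C}_\infty\}$, and since by Lemma~\ref{lemma:VolLargeSets} and Lemma~\ref{lem:MaxDeg} the degree has exponential moments, the Radon--Nikodym derivatives between $\mu$, $\mu_0$ and $\nu$ are controlled. Cauchy--Schwarz (or a truncation of $\mathrm{deg}(0)$ at a large constant) then transfers a bound $\mathbb{P}^\nu(\cdot)\le \varepsilon'$ into a bound $\mathbb{P}_0(\cdot)\le C\sqrt{\varepsilon'}$. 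The same transfer handles $\mathbb{P}_0(\cdot\mid 0\in\mathcal{C}_\infty)$, because the conditioning event has probability $\theta>0$.

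\textbf{Step 2: martingale decomposition.} Let $V(x)=\mathrm{deg}(x)^{-1}\sum_y\omega(x,y)(y-x)$ be the local drift. Write
\begin{equation*}
\widetilde X_k=M_k+A_k, \qquad A_k=\sum_{j<k}V(\widetilde X_j),
\end{equation*}
so that $M_k$ is a martingale with increments $\widetilde X_{j+1}-\widetilde X_j-V(\widetilde X_j)$. Under $\mathbb{P}^\nu$, stationarity gives
\begin{equation*}
\mathbb{E}^\nu\|M_k\|^2\le k\,\mathbb{E}^\nu\|\widetilde X_1\|^2 = k\,\frac{\mathbf{E}\bigl[\sum_y\omega(0,y)\|y\|^2\bigr]}{\mathbf{E}[\deg(0)]}.
\end{equation*}
This is finite exactly because $\sum_y\|y\|^{2-s}<\infty$ when $s>d+2$. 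Doob's inequality then gives $\mathbb{P}^\nu(\max_{k\le cn^2}\|M_k\|\ge n/4)\le C c$.

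\textbf{Step 3: the reversibility trick.} Under $\mathbb{P}^\nu$ the path $(\widetilde X_0,\dots,\widetilde X_m)$, viewed from the environment, has the same law as the reversed path; this uses reversibility of $\nu$. Reversing the path transforms $A_m$ into $-A_m$ plus the backward-martingale term. Concretely, with $\widehat M$ the martingale of the reversed walk,
\begin{equation*}
\widetilde X_m-\widetilde X_0 = M_m + A_m, \qquad \widetilde X_0-\widetilde X_m = \widehat M_m + A_m.
\end{equation*}
Subtracting gives $2(\widetilde X_m-\widetilde X_0)=M_m-\widehat M_m$. Hence
\begin{equation*}
\max_{k\le m}\|\widetilde X_k\|\le \max_{k\le m}\|M_k\|+\max_{k\le m}\|\widehat M^{(m)}_k\|,
\end{equation*}
after also applying the trick to initial segments via a standard $\max$ argument. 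Each martingale is controlled by Doob's inequality as in Step 2, which gives $\mathbb{P}^\nu(\widetilde\tau_n\le cn^2)\le C'c$. Choosing $c=c_\varepsilon$ small finishes the proof.

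\textbf{Main obstacle.} The delicate point is the supremum over $k\le m$ in Step 3. The identity holds at the fixed time $m$, but the reversal of an initial segment is a different martingale for each $k$. We would follow \cite{BergerTokushige}: apply the identity at the dyadic times $m2^{-j}$ and bound the maximum over each block using Doob's inequality and a union bound. Alternatively, we can use Lyons' forward--backward decomposition, which gives $\mathbb{E}\max_{k\le m}\|\widetilde X_k\|^2\le Cm\,\mathbb{E}^\nu\|\widetilde X_1\|^2$ directly. The only model input is the finiteness of the second moment of a jump, which is where $s>d+2$ is used.
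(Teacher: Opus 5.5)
Your proposal is correct and follows essentially the same route as the paper (Kesten's reversibility trick under the degree-biased stationary measure $\nu$, forward and time-reversed martingales controlled by Doob's inequality via $\sum_y\|y\|^{2-s}<\infty$, then transfer to $\mu$ and $\mu_0$). The transfer needs no Cauchy--Schwarz: the walk is frozen when $\deg(0)=0$, so $\mathbb{P}^{\mu}_0\le \mathbf{E}[\deg(0)]\,\mathbb{P}^{\nu}_0$ on the exit event.

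The only slip is that your reversed identity drops the boundary drift terms. Reversing once on $[0,m]$ gives $2(\widetilde X_k-\widetilde X_0)=M_k-(\widehat M_m-\widehat M_{m-k})+V(\widetilde X_0)-V(\widetilde X_k)$ for every $k\le m$. This settles the supremum over $k$ with a single backward martingale, so no dyadic scheme is needed. The extra terms are harmless since $\mathbb{E}^\nu\max_{k\le m}\|V(\widetilde X_k)\|^2\le (m+1)\,\mathbb{E}^\nu\|\widetilde X_1\|^2$.
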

\begin{proof}
Recall that the measure $\mathbf{P}^{\nu}(\cdot)$ defined in \eqref{eqn:DefStationaryMeasure} is stationary and ergodic for the random walk. We will denote by $\mathbb{P}^\nu$ the annealed law of the walk under environments chosen according to $\nu$. We begin by showing that the required bound holds under $\mathbb{P}^\nu$: 
    \begin{equation*}
        \mathbb{P}_0^\nu\left( \max_{k \le n} \|\widetilde{X}_n\|^2 > M n \right) \le \varepsilon.
    \end{equation*}
    To this end, let us define by 
    \begin{equation*}
        d^\omega(x) \coloneqq \sum_{y \in \mathbb{Z}^d} (y-x) p^\omega(x, x + y),
    \end{equation*}
    the local drifts. Then we notice that
    \begin{equation*}
        M_{\ell} \coloneqq \sum_{k = 1}^\ell (\widetilde{X}_{k} - \widetilde{X}_{k-1}) - \sum_{k = 1}^\ell d^\omega(\widetilde{X}_{k-1}),
    \end{equation*}
    is a quenched martingale. Note that the variance of the annealed increments of $M_{\ell}$ is finite as
    \begin{equation*}
        \sum_{y \in \mathbb{Z}^d} (y-x)^2 p^\omega(x, x + y) = \beta \sum_{k = 1}^\infty k^2 k^{d-1} k^{-s} < \infty.
    \end{equation*}
    Let $Y_k \coloneqq \widetilde{X}_{n} - \widetilde{X}_{n-k}$, $k = 0, \dots, n$. Then we notice that 
    \begin{equation*}
        X_n = \frac{1}{2}(\widetilde{X}_n + Y_n).
    \end{equation*} 
    We can also define
    \begin{equation*}
        M'_{\ell} \coloneqq \sum_{k = 1}^\ell (Y_{k} - Y_{k-1}) - \sum_{k = 1}^\ell d^\omega(\widetilde{X}_{n-k}),
    \end{equation*}
    which is also a martingale with the same distribution as $M_\ell$ (but adapted to the time-reversed filtration). Furthermore, we obtain
    \begin{align*}
        \|\widetilde{X}_n\|^2 &= \frac{1}{4} \|M_n + M_n' - d^\omega(\widetilde{X}_{0}) + d^\omega(\widetilde{X}_{n})\|^2 \\
        & \le \|M_n \|^2 + \|M_n'\|^2 + \|d^\omega(\widetilde{X}_{0})\|^2 + \|d^\omega(\widetilde{X}_{n})\|^2.
    \end{align*}
    The last two terms are not problematic for us as their expectation is constant. For the first two, since both are $L^2$ martingales, we have for some $\sigma^2<\infty$
    \begin{equation*}
        \mathbb{E}^\nu_0\left[\|M_n \|^2\right] \le \sigma^2 n \mbox{ and } \mathbb{E}^\nu_0\left[\|M_n' \|^2\right] \le \sigma^2 n.
    \end{equation*}
    Then, Doob's maximal inequality yields
    \begin{equation*}
        \mathbb{P}^\nu_0\left(\max_{k \le n} \|\widetilde{X}_k\|^2 > M n\right)\le \frac{4\mathbb{E}^\nu_0\left[\|M_n \|^2\right]}{Mn} \le \frac{4\sigma^2}{M}.
    \end{equation*}
     To conclude the proof under $\mathbb{P}^\nu$, we observe that
    \begin{equation*}
        \mathbb{P}_0^{\nu}\left( \widetilde{\tau}_{n} \le c_{\varepsilon} n^2 \right) = \mathbb{P}_0^\nu\left( \max_{k \le c_\varepsilon n^2} \|\widetilde{X}_k\|^2 > n^2 \right) \le \varepsilon,
    \end{equation*}
    where we can choose $c_\varepsilon = 1/M$ and then apply the estimates above.
    Furthermore, recalling that $\mu$ is the original measure of the environment, we notice that $\{\max_{k \le n} \|X_k\|^2 > M n\} \cap \{\mathrm{deg}^\omega(0) = 0\} = \emptyset$. Hence, we obtain
    \begin{equation*}
        \mathbb{P}^\nu_0\left(\max_{k \le n} \|\widetilde{X}_k\|^2 > M n\right) \ge \frac{1}{\mathbf{E}^\mu[\mathrm{deg}^\omega(0)]} \mathbb{P}^\mu_0\left(\max_{k \le n} \|\widetilde{X}_k\|^2 > M n\right) = c \mathbb{P}^\mu_0\left(\max_{k \le n} \|\widetilde{X}_k\|^2 > M n\right),
    \end{equation*}
    yielding the result under $\mathbb{P}^\mu$.
    
    To deduce the result under the conditioned measure, recall that $\mu_0, \nu_0$ be the two environment measures conditional on $0$ being in the infinite cluster. To transfer the bound to $\mu_0$ we can apply \cite[Lemma 9.4]{CrawfordSly2} and conclude that there exist $C_1, C_2 > 0$ such that for any event $A$,
    \begin{equation*}
        \mathbb{P}^{\mu_0}_0(A) \le C_1 \mathbb{P}^{\nu_0}_0(A) \le C_2 C_1\mathbb{P}^{\nu}_0(A).
    \end{equation*}
\end{proof}

\begin{cor}\label{cor:QuenchedExit}
    For all $\varepsilon>0$ there exists $c'_\varepsilon > 0$ such that under we have 
    \begin{equation*}
        P^\omega_0\left( \widetilde{\tau}_{n} \le c'_{\varepsilon} n^2 \right) \le \varepsilon,
    \end{equation*}
    with $\mathbf{P}$-probability larger than $1-\varepsilon$, the same result holds for $\mathbf{P}( \cdot \mid 0 \in \mathcal{C}_\infty)$-probability larger than $1-\varepsilon$.
\end{cor}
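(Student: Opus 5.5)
The plan is to pass from the annealed bound of Proposition~\ref{prop:AnnealedExit} to a quenched one with Markov's inequality applied to the environment. Write
\begin{equation*}
F_n(\omega) \coloneqq P^\omega_0\left( \widetilde{\tau}_{n} \le c n^2 \right),
\end{equation*}
a $[0,1]$-valued measurable function of $\omega$. By the definition of the annealed law, $\mathbf{E}[F_n(\omega)] = \mathbb{P}_0(\widetilde{\tau}_n \le c n^2)$. Given a target $\varepsilon>0$, I would apply Proposition~\ref{prop:AnnealedExit} with the tolerance $\varepsilon^2$ rather than $\varepsilon$. This produces a constant $c_{\varepsilon^2}>0$ with $\mathbf{E}[F_n] \le \varepsilon^2$ for all $n$ (for small $n$ one can shrink the constant so that $c n^2<1$, since $\widetilde{\tau}_n \ge 1$ always). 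Set $c'_\varepsilon \coloneqq c_{\varepsilon^2}$.

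Markov's inequality then gives
\begin{equation*}
\mathbf{P}\left(F_n(\omega) > \varepsilon\right) \le \frac{\mathbf{E}[F_n]}{\varepsilon} \le \varepsilon,
\end{equation*}
so that with $\mathbf{P}$-probability at least $1-\varepsilon$ we have $P^\omega_0(\widetilde{\tau}_n \le c'_\varepsilon n^2)\le \varepsilon$, which is the first claim.

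The conditioned statement follows by exactly the same argument. Proposition~\ref{prop:AnnealedExit} also holds under $\mathbb{P}_0(\cdot\mid 0\in\mathcal{C}_\infty)$, and the corresponding annealed probability equals
\begin{equation*}
\mathbf{E}\left[F_n \mid 0 \in \mathcal{C}_\infty\right].
\end{equation*}
Running the Proposition again with tolerance $\varepsilon^2$ and applying Markov's inequality under $\mathbf{P}(\cdot\mid 0\in\mathcal{C}_\infty)$ gives the second claim. If desired, one takes the minimum of the two constants so that a single $c'_\varepsilon$ works for both measures.

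There is essentially no obstacle here. The one point to check is that the annealed measure in Proposition~\ref{prop:AnnealedExit} is precisely the $\mathbf{E}$-average (respectively the conditional average) of the quenched probability $P^\omega_0$, and that $F_n$ is measurable in $\omega$. Both are immediate from the definitions, since $\widetilde{\tau}_n$ depends only on the walk and on finitely many relevant edge states up to any fixed time.
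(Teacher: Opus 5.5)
Your proposal is correct and is essentially the paper's own proof. Like the paper, you apply Proposition~\ref{prop:AnnealedExit} with tolerance $\varepsilon^2$, set $c'_\varepsilon = c_{\varepsilon^2}$, and apply Markov's inequality to $\omega \mapsto P^\omega_0(\widetilde{\tau}_n \le c'_\varepsilon n^2)$, repeating the same argument verbatim under $\mathbf{P}(\cdot \mid 0 \in \mathcal{C}_\infty)$.
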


\begin{proof}
    From Proposition~\ref{prop:AnnealedExit} we obtain that
    \begin{equation*}
        \mathbf{E}\left[P_0^\omega\left( \widetilde{\tau}_{n} \le c_{\varepsilon} n^2 \right)\right] \le \varepsilon.
    \end{equation*}
    Set $c'_{\varepsilon} = c_{\varepsilon^2}$. Then, by Markov's inequality we have
    \begin{equation*}
        \mathbf{P}\left(P_0^\omega\left( \widetilde{\tau}_{n} \le c_{\varepsilon^2} n^2 \right) > \varepsilon \right) \le \frac{\mathbf{E}\left[P_0^\omega\left( \widetilde{\tau}_{n} \le c_{\varepsilon^2} n^2 \right)\right]}{\varepsilon} \le \varepsilon.
    \end{equation*}
    The same proof works for $\mathbf{P}( \cdot \mid 0 \in \mathcal{C}_\infty)$.
\end{proof}

We are now in position to give the proof of the first part of our main result.

\begin{proof}[Proof of Theorem~\ref{theo:MixingDiffusive}, lower bound]
    We begin by defining a tessellation $\mathcal{T}_{\kappa n}$ of the box $\Lambda_n$ with disjoint boxes of side length $\lfloor2\kappa n\rfloor$ for $\kappa\in (0, 1)$ (eventually chosen small). Let $\{x_i\}_{i \ge 0}$ be the centers of those boxes so that 
    \begin{equation*}
        \{\Lambda_{\kappa n}(x_i)\}_{i = 1}^{\lfloor\kappa^{-d}\rfloor} \in \mathcal{T}_{\kappa n}.
    \end{equation*}
    We restrict ourselves to boxes that are completely contained inside $\Lambda_n$. Observe that for all $\kappa$ small enough there are at least $\lfloor\kappa^{-d}/2\rfloor$ such boxes. Then, by independence
    \begin{equation*}
        \mathbf{P}\left( \not\exists i,  x_i \in \mathcal{C}^{(\kappa n)}_1(x_i)\right) \le \mathbf{P}\left(0 \not \in \mathcal{C}^{(\kappa n)}_1\right)^{\lfloor\kappa^{-d}/2\rfloor}\le (1 - \theta(\beta, s)/2)^{\lfloor\kappa^{-d}/2\rfloor} \le \varepsilon,
    \end{equation*}
    where in the last step we used that $\kappa>0$ small enough. Let $x^*$ be one arbitrarily chosen $x_i$ such that $x_i \in \mathcal{C}^{(\kappa n)}_1(x_i)$. W.l.o.g. we assume in the remainder of the argument that $x_i = 0$. By Lemma~\ref{lemma:SecondLargest} w.h.p.\ the cluster $\mathcal{C}^{(n)}_1$ coincides with $\mathcal{C}_\infty |_{\Lambda_{n}}$. 

    We now observe that one can couple the walk on $\mathcal{C}_\infty |_{\Lambda_{n}}$, which we denote by $\{Y_k\}_{k \ge 0}$, with the one on $\mathcal{C}_\infty$ which we denote by $\{\widetilde{X}_k\}_{k \ge 0}$ so that the former achieves $\{\widetilde{\tau}_{\kappa n} \le c'_\varepsilon \kappa^2 n^2\}$ with at least the same probability. Indeed, before any edge exiting the box $\Lambda_{\kappa n}$ is met, the walks can be coupled in the trivial way. Then the outgoing edge $e$ is either present in both clusters or only in $\mathcal{C}_\infty$ and not in $\mathcal{C}_\infty |_{\Lambda_{n}}$. In the first case the coupling is trivial, in the second either $\{Y_k\}_{k \ge 0}$ exits immediately the box $\Lambda_{\kappa n}$ through $e$ achieving $\widetilde{\tau}_{\kappa n}$ or it stays inside. In the latter case the walk conditioned to stay inside the box has the same transition probability as $\{\widetilde{X}_k\}_{k \ge 0}$. Thus, the coupling is valid until $\widetilde{\tau}_{\kappa n}$ which is always achieved first by $Y$ (or it is achieved at the same time).
    % We observe that the event $\{\widetilde{\tau}_{\kappa n} \le c'_\varepsilon \kappa^2 n^2\}$ has higher (quenched and hence annealed) probability on the cluster $\mathcal{C}_\infty$ w.r.t.\ the cluster $\mathcal{C}_\infty |_{\Lambda_{n}}$. Indeed, there are more chances to exit $\mathcal{C}_\infty$ and the random walk on $\mathcal{C}_\infty$ and $\mathcal{C}_\infty |_{\Lambda_{n}}$ can perfectly coupled until time $\widetilde{\tau}_{\kappa n}$ (in $\mathcal{C}_\infty$).
    Thus we obtain, applying Corollary~\ref{cor:QuenchedExit},
    \begin{equation*}
        P^\omega_0\left( \widetilde{\tau}_{\kappa n} \le c'_{\varepsilon} \kappa^2 n^2 \right) \le \varepsilon.
    \end{equation*}
    Now, recall that $T_k$ is the $k$-th time of jump of the random walk. By standard estimates for random walks (the estimate being very rough here), for every $\delta > 0$ there exists $\eta= \eta(\delta)>0$ such that for all $k$ large enough
    \begin{equation*}
        P_0^\omega\left( T_{k} < \eta(\delta) k \right) \le \delta.
    \end{equation*}
   Thus, there exists $c_\varepsilon'' > 0$ such that $P_0^\omega\left( T_{c'_\varepsilon n^2} < c''_\varepsilon n^2 \right) \le \varepsilon$. Using Corollary~\ref{cor:QuenchedExit}, we obtain for any $\kappa>0$, 
    \begin{align*}
        P_0^\omega\left( \tau_{\kappa n} \le c_\varepsilon'' \kappa^2 n^2 \right) 
        &\le P_0^\omega\left( \tau_{\kappa n} \le c_\varepsilon'' \kappa^2 n^2,  T_{c'_\varepsilon \kappa^2 n^2} \ge c''_\varepsilon \kappa^2 n^2\right) + P_0^\omega\left( T_{c'_\varepsilon \kappa^2 n^2} < c''_\varepsilon \kappa^2 n^2\right)\\
        &\le P_0^\omega\left( \widetilde{\tau}_{\kappa n} \le c'_\varepsilon \kappa^2 n^2 \right) + \varepsilon\\
        & \le 2\varepsilon.
    \end{align*}
    Now, observe that here $\kappa>0$ here can be chosen arbitrarily small. In particular, we can choose it in such a way that with high probability (see Lemma~\ref{lemma:VolLargeSets}) we have
    \begin{equation*}
        |\pi(\Lambda_{\kappa n} \cap \mathcal{C}^{(n)}_1)| \le 2\bar{d}\kappa^d \le \frac{1}{10},
    \end{equation*}
    where we recall that $\mathbf{E}[\mathrm{deg}^\omega(0)] = \bar{d}>0$.
    
    Observe that this, together with Theorem~\ref{theo:Baumler}, immediately implies w.h.p.\ the existence of a set $A \subseteq ([-n, -\kappa n] \cup [\kappa n, n])^d \cap \mathcal{C}_1^{(n)}$ with $\pi(A) \ge 1/3$ (recall the definition of $\pi(A)$ in Section~\ref{sect:Cond}), such that 
    \begin{equation*}
        E_0^\omega\left[\tau_A \right] \ge (1-2\varepsilon)c_\varepsilon'' \kappa^2 n^2.
    \end{equation*}
    We conclude by directly applying \cite[Theorem~2]{Oliveira}. 
\end{proof}

\section{The truncated model}\label{sect:Trunc}

In this section we work with the model in which all edges above a certain threshold $N>0$ are truncated. In \cite{BergerLRP, BaumlerShapeTheo} it was shown, respectively for the case $s \in (d, d+2)$ and $s \ge d+2$, that it is possible to truncate the LRP and still observe a unique infinite component (resp.\ giant in finite volume) for all $\beta>\beta_c$.

Then, one can use the fact that the truncated model essentially behaves similarly to nearest-neighbour Bernoulli percolation to deduce properties of the LRP. Broadly speaking, we will also employ this strategy. In this section we confirm that the truncated model satisfies the isoperimetric properties of Bernoulli percolation. The arguments here are inspired by \cite{AntalPisztora, PeteIso,MitscheMartinezKiwi, HaalandSarkovic}.

Firstly, we introduce the $N$-truncated percolation model, that is, we will study the percolation model on $\mathbb{Z}^d$ with the modified connection probabilities 
\begin{equation*}
    \mathbf{p}^N(x, y)  \coloneqq \mathbf{p}(x, y)\mathds{1}\{\|x-y\|_\infty\le N\},
\end{equation*}
where $ \mathbf{p}(x, y)$ is as in~\eqref{eqn:ConnectionProbs}.
By the results of \cite{BergerLRP, BaumlerShapeTheo}, there exists $N>0$ such that $\mathbf{p}^N(x, y)$ is supercritical. This justifies the definition of $\mathcal{C}_1^{(n), N}$ to be the giant component of the $N$-truncated percolation inside $\Lambda_n$. It was shown in \cite[Equation~(38)]{BaumlerShapeTheo}  that for some $c>0$,
\begin{equation}\label{eqn:SecondLargTrunc}
    \mathbf{P}^N\left( t \le |\mathcal{C}_0| < \infty \right) \le e^{-ct^{1/d}}.
\end{equation}

Denoting by $E_N(A, B)$ the number of edges between $A$ and $B$ in the truncated model, the main result of this section is the following: 

\begin{prop}\label{prop:IsoTruncated}
     There exist constants $c_{\ell}, c_{b} > 0$ (only depending on $d,N$) such that $\mathbf{P}^N$-almost surely, for all $n$ large enough, we have that all sets $A \subseteq \mathcal{C}_1^{(n), N}$ with $c_{b} \log(n)^{\frac{d}{d-1}} \le |A| \le n/2$ are such that
    \begin{equation*}
        |E_N(A, A^c)| \ge c_{\ell} |A|^{\frac{d-1}{d}}\ge \frac{c_{\ell}}{N^d}|E_N(A, \Lambda_n)|^{(d-1)/d}.
    \end{equation*}
\end{prop}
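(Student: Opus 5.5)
The plan is to reduce the truncated model to a finite-range dependent percolation and then run the renormalisation/Peierls argument for isoperimetry of the Bernoulli giant cluster, as in \cite{PeteIso, MitscheMartinezKiwi}. The second inequality in the statement is immediate: in the $N$-truncated model every vertex has degree at most $(2N+1)^d$, so $|E_N(A,\Lambda_n)|\le (2N+1)^d|A|$. Up to adjusting $c_\ell$ (the factor $N^d$ absorbing constants), it therefore suffices to prove $|E_N(A,A^c)|\ge c_\ell |A|^{(d-1)/d}$. I read the upper constraint on $|A|$ as $|A|\le |\mathcal{C}_1^{(n),N}|/2$, or as a fixed fraction of $n^d$.

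\textbf{Step 1: coarse-graining.} Tile $\Lambda_n$ by boxes of side $K$, with $K$ a large constant depending on $N$ and $d$. Call a box \emph{good} if three things hold. First, its enlargement (say by factor $3$) contains a unique cluster of the truncated model of diameter at least $K/10$. Second, this cluster crosses the enlargement in every direction. Third, it is connected to the corresponding crossing clusters of all $*$-adjacent boxes. Since the truncated model has range $N$, goodness events are $k$-dependent for a fixed $k$. The uniqueness and crossing estimates I would take from the supercritical analysis of \cite{BaumlerShapeTheo}, in particular \eqref{eqn:SecondLargTrunc} together with a Grimmett--Marstrand-type statement for the truncated model. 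These give $\mathbf{P}^N(\text{box good})\to 1$ as $K\to\infty$, so by Liggett--Schonmann--Stacey the good boxes stochastically dominate Bernoulli site percolation with parameter close to $1$. This is the step I expect to be the main obstacle. Uniqueness of crossing clusters in a box is standard for nearest-neighbour Bernoulli percolation (Antal--Pisztora), but for the truncated LRP it has to be extracted from \cite{BaumlerShapeTheo}. I would first try to derive it by combining \eqref{eqn:SecondLargTrunc}, which rules out large finite clusters, with the density statement of Theorem \ref{theo:Baumler} applied to the truncated model.

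\textbf{Step 2: Peierls bound for the coarse lattice.} Take $A\subseteq\mathcal{C}_1^{(n),N}$ and classify each good box by whether its crossing cluster meets $A$ substantially, meets $A^c$ substantially, or both. Boxes where both occur contribute at least one edge of $E_N(A,A^c)$ each, because the crossing cluster is connected. This reduces the problem to a coarse set $\hat A$ of boxes on a lattice of $(n/K)^d$ sites in which bad boxes form a highly subcritical site percolation. What is needed is the standard estimate that, almost surely for large $n$, every $*$-connected set of bad boxes has size at most $C\log n$. More strongly, for every coarse set $\hat A$ of size at least $\log(n)^{d/(d-1)}$ whose outer boundary is $*$-connected, at most half of the boundary boxes are bad. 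This follows from a union bound over lattice animals, about $e^{Cm}$ of them of size $m$, against a probability $p_{\mathrm{bad}}^{cm}$, and is exactly what produces the threshold $c_b\log(n)^{d/(d-1)}$. The discrete isoperimetric inequality on $\mathbb{Z}^d$ then gives $|\partial\hat A|\ge c|\hat A|^{(d-1)/d}$, and good boundary boxes yield boundary edges of $A$.

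\textbf{Step 3: handling vertices of $A$ outside good boxes.} Vertices of $A$ lying in bad regions, or in small clusters of good boxes that join the giant only through bad regions, must be controlled by size. Each bad $*$-component has at most $C\log n$ boxes, hence $O(K^d\log n)$ vertices. Combining this with $|A|\ge c_b\log(n)^{d/(d-1)}$ shows that a constant fraction of $A$ lies in good boxes' crossing clusters, unless $A$ is contained in a neighbourhood of bad components. In that case $A$ has at least one boundary edge per bad component it touches, because $A\subsetneq\mathcal{C}_1^{(n),N}$ and the giant is connected. Together with the union-bound estimate this still gives $|A|^{(d-1)/d}$. Finally, a Borel--Cantelli argument over $n$ upgrades the bounds to a statement holding almost surely for all large $n$, since the failure probabilities are summable, of order $n^{-2}$.
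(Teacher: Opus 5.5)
Your architecture matches the paper's: good boxes, Liggett--Schonmann--Stacey domination, isoperimetry on the coarse lattice, and pull-back of coarse boundary edges. Replacing the paper's appeal to Corollary~1.4 of \cite{PeteIso} by a Peierls bound is a legitimate and more elementary substitute, because the bad-box density can be made as small as you like. Your degree bound $(2N+1)^d$ is also the right reason for the second inequality.

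The genuine gap is Step~3, and it comes from never using that $A$ is connected. Charging one boundary edge to each bad $*$-component that $A$ touches gives only $|E_N(A,A^c)|\gtrsim|A|/(K^d\log n)$, because such a component can carry $CK^d\log n$ vertices of $A$. This is smaller than $|A|^{(d-1)/d}$ as soon as $|A|\ll(\log n)^d$. For $d\ge3$ that is a nontrivial part of the range $|A|\ge c_b\log(n)^{d/(d-1)}$. The dichotomy before it (``a constant fraction of $A$ lies in crossing clusters unless $A$ sits near bad components'') is not justified either.

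No count of this type can work for general $A$. Suppose short edges have probabilities in $(0,1)$ and $\epsilon>0$ is small. Then, with high probability, the truncated giant carries many far-apart dangling paths of length $\epsilon\log n$, each attached to the rest by a single edge. The union $A$ of $\lfloor\log n\rfloor$ of them satisfies $|A|\asymp\log^2 n\ge c_b\log(n)^{d/(d-1)}$. Yet it has only $\lfloor\log n\rfloor\ll|A|^{(d-1)/d}$ boundary edges when $d\ge3$. The paper's proof restricts to connected $A$ at the outset, and you must do the same. Note that the paper's own first case also charges one boundary edge per small coarse component, so it has the same polylogarithmic loss near the lower threshold; you cannot simply defer to it.

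For connected $A$, the fix is to run your Peierls estimate once, on a $*$-connected object built from all of $A$, instead of on bad components one at a time. The $K$-boxes met by $A$ form a $*$-connected set $\hat S$ with $|\hat S|\ge|A|/K^d$, since truncated edges are shorter than $K$. Fill the holes of $\hat S$, so that bad boxes in the interior are simply absorbed. The exterior boundary of the filled set, together with the adjacent boxes of $\hat S$, forms a $*$-connected layer. As long as the filled set is at most half of the coarse box, the box isoperimetric inequality gives this layer size at least $c(|A|/K^d)^{(d-1)/d}\ge c'\log n$. This is exactly where the threshold $c_b\log(n)^{d/(d-1)}$ is used.

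Peierls then makes most of this layer good. You need a small bad fraction $\delta$ rather than $1/2$, since the inner side of the layer may now contain bad boxes. Every good--good pair across the layer produces an edge of $E_N(A,A^c)$ within distance $O(K)$, and each edge is counted $O(1)$ times. Only when the filled set is macroscopic do you need your full/partial/empty classification and $|A|\le|\mathcal{C}_1^{(n),N}|/2$ to pass to the complement.

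The pull-back uses that every vertex of $\mathcal{C}_1^{(n),N}$ in a good box lies in that box's crossing cluster, which is the local uniqueness of your Step~1. The route you suggest for it does not deliver it. \eqref{eqn:SecondLargTrunc} only shows that large clusters of an enlarged box belong to the infinite cluster. The density statement does not exclude a second, thin crossing cluster. So a genuine Antal--Pisztora-type two-arm estimate for the finite-range truncated model is needed. The paper's pull-back step implicitly relies on the same property.
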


The remainder of the section is devoted to the preparation of the proof of Proposition~\ref{prop:IsoTruncated} and finally the proof thereof.

To this end, recall that $\Lambda_n$ denotes the box of side length $n$. We define a partition of the box $\Lambda_n$ into tiles of $\infty$-radius $\rho=mN$, where the reader should think of $m=m(N)>0$ as a large integer (say $m=N^c$ for some large $c > 0$). We say that a box of side length $\rho$ has a \textbf{crossing component} if there exists a connected component (internal of $\Lambda_\rho$) that is closer than $m$ from all $(d-1)$-dimensional faces of the cube $\Lambda_\rho$. We say that a tile $\Lambda_\rho$ is \textbf{good} if it has one crossing component.

\begin{prop}\label{prop:TassellationSize}
    For any $\varepsilon>0$ there exists $m_0 = m_0(\varepsilon)>0$ such that for all $m \ge m_0$ and $\rho = m N$,
    \begin{equation*}
        \mathbf{P}^N\left(\Lambda_\rho \text{ is good}\right) \ge 1 - \varepsilon.
    \end{equation*}
\end{prop}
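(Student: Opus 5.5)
Since $\rho = mN$ and the truncated model has range $N$, we rescale: after coarse-graining space into blocks of side $N$, the box $\Lambda_\rho$ is a box of side $m$ in a finite-range supercritical model. The claim is then the standard fact that, in a large box of a supercritical finite-range model, there is w.h.p.\ a component meeting the $m$-neighbourhood of every face. We deduce it from the finite-volume giant-component estimates already available for the $N$-truncated model, namely Theorem~\ref{theo:Baumler} applied to $\mathbf{p}^N$ (legitimate because \cite{BaumlerShapeTheo} proves it for the truncated supercritical model) and the tail bound \eqref{eqn:SecondLargTrunc}.

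\textbf{Step 1: a density giant.} Apply Theorem~\ref{theo:Baumler} to $\mathbf{P}^N$ in $\Lambda_\rho$. With probability at least $1-\varepsilon/3$ for $\rho$ large, the largest internal component $\mathcal{C}^{(\rho),N}_1$ has at least $(\theta_N-\delta)\rho^d$ vertices, where $\theta_N>0$ is the truncated percolation density.

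\textbf{Step 2: the giant reaches every face.} For each of the $2d$ faces $F$, let $S_F$ be the slab of points of $\Lambda_\rho$ at $\infty$-distance less than $m$ from $F$, and split $S_F$ into subboxes $B$ of side $m$ (roughly $(\rho/m)^{d-1}=N^{d-1}$ of them). Take one fixed such $B$ per face. We need the giant to intersect $B$.
\begin{itemize}
\item By Theorem~\ref{theo:Baumler} applied inside $B$, with probability at least $1-\varepsilon/(12d)$ the box $B$ contains an internal component of size at least $c\,m^d$.
\item Any component of $\Lambda_\rho$ containing it then has size at least $c\,m^d$.
\end{itemize}
It remains to rule out two distinct components of $\Lambda_\rho$ of sizes at least $c\,m^d$ and $(\theta_N-\delta)\rho^d$ respectively. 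Summing \eqref{eqn:SecondLargTrunc} over starting points, the probability that some vertex of $\Lambda_\rho$ lies in a finite cluster of $\mathbb{Z}^d$ of size at least $c\,m^d$ is at most $\rho^d e^{-c' m}=(mN)^d e^{-c'm}$. This is small for $m$ large with $N$ fixed, so w.h.p.\ both large components lie in $\mathcal{C}_\infty$.

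\textbf{Step 3: uniqueness inside the box.} Two large internal components of $\Lambda_\rho$ may both belong to $\mathcal{C}_\infty$ yet be joined only outside $\Lambda_\rho$. This is the main obstacle. The first approach is to enlarge the subboxes $B$ so that each is well inside the range of the uniqueness statement of Lemma~\ref{lemma:SecondLargest}, which in its truncated version says the second-largest component has polylogarithmic size. Concretely, take the component of size $c\,m^d$ inside $B$. If it were distinct from $\mathcal{C}^{(\rho),N}_1$, then $\Lambda_\rho$ would have a second-largest component of size at least $c\,m^d\gg\log(\rho)^\zeta$, which has small probability for $m$ large.

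If only the almost-sure version of Lemma~\ref{lemma:SecondLargest} is available, we instead use the quantitative estimate underlying it. This is the Antal--Pisztora/Bäumler-type bound $\mathbf{P}^N(|\mathcal{C}^{(\rho)}_2|\ge t)\le \rho^d e^{-ct^{1/d}}$, which is essentially \eqref{eqn:SecondLargTrunc} localized to the box. Applying it with $t=c\,m^d$ and taking a union bound over the $2d$ faces gives total failure probability at most $\varepsilon$ for $m\ge m_0(\varepsilon)$. Hence $\mathcal{C}^{(\rho),N}_1$ is within distance $m$ of every face, so $\Lambda_\rho$ is good.
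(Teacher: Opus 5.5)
Your Steps 1--2 are fine, but Step 3 is not justified as written, and you correctly identify it as the whole difficulty. The first route is unavailable for two reasons. Lemma~\ref{lemma:SecondLargest} concerns the untruncated model: a bound on $|\mathcal{C}^{(\rho)}_2|$ gives no bound on $|\mathcal{C}^{(\rho),N}_2|$, since two large truncated components may be joined by a long edge. Worse, in the paper that lemma is derived from Theorem~\ref{theo:Partition}, whose proof uses Proposition~\ref{prop:Volume}, which is proved from the very proposition you are proving. Invoking it is circular.

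The second route is not a localization of \eqref{eqn:SecondLargTrunc}. That bound controls only \emph{finite} clusters of $\mathbb{Z}^d$. It says nothing about two pieces of $\mathcal{C}_\infty\cap\Lambda_\rho$ that are joined only outside $\Lambda_\rho$, which is exactly your obstacle. A box estimate $\mathbf{P}^N(|\mathcal{C}^{(\rho),N}_2|\ge t)\le\rho^d e^{-ct^{1/d}}$ is a genuine uniqueness theorem. Combined with your Step 2 it already implies the proposition, and its standard proofs for finite-range models (Penrose--Pisztora, Antal--Pisztora) rely on the kind of crossing-cluster renormalization this proposition is meant to supply. So Step 3 assumes something at least as strong as the claim.

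The gap closes once you notice that nothing quantitative is needed. Because $N$ is fixed and $\rho=mN$, your threshold $c\,m^d=cN^{-d}|\Lambda_\rho|$ is a fixed macroscopic fraction of the box. It therefore suffices to know that for every fixed $\delta>0$, $\mathbf{P}^N(|\mathcal{C}^{(\rho),N}_2|\ge\delta\rho^d)\to0$ as $\rho\to\infty$. This weak uniqueness of the giant is exactly the external input the paper itself uses (it quotes Easo--Hutchcroft, for boxes of side $m$). Apply it once at scale $\rho$ with $\delta=cN^{-d}$ and take a union bound over the $2d$ faces: any component of $\Lambda_\rho$ of size at least $c\,m^d$ must then be the largest one, so the largest component meets every face slab. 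Step 1 and the remark about $\mathcal{C}_\infty$ then become superfluous.

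With that fix your route is more direct than the paper's. The paper tiles $\Lambda_\rho$ by the $N^d$ side-$m$ ``useful'' boxes and chains their giants through shifted overlapping boxes. It then appeals to Liggett--Schonmann--Stacey domination and Penrose--Pisztora to produce the crossing component. You use the same uniqueness input at a single scale and never need the merging-through-overlaps step.
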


\begin{proof}
The proof follows the ideas of 
    Theorem 2 of Penrose and Pisztora~\cite{PenrosePisztora} with small modifications. First, call a box of side length $m$ \emph{useful} (inside the larger box of side length $\rho$, and $m$ chosen in such a way that the boxes of side length $m$ perfectly tessellate the box of side length $\rho$) if it has exactly one  component of size at least $\theta m^d/2$ (where $\theta > 0$ is the fraction of vertices in the infinite component). 
    
    By \cite[Theorem 1.8]{BaumlerShapeTheo}, the probability of having at least one such component can be made arbitrarily close to $1$ by choosing $m \ge N$ large enough. Moreover, since only vertices at distance at most $N$ from the boundary of the box may contribute to the vertex boundary of the box, the second largest component is smaller than $\delta m^d$ for arbitrarily small $\delta > 0$ (by choosing $m$ large enough, see for example Theorem 1.2 of~\cite{EasoHutchcroft}). Hence, for any $\varepsilon > 0$, the probability that a box of side length $m$ is useful can be made at least $1-\varepsilon$. Furthermore, by ergodicity, shifting such a box of side length $m$ by a small constant $c=c(\theta)$ in one of the $d$ dimensions, the shifted box is also useful with probability at least $1-\varepsilon$. In particular, if two adjacent boxes of side length $m$ in the box of side length $\rho$ are both useful and all boxes shifted by $c$ between the boxes are also useful, then the largest components of the two adjacent boxes must merge (the overlap is chosen large enough). 
    
    By Theorem 0.0 of~\cite{liggettdomination}, the resulting $k$-dependent percolation (with $k$ only depending on $c$, thus only depending on $\theta$) on boxes of side length $m$ dominates supercritical site Bernoulli percolation. By Proposition 1 of~\cite{PenrosePisztora}, the box of side length $\rho$ thus has one crossing component (in the original graph) with probability $1-\varepsilon'$ for arbitrarily small $\varepsilon' > 0$. %By the same proposition with the same probability, this component will also be part of the largest component when extending this to the whole graph.
Thus, a tile of side length $\rho$ is good with probability at least $1-\varepsilon$, and the lemma follows.
    %Finally, if in the box of side length $\rho$ a second component of Euclidean diameter at least $\rho/5$  existed, such a component would have to avoid all crossing components of all at least $N/6$ boxes of side length $m$ in between, which happens, by a union bound over all $\rho^d$ starting points, with probability at most $\rho^d (1-\varepsilon)^{N/6} \le \varepsilon$ (the latter inequality holds since somewhere the crossings become close, and where we used the fact that $N \ge \log^2(\rho)$) \dmc{I think this is the only point where we need that the tile is not too big (and we cannot take the whole space in particular already here)}. Thus, each box of side length $\rho$ is good with probability at least $1-\varepsilon$.
\end{proof}

We obtain then the following proposition: 
\begin{prop}\label{prop:Volume}
    For all $\eta < \theta(s, \beta)$ there exists $c>0$ such that for any $n$ large enough,
    \begin{equation*}
        \mathbf{P}^N\left(|\mathcal{C}^{(n), N}_1 | \le \eta n^d \right) \le ce^{- n^{d-1}}.
    \end{equation*}
    Moreover, under the obvious coupling between $\mathbf{P}$ and $\mathbf{P}^N$, with $\mathbf{P}$-probability larger than $1 - e^{-cn^{d-1}}$ we have
    \begin{equation*}
        \mathcal{C}^{(n), N}_1 \subseteq \mathcal{C}^{(n)}_1.
    \end{equation*}
\end{prop}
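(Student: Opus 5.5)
We use the standard coarse-graining (renormalisation) scheme built on the good tiles of Proposition~\ref{prop:TassellationSize}. Tile $\Lambda_n$ by boxes of side $\rho = mN$. We call a tile \emph{very good} if it is good and, in addition, its crossing component is connected to the crossing components of all $*$-adjacent tiles. Concretely, the union of the tile and each $*$-neighbour (a box of side $2\rho$) should also contain a unique crossing component, which contains both crossing clusters. In the truncated model the state of a tile depends only on edges within $\infty$-distance $N$ of a bounded neighbourhood of it. The tile process is therefore $k$-dependent with $k$ depending only on $d$. By Proposition~\ref{prop:TassellationSize}, applied also to the doubled boxes, each tile is very good with probability at least $1-\varepsilon$ once $m$ is large. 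By Liggett--Schonmann--Stacey domination, the very good tiles dominate Bernoulli site percolation with parameter $p(\varepsilon)\to 1$ on the renormalised lattice $\Lambda_{n/\rho}$.

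\textbf{Volume bound.} On the renormalised lattice, we apply the standard Peierls estimate for highly supercritical site percolation in a box. It shows that the number of vertices of the renormalised box not in the largest $*$-connected cluster of very good tiles exceeds $\delta (n/\rho)^d$ with probability at most $e^{-c n^{d-1}}$. Indeed, the bad region must be separated from the rest by a $*$-connected blocking surface of bad tiles. A surface of size $\ell \ge c(n/\rho)^{d-1}$ has probability at most $(C(1-p))^{\ell}$, and there are at most $C^\ell$ such surfaces. Alternatively, if the bad set is spread out, its size alone is binomially controlled with a $e^{-c n^d}$ tail.

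Every very good tile in the giant renormalised cluster carries a crossing component, and these are all connected. Hence they form a single connected cluster $\mathcal{K}$ of the truncated graph. It remains to lower bound $|\mathcal{K}|$. Inside each very good tile, the crossing component contains at least $(\theta-\delta')\rho^d$ vertices except on an event of small probability. This is the ``useful'' requirement, folded into the definition of good, using \cite[Theorem~1.8]{BaumlerShapeTheo} in the tile. Taking $\varepsilon,\delta,\delta'$ small relative to $\theta-\eta$ yields $|\mathcal{K}|\ge \eta n^d$. Consequently $|\mathcal{C}_1^{(n),N}|\ge|\mathcal{K}|\ge\eta n^d$ off an event of probability $ce^{-n^{d-1}}$, after adjusting constants.

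\textbf{Inclusion in the untruncated giant.} Under the monotone coupling every truncated edge is an LRP edge, so $\mathcal{C}_1^{(n),N}$ lies inside a single LRP component $\mathcal{C}$ of $\Lambda_n$ with $|\mathcal{C}|\ge\eta n^d$. We need $\mathcal{C}=\mathcal{C}_1^{(n)}$. Any other LRP component of size at least $\eta n^d$ would also have to be disjoint from $\mathcal{K}$. Suppose two components each had at least $\eta n^d > n^d/2$ vertices. This is impossible if we choose $\eta>1/2$, but $\theta$ may be smaller than $1/2$, so instead we argue as follows. A competing component $\mathcal{C}'$ of size at least $\eta n^d$ must place a positive fraction of its vertices in very good tiles. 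There it must avoid the crossing components, hence lie in non-crossing truncated clusters of those tiles. Together with the Peierls control, this forces many disjoint truncated clusters of diameter $\ge m$ inside tiles, each costing a factor $\varepsilon$. We also fold the property that every truncated cluster of diameter at least $m$ meeting the tile belongs to the crossing one into the definition of good; this is again standard uniqueness with probability $\to1$.

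With that strengthened definition, every vertex of $\mathcal{C}'$ in a very good tile lies in a small truncated cluster of diameter less than $m$. Then $\mathcal{C}'$ is a union of small clusters glued by long edges. The total vertex count in such small clusters is at most $\delta n^d$, except on an event of probability $e^{-cn^{d}}$ by a large-deviation bound using \eqref{eqn:SecondLargTrunc}. Taking $\delta<\eta$ rules out $\mathcal{C}'$ and gives $\mathcal{C}_1^{(n),N}\subseteq\mathcal{C}_1^{(n)}$.

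\textbf{Main obstacle.} I expect the main obstacle to be making the $e^{-cn^{d-1}}$ rate rigorous in the Peierls step. The renormalised percolation is only dominated, not independent, and the count of blocking surfaces inside a finite box with boundary effects must be handled carefully. A secondary delicate point is the concentration of the volumes of crossing clusters across tiles. I would handle it by including the density requirement in the goodness event rather than summing random volumes.
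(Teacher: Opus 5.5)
Your volume argument is essentially the paper's: good tiles of side $\rho=mN$, merging of neighbouring crossing clusters, Liggett--Schonmann--Stacey domination, and a surface-order large deviation bound for the dominating site percolation. The paper imports that bound from Antal--Pisztora rather than running a Peierls count.

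\textbf{The gap.} It is in the inclusion part. You claim that the vertices lying in truncated clusters of diameter less than $m$ number at most $\delta n^d$, except with probability $e^{-cn^d}$, ``by \eqref{eqn:SecondLargTrunc}''. That estimate only says that \emph{large} finite truncated clusters are rare. Vertices in finite truncated clusters have density $1-\theta^N$, where $\theta^N\coloneqq\mathbf{P}^N(0\leftrightarrow\infty)\le\theta(s,\beta)$. By \eqref{eqn:SecondLargTrunc}, almost all of them lie in clusters of bounded size. So small truncated clusters cover a fraction at least $1-\theta(s,\beta)-o_m(1)$ of $\Lambda_n$; for the usual kernel, a positive fraction of vertices is even isolated in the truncated graph. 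This fraction is not small. Scarcity therefore cannot exclude a competitor $\mathcal{C}'$ glued together from such clusters by long edges, and folding uniqueness of large clusters into goodness does not help.

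\textbf{The missing idea.} Long edges are independent of the truncated configuration, while $\mathcal{K}$ is dense in every tile of the giant renormalised cluster. Condition on all edges of length at most $N$, on your good event. A vertex $x$ in such a tile has at least $c\rho^d$ vertices of $\mathcal{K}$ in its own tile at $\infty$-distance more than $N$, since $\rho=mN\gg N$. So, conditionally, $x$ has an open long edge into $\mathcal{K}$ with probability at least some $q=q(\rho)>0$.

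Now explore the LRP cluster in $\Lambda_n$ of a fixed vertex $v$, revealing at each step only the long edges from the currently processed vertex to unprocessed vertices. Until $\mathcal{K}$ is reached, the edges into $\mathcal{K}$ are fresh at every step. A cluster with at least $\eta n^d$ vertices avoiding $\mathcal{K}$ therefore requires at least $(\eta-\delta)n^d$ independent such trials to fail. This has probability at most $(1-q)^{(\eta-\delta)n^d}$. A union bound over $v$ shows that every LRP component with at least $\eta n^d$ vertices meets $\mathcal{K}$. Both $\mathcal{C}_1^{(n)}$ and the LRP component of $\mathcal{C}_1^{(n),N}$ are such components, so both equal the LRP component of $\mathcal{K}$. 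This gives the inclusion. The paper settles this step in one unexplained sentence, so it genuinely needs an argument.

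\textbf{A separate point.} Your tile density $(\theta-\delta')\rho^d$ is only correct with $\theta^N$ in place of $\theta(s,\beta)$. Moreover $\theta^N<\theta(s,\beta)$ strictly as soon as short edges are not a.s.\ open: the origin may have all edges of length at most $N$ closed and a long edge into $\mathcal{C}_\infty$. So the first claim can only be proved for $\eta<\theta^N$. This defect comes from the statement itself (the paper's proof only yields some density $\mu>0$). Still, you should not attribute density $\theta(s,\beta)$ to truncated crossing clusters.
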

\begin{proof}
  Tessellate the cube of side length $n$ into cells of side length $\rho$ (assuming w.l.o.g.\ that this is possible), that are good with probability at least $1-\varepsilon$ by Proposition~\ref{prop:TassellationSize}. Moreover, if two adjacent boxes of side length $\rho$ as well as the box of side length $\rho$ shifted half way between them are all good, then the giant components of the two adjacent boxes must merge (we used $m=N^c \gg N$). Thus, again by Theorem 0.0 of~\cite{liggettdomination}, the resulting $k$-dependent percolation dominates supercritical site percolation and thus, by Proposition 1 of~\cite{AntalPisztora}, a giant component of size at least $\mu n^d$ with probability at least $\exp(-cn^{d-1})$. Moreover, with the same probability this giant is contained in the infinite component, and the proposition follows. 
\end{proof}

We are now ready for the proof of Proposition~\ref{prop:IsoTruncated}: 

\textit{Proof of Proposition~\ref{prop:IsoTruncated}:} Tessellate as before into boxes of side length $\rho$ that are good with probability at least $1-\varepsilon$ as seen in Proposition~\ref{prop:TassellationSize}. Define an auxiliary graph by introducing one vertex for each such box, and if two adjacent boxes as well as the shifted box half way between them are all good, then draw an edge between the corresponding vertices. Once again, if two adjacent boxes of side length $\rho$ as well as the box of side length $\rho$ shifted half way between them are all good, then the giant components of the two adjacent boxes must merge (we used $m=N^c \gg N$). 

Thus, again by Theorem 0.0 of~\cite{liggettdomination}, the resulting $k$-dependent percolation dominates supercritical site percolation. By Corollary 1.4 of~\cite{PeteIso}, with probability $1-\varepsilon$, for any $\varepsilon > 0$, in the auxiliary graph, every set $S$ inside the giant component of this graph with $c\log(n)^{d/(d-1)} \le |S| \le |\mathcal{H}_1^{(n),N}|/2$ (with $\mathcal{H}_1^{(n), N}$ denoting the largest component of the auxiliary graph), satisfies $|E(S,S^c)| \ge \alpha|S|^{1-1/d}$ for some $\alpha > 0$. Now, consider a subset $A \subseteq \mathcal{C}_1^{(n),N}$ of the original graph with $c_{b}\log(n)^{d/(d-1)} \le |A| \le n/2$ and $c_{b} > 0$ large enough. Since the lower bound on isoperimetry is attained for connected sets, we may $A$ assume to be connected. Moreover, since the number of vertices in each box is at most $\rho^d$, $A$ must be contained in at least $|A|/\rho^d$ many boxes (and in at most $|A|$ many). Now, consider the set $A'\supseteq A$ as the filled-up version of $A$, which is the set that contains all vertices in all boxes that have at least one vertex of $A$. Note that $|A| \le |A'| \le \rho^d |A|$, and that the set of boxes corresponding to $A$ (or $A'$) is diagonally connected. 

Suppose first that at least half of the boxes of $A$ do not belong to the giant component of the auxiliary graph. Since each such component contains at most $c'(\log n)^{d/(d-1)}$ vertices for some $c' > 0$, we get $|E(A,A^c)|\ge c''|A|^{1-o(1))} \ge c_{\ell}|A|^{\frac{d-1}{d}}$, and we are done. Otherwise, if $A \ge c_{b}\log(n)^{d/(d-1)}$ and $A'$ is diagonally connected, at least a constant fraction $c_1$ of the boxes of $A'$ must belong to the giant component of the auxiliary graph. Let $A''$ be the restriction of $A'$ to boxes in this giant component, and note that $|A''| \ge c'\log(n)^{d/(d-1)}$. For this set, with $S$ denoting the corresponding boxes of $A''$, by the above, in the auxiliary graph, $|E(S,S^c)| \ge \alpha |S|^{1-1/d}$. By adapting constants, this also translates to $A''$ (and thus to $A$), since a boundary edge in the auxiliary graph translates to a boundary edge in the original graph: indeed, if there is an edge between a box $\Lambda \in S$, and $\Lambda' \in S^c$, then there exists $v(\Lambda) \in A$ contained in the box $\Lambda$, whereas no vertex in $\Lambda'$ belongs to $A$. Since the edge in the auxiliary graph is there, the giant components of $\Lambda$ and $\Lambda'$ merge, so there is a direct path from vertices in the giant inside $\Lambda$ to the one inside $\Lambda'$, without passing through other boxes. In particular, there is a path from $v(\Lambda)$ to $\Lambda'$, and therefore there is a boundary edge.

The second inequality follows trivially from the first one since each vertex of $A$ has at least degree $1$. \hfill\qed

\section{Preparation for the upper bound}\label{sect:Prep}

The main result of this section will be called the \emph{allocation theorem}. We split the proof into the two regimes $s \in (d,2d)$ and $s \ge 2d$ since the scales we will use in the two regimes are different. The case $s \in (d, 2d)$ is heavily inspired by the proof of~\cite[Theorem~2.1]{CrawfordSly1}. However, since therein the result is stated for $s \in (d, d+2)$ only and since our proof is simpler (as we do not require the spectral gap estimates derived there), we will nevertheless give a proof of the first regime as well.

\begin{thm}\label{theo:Partition}
    Let $s>d$. For any $\varepsilon>0$, $n$ large enough and $M \ge n^\varepsilon$, there exist deterministic constants $\rho_1, \rho_2, c_1, \Delta_1 > 0$ and an event $A_n$ of $\mathbf{P}$-probability at least $1-\exp(-c_1\log^2(n))$ such that on $A_n$ the following holds.
    
    There exists a function $\phi \colon \mathcal{C}_1^{(n)} \setminus \mathcal{C}_1^{(n), N} \to \mathcal{T}_M$ with $x \mapsto \Lambda_M$ such that denoting
    \begin{equation*}
        \widetilde{\mathcal{C}}^{(n)}(\Lambda_M) \coloneqq \mathcal{C}_1^{(n), N}(\Lambda_M) \cup \{x \colon \phi(x) = \Lambda_M\},
    \end{equation*}
    we have:
    \begin{enumerate}
        \item $|\mathcal{C}_1^{(n), N}(\Lambda_M) \bigcap \Lambda_M| \ge \rho_1 M^{d}$ for all $\Lambda_M \in \mathcal{T}_M$.
        \item $|E(\widetilde{\mathcal{C}}^{(n)}(\Lambda_M), \Lambda_n)| \le \rho_2 M^d$ for all $\Lambda_M \in \mathcal{T}_M$ and $\widetilde{\mathcal{C}}^{(n)}(\Lambda_M)$ is connected.
        \item For all $x \in \mathcal{C}_1^{(n)}\setminus \mathcal{C}_1^{(n), N}$ we have 
        \[
        d_{\widetilde{\mathcal{C}}^{(n)}(\Lambda_M)}(x, \phi(x)) \le \log^{\Delta_1}(n).
        \]
    \end{enumerate}
    We also have that $|\mathcal{C}_2^{(n)}(\Lambda_n)| \le \log^{\Delta_{1}}(n)$. Furthermore, with $\mathrm{diam}(\cdot)$ denoting the diameter (in graph distance), for $s \in (d, 2d)$ we have that $\mathrm{diam}(\mathcal{C}^{(n)}_1)\le \log^{\Delta_{1}}(n)$.
\end{thm}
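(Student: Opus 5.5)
Throughout, I work on the event where three things hold simultaneously. First, the truncated giant $\mathcal{C}_1^{(n),N}$ is contained in $\mathcal{C}_1^{(n)}$ (Proposition~\ref{prop:Volume}). Second, $\max_{x\in\Lambda_n}\deg^\omega(x)\le U\log n$ (Lemma~\ref{lem:MaxDeg}). Third, every tile $\Lambda_M\in\mathcal{T}_M$ contains at least $\rho_1M^d$ vertices of $\mathcal{C}_1^{(n),N}$ connected inside a slightly enlarged tile. The third property comes from the renormalisation behind Proposition~\ref{prop:Volume}: each tile fails it with probability at most $e^{-cM^{d-1}}\le e^{-cn^{\varepsilon(d-1)}}$, so a union bound over the $n^d$ tiles gives property (1). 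I define $\mathcal{C}_1^{(n),N}(\Lambda_M)$ as this local piece of the truncated giant. Consecutive pieces merge through the overlap argument, so each piece is connected.

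The core of the proof is the allocation of the vertices in $\mathcal{C}_1^{(n)}\setminus\mathcal{C}_1^{(n),N}$. Such a vertex $x$ lies in a finite cluster $K(x)$ of the $N$-truncated model. By \eqref{eqn:SecondLargTrunc} and a union bound over $x\in\Lambda_n$, every such cluster has size at most $\log^{2d}(n)$ off an event of probability $e^{-c\log^2 n}$, and hence diameter at most $\log^{2d}(n)$.
\begin{enumerate}
\item Since $x\in\mathcal{C}_1^{(n)}$, there is a path from $x$ to the truncated giant. I consider the first long edge (length $>N$) that leaves $K(x)$ on this path, and I want to show the path reaches the giant within polylogarithmically many steps.
\item For $s\ge 2d$, long edges are rare: the expected number of edges of length $\ge \ell$ touching a fixed set $K$ is $\lesssim |K|\ell^{d-s}$. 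I explore from $K(x)$ through its finite truncated neighbours. Each attached cluster is again small, and the exploration forms a subcritical branching-type structure, because the density of the truncated giant is $\theta_N>0$ and a long edge from a polylog-size set hits the giant with probability bounded below.
\item Hence, with probability $1-e^{-c\log^2n}$, within $\log^{\Delta}(n)$ exploration steps and Euclidean distance $\log^{\Delta}(n)$ the path hits $\mathcal{C}_1^{(n),N}$, at a vertex in some tile $\Lambda_M$. I set $\phi(x)$ to be that tile, or the tile containing $x$ when the two are within distance $\log^\Delta n \ll M$.
\item For $s\in(d,2d)$, edges can be long, so I instead follow the Crawford--Sly hierarchical argument of \cite{CrawfordSly1}. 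Polylogarithmic diameter comes from the Biskup-type bound $\mathrm{diam}(\mathcal{C}_1^{(n)})\le\log^{\Delta}(n)$ \cite{BiskupDiameter}. I then allocate $x$ to the tile where its geodesic first enters the truncated giant.
\end{enumerate}
Property (3) follows in both regimes.

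For property (2), the edge-volume of $\widetilde{\mathcal{C}}^{(n)}(\Lambda_M)$ is controlled by showing that the set $\{x:\phi(x)=\Lambda_M\}$ lies within a $\log^\Delta(n)$-neighbourhood of the tile (when $s\ge2d$). Its edge-volume is then bounded by Lemma~\ref{lemma:VolLargeSets} applied to the enlarged box of volume $\asymp M^d$: the probability that it exceeds $\rho_2M^d$ is $e^{-cM^d}$, which survives the union bound. For $s<2d$, allocated vertices may be spatially far from their tile. Here I would balance the load as Crawford--Sly do, via a matching/Hall-type argument using the exponential tail of allocated mass per tile. This load balancing is the step I expect to be the main obstacle: ensuring that no tile receives more than $O(M^d)$ edges without losing polylogarithmic factors.

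The claim $|\mathcal{C}_2^{(n)}|\le\log^{\Delta_1}n$ uses the same exploration. A component disjoint from $\mathcal{C}_1^{(n),N}$ of size larger than $\log^{\Delta_1}n$ would require an exploration of a polylog-size union of truncated-finite clusters that never hits the giant, despite having many long edges each hitting it with positive probability. This has probability $e^{-c\log^2 n}$, and a union bound over starting points finishes it.
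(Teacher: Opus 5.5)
There is a genuine gap in your proof of property (2), and it is not confined to $s<2d$. For $s\ge 2d$ you assert that, with probability $1-e^{-c\log^2 n}$ simultaneously for all $x$, the path from $x$ enters $\mathcal{C}_1^{(n),N}$ within Euclidean distance $\log^{\Delta}(n)$. You then control the load of $\Lambda_M$ by applying Lemma~\ref{lemma:VolLargeSets} to a $\log^{\Delta}(n)$-enlargement of the tile. This locality fails. Fix $x$ and consider the event that $x$ is isolated in the $N$-truncated model, has no open edge of length in $(N,L)$, and has an open edge of length at least $L$ landing in $\mathcal{C}_1^{(n),N}$. This event has probability of order $L^{d-s}$, polynomial in $L$ rather than $e^{-c\log^2 n}$. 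Take $M=n^{\varepsilon}$ and $L=n^{\delta}$ with $\varepsilon<\delta<d/(s-d)$; small $\varepsilon$ is exactly the case used in Section~\ref{sect:UB}. Then w.h.p.\ there are of order $n^{d-\delta(s-d)}\to\infty$ such vertices. Each lies in $\mathcal{C}_1^{(n)}\setminus\mathcal{C}_1^{(n),N}$, and all its neighbours are at distance at least $L\gg M$. So connectedness of $\widetilde{\mathcal{C}}^{(n)}(\Lambda_M)$ together with property (3) forces the allocated set containing $x$ to stretch over distance $L$. It is therefore not contained in a $\log^{\Delta}(n)$-neighbourhood of its tile. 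Far-away allocations cannot be ruled out in either regime. The load-balancing step you flag as the main obstacle for $s<2d$, and only name via a Hall-type matching, is needed for all $s>d$; without it property (2) is unproved.

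The paper supplies this in both regimes by a two-stage revealment.
\begin{enumerate}
\item First, all edges of length below a scale $M_3$ are revealed; $M_3$ is polylogarithmic for $s\ge 2d$ and $M_3=\lfloor M_2^{1/2}\rfloor$ for $s<2d$. Vertices are attached to the core through breadth-first layers $A_k$ with a deterministic tie-breaking rule. This consistent rule, rather than your per-vertex rule ``the tile where its geodesic first enters the giant'', is what makes each allocated set connected. At this stage allocations are spatially local, and the leftover clusters $\mathscr{C}$ have polylogarithmic size.
\item Second, the graph on leftover clusters induced by edges of length at least $M_3$ is dominated by a subcritical branching process with mean of order $\sup|\mathscr{C}|\,M_3^{d-s}\ll 1$. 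Hence each cluster of clusters has at most $\log^2 n$ members.
\item Separately, a concentration bound shows that the core of each mesoscopic box $\Lambda_{M_2}$ receives only $O(M_2^dM_3^{d-s})$ edges of length at least $M_3$.
\end{enumerate}
Multiplying, each box gains at most $M_2^dM_3^{d-s}\,\mathrm{polylog}(n)\le\rho_2M_2^d$ edges. Far allocations are thus shown to be sparse per box rather than excluded; this is the idea missing from your proposal.

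A smaller point concerns your bound on $|\mathcal{C}_2^{(n)}|$. A single long edge does not hit the truncated giant with probability bounded below, since the giant is only guaranteed to be dense on scale $M_4=\log^{\xi_1}(n)$. The per-vertex connection probability is of order $M_4^{d-s}$, which still suffices for clusters of size much larger than $M_4^{s-d}\log^2 n$.
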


% \begin{thm}\label{theo:Partition}
%     Let $s>d$. For any $\varepsilon>0$, $n$ and $M \ge n^\varepsilon$ large enough, there exist universal constants $\rho_1, \rho_2, C_1, c_2, \Delta_1$ such that there exists an event $A_n$ of $\mathbf{P}$-probability at least $1-C_1 \exp(-c_2\log^2(n))$ such that on this event there exists a partition of $\mathcal{C}_1^{(n)}$ into connected components $\{\widetilde{\mathcal{C}}^{(n)}(\Lambda_M)\}_{\Lambda_M \in \mathcal{T}_M}$ with the properties
%     \begin{enumerate}
%         \item $|\widetilde{\mathcal{C}}^{(n)}(\Lambda_M) \bigcap \Lambda_M| \ge \rho_1 M^{d}$ for all $\Lambda_M \in \mathcal{T}_M$.
%         \item $|\widetilde{\mathcal{C}}^{(n)}(\Lambda_M)| \le \rho_2 M^d$ \dm{for all $\Lambda_M \in \mathcal{T}_M$}.
%         \item \dmc{why is this a third item? It is not related to the partition. I'd put it not as item}For all $x \in \mathcal{C}_1^{(n)}$ we have $d(x, \mathcal{C}_1^{(n), N}) \le \log^{\Delta_1}(n)$.
%     \end{enumerate}
%     Furthermore, for $s \in (d, 2d)$ we have that $\mathrm{diam}(\mathcal{C}^{(n)}_1)\le \log^{\Delta_{1}}(n)$, where $\mathrm{diam}(\cdot)$ denotes the diameter in graph distance.
% \end{thm}

\subsection{The allocation theorem for $s \in (d, 2d)$}

In this subsection we prove Theorem~\ref{theo:Partition} in the case $s \in (d, 2d)$. Observe that the third enumerated item in Theorem~\ref{theo:Partition} is clearly implied by the subsequent estimate $\mathrm{diam}(\mathcal{C}^{(n)}_1)\le \log^{\Delta_{1}}(n)$, and we thus will prove the latter.

\subsubsection{Preparation and some notation}

We fix $\zeta_1>0$ large enough and the scales $n \ge M(n) \eqqcolon M_1 \ge 1$ and
\begin{equation}\label{eqn:Scales}
    M_2 \coloneqq \lfloor M^{(s-d)/d}\log^3(M)\rfloor, \qquad M_3 \coloneqq \lfloor M_2^{1/2} \rfloor, \qquad \qquad M_4 \coloneqq \lfloor \log^{\zeta_1}(n) \rfloor. 
\end{equation}
 We will choose the scales so that $M_1 \gg M_2 \gg M_3 \gg M_4 = \log^{\zeta_1}(n)$. For any pair of integers $L>0$ and $K>0$ with $K$ being a divisor of $L$, recall that $\mathcal{T}_K(\Lambda_L)$ is the partition of the box $\Lambda_L$ of side length $L$ into boxes of side length $K$. For $j = 1 \dots, 3$ we define successively the nested partitions
\begin{equation*}
    \mathcal{T}_{M_{j+1}} \coloneqq \bigcup_{\Lambda_{M_j} \in \mathcal{T}_{M_j}} \mathcal{T}_{M_{j+1}}(\Lambda_{M_j}).
\end{equation*}
Let $\mathcal{F}_*$ be the $\sigma-$field that includes all edges on pairs of vertices $x, y$ contained in the same $\Lambda_{M_4} \in \mathcal{T}_{M_4}$. We introduce the $\sigma$-fields
\begin{equation}\label{eqn:SigmaH}
    \mathcal{H}_j \coloneqq \sigma\left( \mathcal{F}_* \vee \sigma(\omega_{xy} \colon \|x-y\|_\infty < M_j)  \right).
\end{equation}
Let $\mathscr{H}_j$ be the graph measurable w.r.t.\ $\mathcal{H}_j$.

\subsubsection{Construction of the core}

We define the core of the graph by introducing random sets $\mathcal{G}(\Lambda_{M_4}) \subseteq \Lambda_{M_4}$ as the largest connected component $\mathcal{C}(\Lambda_{M_4})$. Clearly, we are revealing only edges of length at most $\sqrt{d}M_4$ (and only within boxes of the partition $\mathcal{T}_{M_4}$) in this first step of the procedure. By union bound and Proposition~\ref{prop:Volume} one obtains that there exists an event $\mathcal{O}$ with $\mathbf{P}$-probability larger than $1 - \exp(-\log^2(n))$ on which $|\mathcal{C}_1^{(n), N}(\Lambda_M)| \ge \rho_1 M_4^d$ for all $\Lambda_{M_4} \in \mathcal{T}_{M_4}$. Furthermore, on the same event $\mathcal{O}$, we have that $\mathcal{C}_1^{(n), N}(\Lambda_M) \subseteq \mathcal{G}(\Lambda_{M_4})$ and hence $|\mathcal{G}(\Lambda_{M_4})| \ge \rho_1 M_4^d$ in every $\Lambda_{M_4} \in \mathcal{T}_{M_4}(\Lambda_n)$.

We observe that the allocation function $\phi$ implicitely defined here is the trivial one (each site in the cluster is assigned to its own box). Further notice that the graph distance between a vertex $x \in \Lambda_M$ and $\mathcal{C}_1^{(n), N}(\Lambda_M)$ is deterministically at most $M_4^d \le \log^{\Delta_1}(n)$.

Let us consider the graph $\mathcal{G}$ with vertex set $\bigcap_{\Lambda_{M_4} \in \mathcal{T}_{M_4}} \mathcal{G}(\Lambda_{M_4})$ and then revealing all edges between those vertices. We prove the following result, which is similar to \cite[Lemma~2.6]{CrawfordSly1}:

\begin{prop}\label{prop:CoreProps}
    There exist constants $\Delta_{2}, \rho_1, \rho_2, c_2 > 0$ such that for all $n$ large enough there exists an event $\mathcal{O}$ with $\mathbf{P}$-probability larger than $1 - \exp(-c_2 \log^2(n))$ such that on $\mathcal{O}$,
    \begin{enumerate}
    \item $\rho_1 M_4^d \le |\mathcal{G}(\Lambda_{M_4})|- 1 \le |E(\mathcal{G}(\Lambda_{M_4}), \Lambda_n)|  \le \rho_2 M_4^d $ in every $\Lambda_{M_4} \in \mathcal{T}_{N_4}(\Lambda_n)$.
        \item For all $j = \{1, 2, 3, 4\} $, the graph $\mathcal{G}(\Lambda_{M_j}) \subseteq \Lambda_{M_j}$ is connected within $\Lambda_{M_j}$ for all $\Lambda_{M_j} \in \mathcal{T}_{M_j}(\Lambda_n)$. 
        \item All $\mathcal{G}(\Lambda_{M_j})$ are connected by an edge to $\mathcal{G}(\Lambda_{M_j}')$ whenever $\Lambda_{M_j}$ and $\Lambda_{M_j}'$ are $*$-adjacent. 
        \item For $j = 2, 3$, $\mathrm{diam}(\mathcal{G}(\Lambda_{M_j})) \le \log^{\Delta_{2}}(M_j) M_4$.
    \end{enumerate}
\end{prop}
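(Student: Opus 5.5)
We prove the four items separately and take $\mathcal{O}$ to be the intersection of the corresponding good events. Each good event will fail with probability at most $\exp(-c\log^2(n))$. Since there are at most $n^d$ boxes at every scale, a union bound over boxes and over $j$ only costs a polynomial factor, which this bound absorbs.

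\textbf{Item (1).} This is local to each $\Lambda_{M_4}$. We apply Proposition~\ref{prop:Volume} in the box $\Lambda_{M_4}$ with $M_4=\lfloor\log^{\zeta_1}(n)\rfloor$. The truncated giant inside $\Lambda_{M_4}$ has size at least $\eta M_4^d$ except with probability $c\,e^{-M_4^{d-1}}\le e^{-\log^2 n}$, once $\zeta_1$ is large. The largest component $\mathcal{G}(\Lambda_{M_4})$ contains it, which gives the lower bound. The middle inequality $|\mathcal{G}|-1\le |E(\mathcal{G},\Lambda_n)|$ holds because $\mathcal{G}$ is connected and so contains a spanning tree. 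For the upper bound we use Lemma~\ref{lemma:VolLargeSets} with $A=\Lambda_{M_4}$ and $\mu$ large: the total degree is at most $\mu M_4^d$ except with probability $e^{-c(\mu)M_4^d}$.

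\textbf{Items (2) and (3).} These go by a multiscale induction from $j=4$ up to $j=1$, following \cite[Lemma~2.6]{CrawfordSly1}. The case $j=4$ holds by definition of $\mathcal{G}(\Lambda_{M_4})$. For $j\le 3$, let $\Lambda_{M_j}$ and $\Lambda'_{M_j}$ be two $*$-adjacent boxes with $\mathcal{G}(\Lambda_{M_j})$, $\mathcal{G}(\Lambda'_{M_j})$ defined as the unions of the $\mathcal{G}(\Lambda_{M_{j+1}})$ inside them. Conditionally on $\mathcal{H}_{j+1}$, the edges of length between $M_{j+1}$ and $cM_j$ joining these two sets are still unrevealed and independent. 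There are at least $(\rho_1 M_j^d)^2$ candidate pairs, each open with probability at least $\beta (cM_j)^{-s}$. Hence the probability that no such edge is open is at most
\[
\exp\!\left(-c\,\rho_1^2 M_j^{2d-s}\right),
\]
which is at most $\exp(-\log^2 n)$ because $s<2d$ and $M_j\ge M_4=\log^{\zeta_1} n$. This gives item (3) at scale $j$. Item (2) at scale $j$ follows from item (3) at scale $j+1$: the subboxes of $\Lambda_{M_j}$ form a $*$-connected family, and adjacent cores are joined by an edge lying inside $\Lambda_{M_j}$. The point to handle carefully is that $\mathcal{G}(\cdot)$ at scale $j$ is built only from information in $\mathcal{H}_{j+1}$, so that the conditional independence above is legitimate. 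I would also reveal the scales in the order $j=3,2,1$ to keep the filtrations nested.

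\textbf{Item (4).} This is the main obstacle. The plan is a recursion: a diameter bound at scale $j$ in terms of scale $j+1$, with only polylogarithmic loss. For two points $x,y\in\mathcal{G}(\Lambda_{M_j})$ with $j\in\{2,3\}$, we want a path of at most $O(\log M_j)$ boxes of size $M_{j+1}$, consecutive boxes joined by long edges, in the spirit of the hierarchical diameter argument of Biskup \cite{BiskupDistance1}. Using $M_3=M_2^{1/2}$, a union bound over pairs of boxes shows that any two $M_3$-boxes in $\Lambda_{M_2}$ at distance $r$ are joined by a direct edge with probability at least $1-\exp(-cM_3^{2d}r^{-s})$. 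Chaining through intermediate boxes then gives a polylogarithmic number of hops. Each hop costs at most the scale-$(j+1)$ diameter, and at scale $4$ the diameter is trivially at most $M_4^d$. This should yield $\mathrm{diam}\le \log^{\Delta_2}(M_j)M_4$. If direct chaining fails at some scale, I would fall back on the choice $M_2=M^{(s-d)/d}\log^3 M$, which is made exactly so that typical boxes at distance $M_1$ are connected by an edge.
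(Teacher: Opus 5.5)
Your items (1)--(3) are fine and follow the paper: Proposition~\ref{prop:Volume} and Lemma~\ref{lemma:VolLargeSets} give (1), the bound $\exp(-c\rho_1^2M^{2d-s})$ gives (3), and (2) follows from (3). You do not need the induction over $j$. Any two $*$-adjacent $M_j$-boxes contain $*$-adjacent $M_4$-subboxes, so (3) at scale $M_4$ alone already gives (3), and then (2), at every scale. That scale-$M_4$ estimate is legitimate because the cores are $\mathcal{F}_*$-measurable.

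The genuine gap is item (4), which the paper obtains by citing \cite[Lemma~2.7]{CrawfordSly1}.
\begin{itemize}
\item Your pairwise bound $1-\exp(-cM_3^{2d}r^{-s})$ is close to $1$ only for $r\ll M_3^{2d/s}$, and $2d/s<2$ because $s>d$. For $r$ of order $M_2=M_3^2$ the exponent is of order $M_3^{2d-2s}\to 0$, so the union bound over pairs of $M_3$-boxes fails for most pairs.
\item Chaining through boxes at mutual distance at most $M_3^{2d/s-\delta}$ therefore needs of order $M_3^{2-2d/s+\delta}$ hops, which is polynomial in $n$.
\item The same happens at $j=3$. Two $M_4$-cores are typically joined directly only up to distance about $M_4^{2d/s}$, a power of $\log n$, while $M_3$ is a power of $n$.
\end{itemize}
So no recursion that only passes between the four fixed scales $M_1,\dots,M_4$ can produce $\log^{\Delta_2}(M_j)M_4$. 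Your sentence ``chaining through intermediate boxes then gives a polylogarithmic number of hops'' is exactly what has to be proved. Also, $O(\log M_j)$ hops is too optimistic: Biskup's exponent $\log 2/\log(2d/s)$ exceeds $1$.

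The missing idea is Biskup's hierarchical recursion over a growing number of scales, which is what \cite[Lemma~2.7]{CrawfordSly1} carries out.
\begin{itemize}
\item Fix $\gamma\in(s/(2d),1)$. Take two core points in a box of side $L$. The unions of cores in the boxes of side $L^\gamma$ containing them have $\gtrsim\rho_1L^{d\gamma}$ vertices each. They are therefore joined by a direct edge except with probability $\exp(-cL^{2d\gamma-s})$, which gives $D(L)\le 2D(L^\gamma)+1$.
\item Iterate along $L,L^\gamma,L^{\gamma^2},\dots$ for $k\le \log\log L/\log(1/\gamma)$ steps, until the side reaches order $M_4$.
\item Stopping at that scale keeps every failure probability below $\exp(-cM_4^{2d\gamma-s})\le\exp(-\log^2 n)$ once $\zeta_1$ is large. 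A union bound over polynomially many boxes and $O(\log\log n)$ levels then applies.
\item The outcome is at most $2^k\le\log^{\log 2/\log(1/\gamma)}(L)$ bottom-level pieces. Each has diameter polylogarithmic in $n$ by items (2)--(3).
\end{itemize}

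Two smaller remarks.
\begin{itemize}
\item Your base-case bound $M_4^d$ instead of $O(M_4)$ is harmless. Since $M\ge n^{\varepsilon}$, we have $\log M_j\asymp\log n$, hence $M_4^{d-1}\le \log^{C}(M_j)$.
\item Your fallback misreads the choice of $M_2$. With $M_2=M^{(s-d)/d}\log^3 M$ one has $M_2^{2d}M^{-s}=M^{s-2d}\log^{6d}M\to 0$, so two $M_2$-boxes at distance $M$ are typically \emph{not} joined. The choice only makes an $M_2$-box carry, in expectation, of order $\log^{3d}M$ edges of length of order $M$. In any case, distances of order $M_1$ say nothing about diameters inside $\Lambda_{M_2}$ and $\Lambda_{M_3}$.
\end{itemize}
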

\begin{proof}
    The first property follows by Proposition~\ref{prop:Volume} and Lemma~\ref{lemma:VolLargeSets}. The second property follows by construction. The third property follows from the fact that the probability that two giant clusters in neighbouring boxes are not connected by an edge is at most
    \begin{equation*}
        \exp\left( - c \frac{\rho_1^{2}M_4^{2d}}{M_4^s}\right),
    \end{equation*}
    which can be made less than $\exp\{-c\log^2(n)\}$ by choosing $\zeta_1>0$ large enough (recall that $c>0$ is adjusted step by step).
    The fourth estimate is \cite[Lemma~2.7]{CrawfordSly1}, which also yields that the largest connected component not connected to the core is at most of size $M_4\log^2(n)$.
\end{proof}

\subsubsection{Allocating small clusters to the core}\label{sect:SmallToCoresSmall}

Recall the definition of the $\sigma$-field \eqref{eqn:SigmaH} and that $\mathcal{F}_*$ is the $\sigma$-field associated to the core. For $j = 1, \dots, 4$ we define
\begin{equation}\label{eqn:SigmaF}
    \mathcal{F}_j \coloneqq \sigma\left( \mathcal{F}_* \vee \sigma(\omega_{xy} \colon \|x-y\|_\infty < M_j \wedge M_2), x, y \in \cup_{\Lambda_{M_4} \subseteq \Lambda_{M_j}} \mathcal{G}(\Lambda_{M_4}), \Lambda_{M_j} \in \mathcal{T}_{M_j} \right).
\end{equation}

The allocation of small clusters in $\Lambda_n \setminus \mathcal{G}$ to $\mathcal{G}(\Lambda_{M_j}), j = 3, 2, 1$ will be done in two steps. Firstly, we will introduce edges at smaller scales to then move towards longer edges in the second step.

Let $\mathscr{H}_j$ be the graph measurable w.r.t.\ $\mathcal{H}_j$. Let $\mathcal{C}(j, \mathcal{G})$ be the connected component of $\mathcal{G}$ once we have revealed all edges of length at most $M_j$ with the properties showed in Proposition~\ref{prop:CoreProps}. Consider the sets
\begin{equation*}
    R_k(\mathcal{G}) = \{x \in \mathcal{C}(j, \mathcal{G}) \colon d_{\mathscr{H}_j} (x, \mathcal{G}) \le k\}.
\end{equation*}
Let $A_0(\Lambda_{M_2}) = \mathcal{C}(j, \mathcal{G})$ and then define inductively
\begin{equation*}
    A_{k+1}(\Lambda_{M_2}) = \left\{x \in R_{k+1}(\mathcal{G}) \setminus R_k(\mathcal{G}) \colon d_{\mathscr{H}_j}(x,A_{k}(\Lambda_{M_2})) \le 1 \right\},
\end{equation*}
with a pre-established deterministic rule to break ties between distinct boxes $\Lambda_{M_2}$ and $\Lambda'_{M_2}$ so that the sets $\{A_{k+1}(\Lambda_{M_2})\}_{\Lambda_{M_2}}$ are disjoint. Define the assignment function
\begin{equation}\label{eqn:Allocation}
    \phi_j (x) \coloneqq \Lambda_{M_2} \qquad \text{if} \qquad x \in \bigcup_{k}A_{k}(\Lambda_{M_2}).
\end{equation}
Then, defining
\begin{equation*}
    \mathcal{G}(j, \Lambda_{M_2}) \coloneqq (\mathcal{G} \cap \Lambda_{M_2}) \bigcup\{x \colon \phi_j (x) = \Lambda_{M_2} \},
\end{equation*}
we see that the sets $\{\mathcal{G}(j, \Lambda_{M_2})\}_{\Lambda_{M_2} \in \mathcal{T}_{M_2}}$ partition $\mathcal{C}(j, \mathcal{G})$ into connected subsets.

We summarise the properties of the sets $\{\mathcal{G}(j, \Lambda_{M_2})\}_{\Lambda_{M_2} \in \mathcal{T}_{M_2}}$ in the following proposition.

\begin{prop}\label{prop:IntermediateAllocationSmalls}
    There exist constants $\rho_1, \rho_2>0$, $\Delta_{3}, c_3 > 0$ such that there exists an event $\mathcal{U}$ with $\mathbf{P}$-probability larger than $1 - \exp(-c_3 \log^2(n))$ such that on $\mathcal{U}$ for all $\{\mathcal{G}(3, \Lambda_{M_2})\}_{\Lambda_{M_2} \in \mathcal{T}_{M_2}}$,
    \begin{enumerate}
        \item $\rho_1 M_2^d \le |\mathcal{G}(3, \Lambda_{M_2})| - 1 \le E(\mathcal{G}(3, \Lambda_{M_2}), \Lambda_n)  \le \rho_2 M_2^d $ in every $\Lambda_{M_2} \in \mathcal{T}_{M_2}(\Lambda_n)$.
        \item $\mathrm{diam}(\mathcal{G}(3, \Lambda_{M_2})) \le \log^{\Delta_{3}}(M_2)$ for all $\Lambda_{M_2} \in \mathcal{T}_{M_2}(\Lambda_n)$.
        \item The largest cluster not connected to the $\mathcal{G}(3, \Lambda_{n})$ is of size $M_4^{s-d}\log^4(n)$.
    \end{enumerate}
\end{prop}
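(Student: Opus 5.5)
We propose to obtain the three properties from Proposition~\ref{prop:CoreProps}, together with a union bound over the $O(n^d)$ boxes and vertices, working on the event $\mathcal{O}$ and revealing edges of length below $M_3$ in $\mathcal{H}_3$. We take $\mathcal{U}$ to be the intersection of $\mathcal{O}$, the event of Lemma~\ref{lem:MaxDeg} (maximal degree at most $U\log n$), and the events constructed below. Each failure probability will be at most $\exp(-c\log^2 n)$.

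For property (1), the lower bound follows from property (1) of Proposition~\ref{prop:CoreProps}. Indeed, $\mathcal{G}(3,\Lambda_{M_2})$ contains $\mathcal{G}\cap\Lambda_{M_2}$, which is a union of $(M_2/M_4)^d$ cores, each of size at least $\rho_1 M_4^d$. The inequality $|\cdot|-1\le |E(\cdot,\Lambda_n)|$ holds because the set is connected. The upper bound is the main work. The allocated vertices $x$ with $\phi_3(x)=\Lambda_{M_2}$ lie in clusters of $\mathscr{H}_3\setminus\mathcal{G}$ attached to $\mathcal{G}\cap\Lambda_{M_2}$. Since all edges used have length less than $M_3\ll M_2$, and by item (3) below, every allocated vertex lies within Euclidean distance $M_3\cdot M_4^{s-d}\log^4 n\ll M_2$ of $\Lambda_{M_2}$. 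Hence $\mathcal{G}(3,\Lambda_{M_2})\subseteq\Lambda_{2M_2}$, a deterministic set of size $O(M_2^d)$. Lemma~\ref{lemma:VolLargeSets} with $\mu$ large, applied to $A=\Lambda_{2M_2}$, then bounds the edge volume by $\rho_2 M_2^d$ with failure probability $e^{-cM_2^d}$. A union bound over the boxes gives property (1).

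For property (3), we bound the size of a cluster of $\mathscr{H}_3$ that avoids $\mathcal{G}$. Every vertex outside $\mathcal{G}$ lies in a $\Lambda_{M_4}$-box, in a component of size at most $M_4\log^2 n$ not merged with the core (this is the by-product of \cite[Lemma~2.7]{CrawfordSly1} quoted above). Such a small cluster $S$ has at least one vertex and faces $\Omega(M_4^d)$ core vertices within distance $M_4$. Conditionally on $\mathcal{F}_*$, the edges from $S$ to the core are independent, so $S$ avoids being attached with probability at most $\exp(-c|S|M_4^{d-s})$. A union bound over connected shapes of $S$ requires $|S|M_4^{d-s}\gtrsim\log^2 n$ plus an entropy term of order $|S|\log\log n$, which is handled by the extra $\log^2$ factors. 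This gives $|S|\le M_4^{s-d}\log^4 n$. The main obstacle is making this union bound rigorous: the clusters are random, and conditioning on their being disconnected from the core is a decreasing event. I would first try a BK/FKG argument: enumerate lattice animals built from short edges, using the degree bound to control the number of animals of size $k$ containing a fixed vertex by $(U\log n\, M_3^d)^k$. If this entropy is too large, I would instead perform a multi-scale exploration with the $\mathcal{F}_j$.

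For property (2), a vertex allocated via $A_k$ is at graph distance $k$ from $\mathcal{G}$, and $k\le M_4^{s-d}\log^4 n$ by (3). We then combine this with $\mathrm{diam}(\mathcal{G}(\Lambda_{M_2}))\le\log^{\Delta_2}(M_2)M_4$ from Proposition~\ref{prop:CoreProps}(4). Since $M_2\ge n^{c\varepsilon}$, every power of $\log n$ is a power of $\log M_2$, so the total is at most $\log^{\Delta_3}(M_2)$ for suitable $\Delta_3$.
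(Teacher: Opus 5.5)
Your overall architecture matches the paper's. The lower bound comes from the core construction, item (3) from an attachment estimate $e^{-c|S|M_4^{d-s}}$, the volume bound from Euclidean containment of allocated vertices (each step $<M_3$, total displacement $\ll M_2$), and item (2) from the core diameter plus the distance to the core. Applying Lemma~\ref{lemma:VolLargeSets} to the deterministic box $\Lambda_{2M_2}$ is a clean way to finish the volume step.

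\textbf{The gap is in item (3)}, on which the other two items rest: as you describe it, it does not close. A union bound over connected shapes cannot work. The per-vertex cost of failing to attach is only $cM_4^{d-s}\to 0$. Any shape count contributes entropy that is also linear in $|S|$, with a coefficient bounded below: your lattice-animal bound $(U\log n\,M_3^d)^k$ gives $k\cdot\Theta(\log n)$, and even the $|S|\log\log n$ you mention is fatal. The requirement $|S|M_4^{d-s}\ge |S|\log\log n+\log^2 n$ has no solution. Inflating the target size by powers of $\log n$ does not help, because both sides are linear in $|S|$, and BK/FKG does not change this.

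The fix is to weight each shape by its probability rather than count shapes. Condition on $\mathcal{F}_*$, which fixes $\mathcal{G}$ and the non-core set $W$, and on the edges among vertices of $W$. This fixes the cluster $K(x)$ of $x$ in $W$. The edges from $W$ to the cores of other $M_4$-boxes are disjoint from everything conditioned on, so they are still independent. Edges from $W$ to the core within the same box are closed by definition of the core. On $\mathcal{O}$, each $y\in W$ sees at least $\rho_1 M_4^d$ core vertices at distance $O(M_4)$. Hence
\[
\mathbf{P}\bigl(|K(x)|\ge k,\ K(x)\text{ has no edge to }\mathcal{G}\bigr)\le\sum_{S\ni x,\ |S|\ge k}\mathbf{P}(K(x)=S)\,e^{-c|S|M_4^{d-s}}\le e^{-ckM_4^{d-s}},
\]
because the events $\{K(x)=S\}$ are disjoint. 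A union bound over the $n^d$ starting points alone then gives $k=O(M_4^{s-d}\log^2 n)$. This is what the paper's ``by independence of the edges \dots{} since this holds for every vertex'' encodes.

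There is a second, related point. Your item (2) and the containment in $\Lambda_{2M_2}$ need a bound on $d_{\mathscr{H}_3}(x,\mathcal{G})$ for allocated $x$. The cluster of such an $x$ is attached to the core, so item (3) as stated does not give this bound. Run the same argument as a breadth-first exploration of $K(x)$, revealing the core edges of each newly found vertex. This shows that one of the first $CM_4^{s-d}\log^2 n$ explored vertices has an edge to $\mathcal{G}$, with failure probability $e^{-c\log^2 n}$.
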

\begin{proof}
Before proceeding let us set up a useful computation. Let $x \in \Lambda_n$; we compute the probability that $x$ connects by an edge to the core $\Lambda_m(x)^c \cap \mathcal{G}_{m}$ when all edges of length at least $m$ have not been revealed yet and $\mathcal{G}_{m}$ has positive density $\eta>0$ in each box. By only looking at neighbouring boxes, there exist constants $c, c' > 0$ such that for $m>0$ large,
\begin{align}
    \mathbf{P}\left( \exists y \in \Lambda_m(x)^c \cap \mathcal{G}_{m} \colon \mathds{1}\{\omega(y, x) = 1\}\right) & = 1 - \prod_{y \in \Lambda_m(x)^c \cap \mathcal{G}_{m}} (1 - \mathbf{p}(x, y)) \nonumber \\
    & \ge 1 - \exp\left( - \sum_{y \in \Lambda_m(x)^c \cap \mathcal{G}_{m}} \mathbf{p}(x, y) \right)\nonumber\\
    & \ge 1 - \exp\left( - c |\Lambda_m| \frac{\beta}{m^{s}} \right)\nonumber\\
    & \ge 1  - \exp\left(- c' \frac{m^d}{m^s}\right).\label{eqn:NonConn}
\end{align}
We will employ this computation (for different $m>0$) repeatedly in this proof and beyond.

The lower bound for the volume follows directly from the construction of the core. Note that the probability that a vertex connects to the core by an edge when revealing all edges longer than $M_4$ is, for any $x$,
    \begin{equation*}
        \mathbf{P}\left( \exists y \in \Lambda_{M_4}(x)^c \cap \mathcal{G}_{M_4} \colon \mathds{1}\{\omega(y, x) = 1\}\right) \ge 1 - e^{-c M_4^{d-s}}.
    \end{equation*}
    Hence, by independence of the edges, the probability that a cluster $\mathscr{C}$ does not connect to the core by an edge is bounded from above, for $n$ large, by
    \begin{equation*}
        \mathbf{P}\left( \not\exists y \in \mathcal{C}^c \cap \mathcal{G}_{M_4}, x \in \mathcal{C} \colon \mathds{1}\{\omega(y, x) = 1\}\right) \le e^{-c M_4^{d-s} |\mathscr{C}|}.
    \end{equation*}
    Since this holds for every vertex, with probability at least $1-e^{-c\log^2 (n)}$ any cluster outside the core is at most of size $O(M_4^{s-d}\log^2(n))$. Hence, the number of steps (in graph distance) that a vertex $x$ takes to connect to the core is at most $M_4^{s-d}\log^2(n)$ with probability, for $n$ large, at least
    \begin{equation*}
        1 - e^{-c\log^2(n)}.
    \end{equation*}
    By Proposition~\ref{prop:CoreProps}, distances within the core are with probability larger than $1 - e^{-c\log^2(n)}$ bounded from above uniformly by $\log^{\Delta_{2}}(M_j) M_4$, yielding the required diameter bound. Together with the fact that sites outside the core connect to it within $M_4^{s-d}\log^2(n)$ steps, we obtain the required diameter bound for the graph after adding longer edges. Observe that these observations together with Lemma~\ref{lem:MaxDeg} imply that the second largest cluster must be smaller than $M_4^{s-d}\log^4(n)$.
    
    For the volume upper bound we proceed similarly. By Lemma~\ref{lem:MaxDeg} the largest degree is bounded from above by $U\log(n) \le \log^2(n)$. Hence, the volume of the sets added to $\Lambda_{M_2}$ is bounded from above by $\log^{\Delta_{1}}(M_j) M_4$. Note that the Euclidean distance between any $x \in \mathcal{G}(3, \Lambda_{M_2})$ and the corresponding $\Lambda_{M_2}$ is at most 
    \begin{equation*}
        M_3M_4^{d-s}\log^2(n).
    \end{equation*}
    We conclude that 
    \begin{equation*}
         |E(\mathcal{G}(3, \Lambda_{M_2}), \Lambda_n)| \le \rho_2 M_2^d + \rho_2 M_2^{d-1} M_3 M_4^{d-s}\log^2(n).
    \end{equation*}
    Choosing the scales so that $M_2 \ge M_3 M_4^{d-s}\log^2(n)$ yields the result by adjusting $\rho_2$.
\end{proof}

We have so far revealed all edges of $\ell_{\infty}$ length at most $M_3$. We now reveal all remaining edges and we need to focus on the small clusters $\mathscr{C}_j, j \in \mathscr{S}$, where $\mathscr{S}$ denotes the finite index set of small clusters once we reveal all edges in $\mathcal{H}_{3}$.

 We immediately observe that, on the event $\mathcal{U}$ of Proposition~\ref{prop:IntermediateAllocationSmalls},
\begin{itemize}
    \item $|\mathscr{C}_j| \le M_4^{s-d} \log^4(n)$ for all $j \in \mathscr{S}$.
    \item $|\mathscr{S}|\le n^d$.
\end{itemize}
We will focus on $\mathscr{C}_j$, $j \in \mathscr{S}$, that connect to $\mathcal{C}(3, \mathcal{G})$. These will be controlled through a comparison to a subcritical branching process. We define an allocation from the set of small clusters $\mathscr{C}_j$, $j \in \mathscr{S}$, to $\mathcal{T}_{M_2}$ (called $\phi$ analogously to \eqref{eqn:Allocation}) and then obtain
\begin{equation*}
    \mathcal{G}(0, \Lambda_{M_2}) \coloneqq \mathcal{G} (3, \Lambda_{M_2}) \bigcup\{x \colon \phi(x) = \Lambda_{M_2} \}.
\end{equation*}
Observe that, once again, the sets $\{\mathcal{G}(0, \Lambda_{M_2})\}_{\Lambda_{M_2} \in \mathcal{T}_{M_2}}$ partition the largest component inside $\mathcal{C}_1^{(n)}$ (now completely revealed).

We summarise the properties of the sets $\{\mathcal{G}(0, \Lambda_{M_2})\}_{\Lambda_{M_2} \in \mathcal{T}_{M_2}}$ in the following proposition:

\begin{prop}\label{prop:BranchingSmalls}
    There exist constants $\rho_1, \rho_2 > 0$ and $c_4>0$ such that for all $n$ large enough there exists an event $\mathcal{B}$ on which the partition $\{\mathcal{G}(0, \Lambda_{M_2})\}_{\Lambda_{M_2} \in \mathcal{T}_{M_2}}$ of $\mathcal{C}_1^{(n)}$ into connected components has the property 
    \begin{equation*}
        \rho_1 M_2^d \le |\mathcal{G}(0, \Lambda_{M_2})| - 1\le|E(\mathcal{G}(0, \Lambda_{M_2}), \Lambda_m| \le \rho_2 M_2^d.
    \end{equation*}
    Also,
    \begin{equation*}
        \mathrm{diam}\left(\mathcal{G}(0, \Lambda_{M_2})\right) \le M_4^{s-d}\log^6(n) + \mathrm{diam}\left(\mathcal{G}(3, \Lambda_{M_2}) \right).
    \end{equation*}
    Moreover,
    \begin{equation*}
        \mathbf{P}\left( \mathcal{B} \mid \mathcal{U}, \mathcal{H}_{M_3} \right) \ge 1 - e^{-c_4 \log^2(n)}.
    \end{equation*}
\end{prop}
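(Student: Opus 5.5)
We condition on $\mathcal{H}_{M_3}$ and work on the event $\mathcal{U}$ of Proposition~\ref{prop:IntermediateAllocationSmalls}. At that point the core $\mathcal{C}(3,\mathcal{G})$ is already partitioned into the connected sets $\mathcal{G}(3,\Lambda_{M_2})$. Each of these satisfies the volume bounds $\rho_1M_2^d\le|\mathcal{G}(3,\Lambda_{M_2})|-1$ and $|E(\mathcal{G}(3,\Lambda_{M_2}),\Lambda_n)|\le\rho_2M_2^d$. The remaining vertices of $\Lambda_n$ form small clusters $\mathscr{C}_j$, $j\in\mathscr{S}$, each of size at most $M_4^{s-d}\log^4(n)$. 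The unrevealed edges are exactly those of $\ell_\infty$-length at least $M_3$, and they are independent of $\mathcal{H}_{M_3}$.

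We reveal these edges by exploring outward from the core, in the manner of a breadth-first search. Generation $0$ consists of the small clusters joined to the core by a direct long edge. Generation $k+1$ consists of the not-yet-explored small clusters joined by a long edge to a cluster of generation $k$. Each small cluster reached is allocated to the box $\Lambda_{M_2}$ of the core vertex, or of the already-allocated cluster, through which it was first reached. Ties are broken by a deterministic rule, exactly as in \eqref{eqn:Allocation}. This makes every $\mathcal{G}(0,\Lambda_{M_2})$ connected and gives a partition of $\mathcal{C}_1^{(n)}$. The lower volume bound is inherited from Proposition~\ref{prop:IntermediateAllocationSmalls}, since sets only grow.

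The core of the argument is a subcritical branching-process domination of the exploration tree hanging off a single core vertex, or off a single small cluster. A cluster $\mathscr{C}$ has $\sum_{x\in\mathscr{C}}\sum_{\|y-x\|\ge M_3}\mathbf{p}(x,y)\le c|\mathscr{C}|M_3^{d-s}$ expected long edges. Since $|\mathscr{C}|\le M_4^{s-d}\log^4(n)$ and $M_3=M_2^{1/2}$ is polynomial in $M$, hence in $n^\varepsilon$, while $M_4$ is only polylogarithmic, the mean offspring is $m=o(1)$. One must be careful that offspring are whole clusters and not single vertices. I would handle this by letting each edge landing in a small cluster bring in the entire cluster, of size at most $M_4^{s-d}\log^4(n)$. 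The progeny of a cluster is then dominated by a Galton--Watson process with Poisson($m$)-type offspring, with every individual weighted by at most $M_4^{s-d}\log^4(n)$ vertices.

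Standard tail bounds for subcritical Galton--Watson trees then give two estimates. First, the depth of any tree exceeds $\log^2 n/\log(1/m)$ with probability at most $e^{-c\log^2 n}$. Second, the total progeny attached to a single core vertex exceeds $\log^2 n$ with probability at most $e^{-c\log^2 n}$. A union bound over the at most $n^d$ roots extends both estimates to all roots simultaneously. Combined with Lemma~\ref{lem:MaxDeg}, which bounds the degrees, they yield the diameter bound: each added vertex lies within $\log(n)\cdot M_4^{s-d}\log^4(n)\le M_4^{s-d}\log^6(n)$ steps of $\mathcal{G}(3,\Lambda_{M_2})$.

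The main obstacle is the upper bound $|E(\mathcal{G}(0,\Lambda_{M_2}),\Lambda_m)|\le\rho_2M_2^d$. The depth and progeny bounds alone do not give it, because they only control the trees vertex by vertex and would lose polylogarithmic factors. Instead I would bound the total number of long edges leaving $\mathcal{G}(3,\Lambda_{M_2})$. By Lemma~\ref{lemma:VolLargeSets}, or a Bernstein bound as in its proof, with mean $c\,M_2^dM_3^{d-s}$, this number is $O(M_2^d M_3^{d-s}+\log^2 n)$ except with probability $e^{-c\log^2n}$. Each such edge seeds a subcritical tree whose weighted size has an exponential tail on scale $M_4^{s-d}\log^4 n$. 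Applying Bernstein's inequality to the sum of these independent tree sizes, the added volume is at most $O(M_2^dM_3^{d-s}M_4^{s-d}\log^4n)+O(\log^{C}n)$, which is $o(M_2^d)$ by the choice of scales in \eqref{eqn:Scales}. The edges incident to the added vertices are handled in the same way, through Lemma~\ref{lemma:VolLargeSets} applied to the added sets. A union bound over the $n^d$ boxes $\Lambda_{M_2}$ gives $\mathbf{P}(\mathcal{B}\mid\mathcal{U},\mathcal{H}_{M_3})\ge1-e^{-c_4\log^2 n}$.
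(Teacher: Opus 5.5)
Your proposal is correct and follows essentially the paper's route. You dominate the small clusters attached through edges of length at least $M_3$ by a subcritical branching process with mean offspring of order $M_4^{s-d}\,\mathrm{polylog}(n)\,M_3^{d-s}$, which gives uniform $\log^2(n)$ bounds on the trees and hence the diameter bound. You then concentrate the number of long edges leaving each $\mathcal{G}(3,\Lambda_{M_2})$ and multiply by the tree volume, as the paper does.

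The differences are minor. The paper simply multiplies that edge count by the worst-case tree volume, controlling edges through the maximal degree of Lemma~\ref{lem:MaxDeg}. You should do the same, since Lemma~\ref{lemma:VolLargeSets} is stated for deterministic sets and the added sets are random. Your Bernstein step over tree sizes is unnecessary, because the polynomially small factor $M_3^{d-s}$ already absorbs all polylogarithmic losses.
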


\begin{proof}
    We will compare the number of $\mathscr{C}_j, j =1, \dots, m$ added to the already existing sets $\mathcal{G}(3, \Lambda_{M_2})$ to a subcritical branching process.

    Let 
    \begin{equation*}
        \hat{\mathrm{deg}}(\mathscr{C}) = |\{\{x, y\} \colon x \in \mathscr{C}, y \in \mathscr{C}', \|x - y\|_\infty \ge M_3 \text{ for some }\mathscr{C}' \neq \mathscr{C} \}|.
    \end{equation*}
    For each $\mathscr{C}_j \in \mathscr{S}$, 
    \begin{equation*}
        \mathbf{P}\left( \hat{\mathrm{deg}}(\mathscr{C}) = k \mid \mathcal{U}, \mathcal{H}_{M_3}\right) \le \frac{|\mathscr{C}|^k}{k!} M_{3}^{k(d-s)}.
    \end{equation*}
    We now analyse the connected components of the graph that records the connections between $\mathscr{C}_j, \mathscr{C}_i  \in \mathscr{S}$. Since we are only revealing edges longer than $M_3$, by the observations made a priori on $\mathscr{C}_j, j \in \mathscr{S},$ we can compare them to a branching process where the offspring distribution has mean 
    \begin{equation*}
        \eta_n = M_4^{s-d}\log^2(n)M_3^{d-s}.
    \end{equation*}
    By the choice of $M_3$ made in \eqref{eqn:Scales} we can bound from above $1> \eta > \eta_n$ uniformly. Hence, the branching process is subcritical and, a fortiori, we obtain for some constant $c>0$ and $n$ large
    \begin{equation*}
        \mathbf{P}\left(|\tilde{\mathcal{C}}(\mathscr{C})| > \log^2(n)\mid \mathcal{U}, \mathcal{H}_{M_3} \right) \le e^{-c\log^2(n)},
    \end{equation*}
    where $\tilde{\mathcal{C}}(\mathscr{C})$ denotes the connected component of $\mathscr{C} \in \mathscr{S}$ (where we join small clusters through long edges). Taking a union bound over all at most $n^d$ clusters, for $n$ large enough 
    \begin{equation*}
        \mathbf{P}\left(\text{there exists } \mathscr{C} \in \mathscr{S}\text{ s.t. }|\tilde{\mathcal{C}}(\mathscr{C})| > \log^2(n)\mid \mathcal{U}, \mathcal{H}_{M_3} \right)\le e^{-c\log^2(n)}.
    \end{equation*}
    We observe that this implies that the number of steps it takes a vertex $x \in \mathscr{C}$ to reach the $\mathcal{G}(0, \Lambda_{M_2})$ it is associated to is bounded from above by $\sup_{\mathscr{S}}|\mathscr{C}| \log^2(n)$, and this yields the required result.
    
    We now focus on the volume. Since the maximal degree of a vertex is uniformly bounded from above by $\log^2(n)$ (cfr.\ Lemma~\ref{lem:MaxDeg}), the volume of the connected components of vertices that were in some $\mathscr{C} \in \mathscr{S}$ before revealing long edges are bounded, with the required probability, by 
    \begin{equation*}
        M_4^{s-d} \log^8(n).
    \end{equation*}
    We are only left with the task of excluding that too many small clusters are allocated to the same box $\Lambda_{M_2}$. Observe that in the procedure above we still have to reveal the edges between the clusters $\mathscr{C}$ and $\mathcal{G}(3, \Lambda_{M_3})$.

    We will require a large deviation estimate on the number of long edges emanating from a box, and then conclude by taking a union bound over the number of boxes.

    Let
    \begin{equation*}
        Y \equiv Y_{\mathcal{G}(3, \Lambda_{M_2})} \coloneq \sum_{\substack{x \in \mathcal{G}(3, \Lambda_{M_3})\\y \in \mathcal{G}^c, \|x-y\|_\infty \ge M_3}} \mathds{1}\{\omega(\{x, y\}) = 1\},
    \end{equation*}
    be the random variable encoding the number of long edges attached to the giant component $\mathcal{G}(3, \Lambda_{M_2})$ inside $\Lambda_{M_2}$.

    Note that for all $x, y$ as above with $\|x-y\|_\infty = r$ we have that $\mathds{1}\{\omega(x, y) = 1\} \sim \mathrm{Ber}(1-\exp{\{-\beta r^{-s}}\}) \approx \mathrm{Ber}(\beta r^{-s}\})$. These are all independent of each other and conditionally independent of all edges we already revealed; hence their centred sum is a martingale with bounded increments. We denote by $\mu_Y$ and $\sigma_Y$ the mean and standard deviation of $Y$ conditional on $\mathcal{U}, \mathcal{H}_{M_3}$. Applying a standard exponential concentration inequality for martingales with bounded increments we can write for any $\lambda, t >0$,
    \begin{equation}\label{eqn:ExpMartingale}
        \mathbf{P}\left(|Y - \mu_Y| > M_2^\lambda \sigma_y\right) \le e^{-tM_2^\lambda}e^{t^2/2}.
    \end{equation}
    Since
    \begin{equation*}
        \mu_Y \le C_1 M_3^{d-s}M_2^d,
    \end{equation*}
    by a trivial bound for Bernoulli random variables,
    \begin{equation*}
        \sigma^2_Y \le C_2 M_3^{d-s}M_2^d.
    \end{equation*}
    Inserting these values into \eqref{eqn:ExpMartingale}, optimising over $t>0$ and choosing $\lambda>1$ so that $M_2^{2\lambda} = M_3^{d-s}M_2^d$, we obtain
    \begin{equation}
        \mathbf{P}\left(|Y - \mu_Y| > M_2^\lambda \sigma_y\right) \le e^{-M_2^{2\lambda}/2}.
    \end{equation}
    Taking a union bound over $\mathcal{T}_{M_2}$ we obtain that
    \begin{equation*}
        \mathbf{P}\left( \exists \Lambda_{M_2} \colon Y_{\mathcal{G}(3, \Lambda_{M_2})} \ge  2(C_1+C_2) M_3^{d-s}M_2^d \right) \le \left(\frac{M}{M_2}\right)^d e^{-M_2^{2\lambda}/2}.
    \end{equation*}
   Recalling  that $M_2 = \log^{\xi_2}(n)$, for $\xi_2>0$ chosen large enough, we obtain for $n$ large
    \begin{equation*}
        \mathbf{P}\left( \exists \Lambda_{M_2} \colon Y_{\mathcal{G}(3, \Lambda_{M_2})}\ge  2(C_1+C_2) M_3^{d-s}M_2^d \right)\le e^{-c\log^2(n)}.
    \end{equation*}
    Finally, with probability larger than $1 - \exp(-c\log^2(n))$ we showed that the total number of vertices allocated to any $\mathcal{G}(3, \Lambda_{M_2})$ in this final allocation step is less than
    \begin{equation*}
        2(C_1+C_2)M_2^d  M_3^{d-s}M_4^{d-2}\log^6(n) \le \rho_2M_2^d,
    \end{equation*}
    as $s>d$ and $M_3$ is chosen large w.r.t.\ $M_4$. This concludes the proof.
\end{proof}

\subsection{The allocation theorem for $s \ge 2d$}

\subsubsection{Construction of the core}

We will define successive refinements of the allocation procedure as in the case $d<s<2d$. The main difference is that we need to use different scales, and that the diameter bound is not true in this regime. In some sense, the allocation is more natural in this regime, as the geometry induced by the graph is much closer to the Euclidean geometry (particularly for $s>2d$, see \cite{BaumlerShapeTheo}). The crucial input from Section \ref{sect:Trunc} is the fact that the giant component occupies a positive density of the sites in large enough boxes.

Let $0<\xi_1 < \xi_2$ be two positive constants whose precise value will be set along the proof. We define $n \ge M(n) = M_1 \ge 1$, and 
\begin{equation*}
    M_2 \coloneqq \lfloor \log^{\xi_2}(n) \rfloor, \qquad M_3 \coloneqq \log^{\xi_3}(n), \qquad M_4 \coloneqq \lfloor \log^{\xi_1}(n) \rfloor. 
\end{equation*}
As before, we will choose $M_1$ so that $M_1 \gg M_2 \gg M_3 \gg M_4$.

We begin by defining the core inside a box $\Lambda_{M_4}(x), x \in M_4\mathbb{Z}^d$ as the largest connected component of the LRP truncated at length $N$. Observe that by Proposition~\ref{prop:Volume},  by choosing $\xi_1>0$ large enough, with probability larger than $1- \exp(-c\log(n)^2)$, these clusters have positive density in each $\Lambda_{M_4} \in \mathcal{T}_{M_4}(\Lambda_n)$. More precisely, with probability larger than $1- \exp(-c\log(n)^2)$, there exists $\rho_1>0$ such that $|\mathcal{G}(\Lambda_{M_4})| \ge \rho_1 M_4^d$ in every $\Lambda_{M_4} \in \mathcal{T}_{M_4}(\Lambda_n)$. We then reveal all edges between the vertices contained in $\{\mathcal{G}(\Lambda_{M_4})\}_{\Lambda_{M_4} \in \mathcal{T}_{M_4}(\Lambda_n)}$. 
We observe that with probability larger than $1- \exp(-c\log(n)^2)$ the second largest connected component is smaller than $M_4$ by \eqref{eqn:SecondLargTrunc}, and hence for $\xi_1>0$ large enough the sets $\mathcal{G}(\Lambda_{M_4})$ are also part of the largest connected component in $\Lambda_n$ (in the $N$-truncated LRP) and hence they must be connected. The core has the properties that
    \begin{enumerate}
        \item $|\mathcal{C}_1^{(n), N}(\Lambda_{M_4}) \cap \Lambda_{M_4}| \ge \rho_1 M_4^{d}$ for all $\Lambda_{M_4} \in \mathcal{T}_{M_4}$.
        \item $|E(\mathcal{C}_1^{(n), N}(\Lambda_{M_4}), \Lambda_n))| \le \rho_2 M_4^d$ for all $\Lambda_{M_4} \in \mathcal{T}_{M_4}$.
    \end{enumerate}
We define
\begin{equation*}
    \mathcal{G}_{4} \coloneqq \bigcup_{\Lambda_{M_4} \in \mathcal{T}_{M_4}} \mathcal{G}(\Lambda_{M_4}).
\end{equation*}
    
\subsubsection{Allocating small clusters to the core}

The allocation will be split into two steps. Firstly, we notice that for any vertex $v$ inside $\Lambda_{M_4}$ the probability that it connects by an edge to the core when revealing the edges (longer than $N$) of length up to $M_3=\log^{\xi_3}(n)$ with $\xi_3 > \xi_1$ is, by only looking at the edges of length less than $5\log^{\xi_1}(n)>N$, for some $c>0$ at least 
\begin{align*}
    1 - \exp \left( c \frac{\log^{d \xi_1}(n)}{\log^{\xi_1 s}(n)} \right).
\end{align*}
Hence, the probability to create a component which is larger than $\log^{\xi_4}(n)$ that does not connect to the core is, by independence, less than
\begin{equation}\label{eqn:DistancesLarge}
    \exp \left( - c \frac{\log^{d \xi_1 + \xi_4}(n)}{\log^{\xi_1 s}(n)} \right).
\end{equation}
Here, the crucial point is that we can choose $\xi_4>0$ large enough without the need to commit to a choice for $\xi_3>0$. We can now fix $\xi_4>0$ so that the second largest component is smaller than $\log^{\xi_4}(n)$ with probability larger than $1- \exp(-c\log^2(n))$. Let us now comment on the Euclidean distance between vertices $v$ that connect to the core via this new revealment of edges and the core $\mathcal{G}_{4}$. We showed that for any $v$, by exploring its connected component, we find that every $\log^{\xi_4}(n)$ number of steps outside $\mathcal{G}_4$ there must exist one edge that connects directly to the core. Furthermore, all edges are of length at most $\log^{\xi_3}(n)$ and hence the Euclidean distance between $v$ and the closest vertex in the core bounded above by $\log^{\xi_4+\xi_3}(n)$. 

We now fix $\xi_2 = \xi_4 + \xi_3 + 10d$. 

Similarly to the previous subsection we construct an allocation function $\phi_1$ that associates to a vertex $x$ a box in $\mathcal{T}_{M_2}$. We define the allocations at this stage in the exact same way that we did in Section~\ref{sect:SmallToCoresSmall}. As a result we obtain components $\{\mathcal{G}(3, \Lambda_{M_2})\}_{\Lambda_{M_2} \in \mathcal{T}_{M_2}}$.

We collect the properties of these sets in the following result. Recall the definitions of the $\sigma$-field given in \eqref{eqn:SigmaH}, \eqref{eqn:SigmaF} that can be naturally extended to the present context. Furthermore, for $j = 1, 2, 3, 4$ let $\mathcal{C}(j, \mathcal{G}_{4})$ be the connected component of $\mathcal{G}_{4}$ once all edges in $\mathcal{H}_{j}$ have been revealed.

\begin{prop}\label{prop:SecondAllocEucl}
    There exist constants $\rho_1, \rho_2 > 0$ and $\Delta_5, c_5>0$ such that for all $n$ large enough there exists an event $\mathcal{E}$ measurable w.r.t.\ $\mathcal{H}_{M_3}$ on which the partition $\{\mathcal{G}(3, \Lambda_{M_2})\}_{\Lambda_{M_2} \in \mathcal{T}_{M_2}}$ of $\mathcal{C}(3, \mathcal{G}_{4})$ into connected components has the property 
    \begin{equation*}
        \rho_1 M_2^d \le |E(\mathcal{G}(3, \Lambda_{M_2}), \Lambda_n)| \le \rho_2 M_2^d, \qquad \sup_{x \in \mathcal{C}(3, \mathcal{G}_{4})}d_{\mathcal{G}(3, \Lambda_{M_2})}(x, \mathcal{G}_4) \le \log^{\Delta_5}(n).
    \end{equation*}
    Moreover,
    \begin{equation*}
        \mathbf{P}\left( \mathcal{E} \mid \mathcal{F}_2 \right) \ge 1 - e^{-c_5 \log^2(n)}.
    \end{equation*}
\end{prop}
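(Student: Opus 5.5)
We will mirror the proof of Proposition~\ref{prop:IntermediateAllocationSmalls}, replacing the diameter control available for $s<2d$ by the Euclidean localisation established just before the statement. Throughout we work on the intersection of three events. The first is the core event from the previous paragraphs: $|\mathcal{G}(\Lambda_{M_4})|\ge \rho_1 M_4^d$ in every $\Lambda_{M_4}\in\mathcal{T}_{M_4}$, and $\mathcal{G}_4$ is contained in the giant of the $N$-truncated model. The second is the event of Lemma~\ref{lem:MaxDeg}, that $\max_{x\in\Lambda_n}\deg^\omega(x)\le U\log n$. The third is the event from \eqref{eqn:DistancesLarge}, that no component of size at least $\log^{\xi_4}(n)$ avoids the core after revealing the edges in $\mathcal{H}_3$. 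Each event fails with probability at most $e^{-c\log^2 n}$, and so does their union. These events concern only edges of length below $M_3$ and the core, which gives the required conditional bound given $\mathcal{F}_2$. We define $\mathcal{E}$ as the intersection of these events with the volume event described below.

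\textbf{Distance bound.} Let $x\in\mathcal{C}(3,\mathcal{G}_4)\setminus\mathcal{G}_4$. The allocation defined as in Section~\ref{sect:SmallToCoresSmall} assigns $x$ to the box of a core vertex reached by a geodesic of $\mathscr{H}_3$ through layers $A_k$. It therefore suffices to show $d_{\mathscr{H}_3}(x,\mathcal{G}_4)\le\log^{\xi_4}(n)$. The component of $x$ in $\mathscr{H}_3$ with the core removed has size below $\log^{\xi_4}(n)$ on the third event, and it is attached to the core by an edge. Hence the path from $x$ to the core has at most $\log^{\xi_4}(n)+1$ steps, and all its vertices are allocated along the same layered path. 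This gives $\Delta_5=\xi_4+1$.

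\textbf{Volume lower bound.} This is immediate: $\mathcal{G}(3,\Lambda_{M_2})\supseteq\mathcal{G}\cap\Lambda_{M_2}$ contains $(M_2/M_4)^d$ cores, each of size at least $\rho_1M_4^d$, and each vertex contributes at least one edge.

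\textbf{Volume upper bound.} This is the main step. Each allocated vertex lies within graph distance $\log^{\xi_4}(n)$ of the core along edges of length at most $M_3$. So its Euclidean distance to $\Lambda_{M_2}$ is at most $\log^{\xi_3+\xi_4}(n)\le M_2\log^{-10d}(n)$ by the choice $\xi_2=\xi_4+\xi_3+10d$. Hence every vertex of $\mathcal{G}(3,\Lambda_{M_2})$ lies in the enlarged box $\Lambda_{2M_2}(\cdot)$ with the same centre, and $|\mathcal{G}(3,\Lambda_{M_2})|\le (2M_2)^d$. We then bound $|E(\mathcal{G}(3,\Lambda_{M_2}),\Lambda_n)|$ by the sum of degrees over the deterministic set $\Lambda_{2M_2}$. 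Lemma~\ref{lemma:VolLargeSets} with $A=\Lambda_{2M_2}$ and $\mu$ large gives
\begin{equation*}
\sum_{x\in\Lambda_{2M_2}}\deg^\omega(x)\le \mu 2^dM_2^d
\end{equation*}
outside probability $e^{-c(\mu)M_2^d}$. A union bound over the at most $n^d$ boxes costs $n^de^{-cM_2^d}\le e^{-c\log^2 n}$ since $M_2$ is polylogarithmic with large exponent. This yields $\rho_2=\mu2^d$.

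The delicate point is conditioning. Lemma~\ref{lemma:VolLargeSets} is stated unconditionally, while the statement requires a bound conditional on $\mathcal{F}_2$. We avoid this by including the degree event, which is unconditional with probability at least $1-e^{-c\log^2 n}$, in $\mathcal{E}$. The conditional statement then follows by Markov's inequality on $\mathbf{P}(\mathcal{E}^c\mid\mathcal{F}_2)$, possibly after halving $c_5$ and restricting to an $\mathcal{F}_2$-event of comparable probability. If a genuinely conditional bound is required, we would instead repeat the Bernstein argument of Lemma~\ref{lemma:VolLargeSets} for the unrevealed edges, which are independent Bernoulli variables. The revealed short edges are handled by the maximal degree bound, which gives at most $U\log n\cdot\log^{\xi_4}(n)$ per attached cluster; this is negligible against $M_2^d$.
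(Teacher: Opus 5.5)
Your overall route is the paper's. You localise allocated vertices in Euclidean distance using $\xi_2=\xi_4+\xi_3+10d$, then apply Lemma~\ref{lemma:VolLargeSets} with a union bound over boxes. Your enlarged box $\Lambda_{2M_2}$ is equivalent to the paper's summation over the $*$-adjacent $M_2$-boxes, and the lower bound comes from the core density in both.

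\textbf{The gap: the distance bound is misjustified.} The whole volume upper bound rests on this step. The event you import from \eqref{eqn:DistancesLarge} only controls clusters that never reach $\mathcal{G}_4$. The cluster of $x$ in $\mathscr{H}_3$ with $\mathcal{G}_4$ removed does reach $\mathcal{G}_4$; that is why $x\in\mathcal{C}(3,\mathcal{G}_4)$. So that event says nothing about its size, and nothing in the paper bounds it: it may be large and touch the core in many places.

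What is needed, and what the paper's discussion before the proposition uses (``every $\log^{\xi_4}(n)$ steps outside $\mathcal{G}_4$ there is an edge to the core''), is an exploration statement:
\begin{itemize}
\item Explore from $x$ breadth-first, revealing only edges between non-core vertices.
\item The edges from explored vertices to the cores of neighbouring $M_4$-boxes are revealed neither by this exploration nor by $\mathcal{F}_*$. So, independently, each explored vertex has such an edge with probability at least $p_n\asymp\log^{-(s-d)\xi_1}(n)$.
\item Hence the first $\log^{\xi_4}(n)$ explored vertices all miss the core with probability at most $(1-p_n)^{\log^{\xi_4}(n)}$. If the exploration stops earlier, some explored vertex is adjacent to $\mathcal{G}_4$, because $x\in\mathcal{C}(3,\mathcal{G}_4)$.
\item A union bound over the $n^d$ starting points gives $d_{\mathscr{H}_3}(x,\mathcal{G}_4)\le\log^{\xi_4}(n)+1$ for all $x$, once $\xi_4>(s-d)\xi_1+2$.
\end{itemize}
A union bound over all connected sets of size $\log^{\xi_4}(n)$ would not work here. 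Since $p_n\to 0$ while the number of such sets grows exponentially in their size, the bound blows up. Running one exploration per starting vertex is what makes the argument go through.

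\textbf{Smaller points.} Your Markov-inequality treatment of the conditioning on $\mathcal{F}_2$ is sound; the paper does not address the conditioning at all. The fallback in your last sentences is not sound, however. Bounding revealed edges through the maximal degree is negligible per attached cluster. But a box can receive of order $M_2^d$ allocated vertices, so the total is of order $M_2^d\log n$. That is precisely the polylogarithmic loss the allocation is designed to avoid. On a good $\mathcal{F}_2$-event, the revealed edges should also be controlled by Lemma~\ref{lemma:VolLargeSets}.

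Also, the degree events you put into $\mathcal{E}$ involve edges of all lengths, so they are not $\mathcal{H}_{M_3}$-measurable as you assert. Since $|E(\mathcal{G}(3,\Lambda_{M_2}),\Lambda_n)|$ itself depends on long edges, this is a defect of the statement, shared by the paper's proof, rather than of your argument.
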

\begin{proof}
    The connectedness and distance bound follow directly from the construction and discussion around \eqref{eqn:DistancesLarge}. The volume lower bound follows from the volume lower bound of the largest component of the truncated percolation, see Proposition~\ref{prop:Volume}. We are left with the task of proving the volume upper bound. By the choice we made for $\xi_2$ we know from Lemma~\ref{lemma:VolLargeSets} and a union bound that each box has volume at most
    \begin{equation*}
        |E(\mathcal{G}(3, \Lambda_{M_2}), \Lambda_n)| \le \rho_2 M_2^d.
    \end{equation*}
    We observe that, on $\mathcal{E}$, a vertex $x$ is associated to a box which is at most at Euclidean distance $\log^{\xi_4 + \xi_3}(n)$ from its location. However, the side length of $M_2$ boxes is $\log^{\xi_2}(n)$ with $\xi_2 = \xi_4 + \xi_3 + 10d$. Hence, each box in the partition does not get assigned any vertex further than in the $4^d$ boxes that are $*$-adjacent to it. The total volume is then bounded readily by 
    \begin{equation*}
        |E(\mathcal{G}(3, \Lambda_{M_2}), \Lambda_n)| \le \rho_2 M_2^d + 4^d \rho_2 M_2^d.
    \end{equation*}
    This is enough to conclude by adjusting the value of $\rho_2>0$.
\end{proof}
In the final allocation phase we reveal all remaining edges, i.e.\ the ones that are longer than $M_3$ and that were not between vertices inside $\mathcal{C}(3, \mathcal{G}_{4})$. Let $\mathscr{C}_j, j \in \mathscr{S}$, where $\mathscr{S}$ denotes the finite index set of small clusters once we reveal all edges in $\mathcal{H}_{M_3}$. We immediately observe that
\begin{itemize}
    \item $|\mathscr{C}_j| \le \log^{\xi_4}(n)$ for all $j \in \mathscr{S}$.
    \item $|\mathscr{S}|\le n^d$.
\end{itemize}
We define an allocation function in the same way we did in the previous allocation step, using graph distance with a deterministic rule to break ties.

\begin{prop}
    There exist constants $\rho_1, \rho_2 > 0$ and $\Delta_6, c_6>0$ such that for all $n$ large enough there exists an event $\mathcal{B}$ on which the partition $\{\mathcal{G}(0, \Lambda_{M_2})\}_{\Lambda_{M_2} \in \mathcal{T}_{M_2}}$ of $\mathcal{C}(0, \mathcal{G}_4)$ into connected components has the property 
    \begin{equation*}
        \rho_1 M_2^d \le |E(\mathcal{G}(0, \Lambda_{M_2}),\Lambda_n)| \le \rho_2 M_2^d, \qquad \sup_{x \in \mathcal{C}(0, \mathcal{G}_{4})}d_{\mathcal{G}(0, \Lambda_{M_2})}(x, \mathcal{G}_4) \le \log^{\Delta_6}(n).
    \end{equation*}
    Moreover,
    \begin{equation*}
        \mathbf{P}\left( \mathcal{B} \mid \mathcal{E}, \mathcal{H}_{M_3} \right) \ge 1 - e^{-c_6 \log^2(n)}.
    \end{equation*}
\end{prop}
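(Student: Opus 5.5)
We follow the proof of Proposition~\ref{prop:BranchingSmalls}, with the scales adapted to the regime $s \ge 2d$; the argument is in fact simpler here. We work conditionally on $\mathcal{H}_{M_3}$ and on the event $\mathcal{E}$ of Proposition~\ref{prop:SecondAllocEucl}. On this event every small cluster satisfies $|\mathscr{C}_j| \le \log^{\xi_4}(n)$ and $|\mathscr{S}| \le n^d$. The edges not yet revealed all have $\ell_\infty$-length at least $M_3 = \log^{\xi_3}(n)$. Conditionally, these edges are independent of everything revealed so far.

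\textbf{Step 1: small clusters merge only into short chains.} We build an auxiliary graph whose vertices are the small clusters, joining $\mathscr{C}$ and $\mathscr{C}'$ if some long edge connects them. The number of long edges leaving a cluster $\mathscr{C}$ is stochastically dominated by a Poisson variable with mean at most $c|\mathscr{C}|\, M_3^{d-s}$. This quantity is bounded by $c\log^{\xi_4 + \xi_3(d-s)}(n)$, which tends to $0$ once we choose $\xi_3 > \xi_4/(s-d)$. Note that $\xi_4$ was fixed without committing to $\xi_3$, so this choice is possible; we then set $\xi_2 = \xi_4 + \xi_3 + 10d$ as before. Comparing the exploration of the auxiliary graph with a subcritical branching process gives
\begin{equation*}
\mathbf{P}\left(|\tilde{\mathcal{C}}(\mathscr{C})| > \log^2(n) \mid \mathcal{E}, \mathcal{H}_{M_3}\right) \le e^{-c\log^2(n)},
\end{equation*}
and a union bound over the at most $n^d$ clusters preserves this bound. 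Each cluster in $\mathcal{C}(0,\mathcal{G}_4)$ that is not yet in the core is therefore attached to $\mathcal{C}(3,\mathcal{G}_4)$ through a chain of at most $\log^2(n)$ small clusters, each of size at most $\log^{\xi_4}(n)$. It follows that the graph distance from any $x$ to $\mathcal{C}(3,\mathcal{G}_4)$ is at most $\log^{\xi_4+2}(n)$. Adding the bound $\log^{\Delta_5}(n)$ from Proposition~\ref{prop:SecondAllocEucl} gives the distance estimate with $\Delta_6 = \max(\Delta_5, \xi_4+2)+1$. Connectedness of each $\mathcal{G}(0,\Lambda_{M_2})$ follows because the allocation is done by graph distance, exactly as in Section~\ref{sect:SmallToCoresSmall}.

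\textbf{Step 2: no box receives too much extra volume.} The lower bound on the volume is inherited from $\mathcal{G}(3,\Lambda_{M_2}) \subseteq \mathcal{G}(0,\Lambda_{M_2})$. For the upper bound, let $Y_{\Lambda_{M_2}}$ be the number of long edges (length at least $M_3$) emanating from $\mathcal{G}(3,\Lambda_{M_2})$. Its conditional mean is at most $C M_2^d M_3^{d-s}$. A Bernstein (or bounded-difference martingale) inequality gives
\begin{equation*}
Y_{\Lambda_{M_2}} \le 2C M_2^d M_3^{d-s}
\end{equation*}
with probability at least $1-\exp(-cM_2^d M_3^{d-s})$. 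This failure probability is at most $\exp(-c\log^2 n)$ because $\xi_2 d > \xi_3(s-d)$; this has to be checked against the choice of $\xi_2$, and if necessary $\xi_2$ can be enlarged. A union bound over the at most $n^d$ boxes then applies. Each long edge brings in at most one tree of small clusters. By Step 1 and Lemma~\ref{lem:MaxDeg}, such a tree carries at most $\log^{\xi_4+2}(n)\cdot U\log(n)$ edges. The added volume is therefore at most $C M_2^d M_3^{d-s}\log^{\xi_4+4}(n)$, which is $\le M_2^d$ once $\xi_3(s-d) > \xi_4+4$. Together with the bound on $\mathcal{G}(3,\Lambda_{M_2})$ this gives $\rho_2 M_2^d$ after adjusting $\rho_2$.

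\textbf{Main obstacle.} The delicate point is the dependence between Step 1 and Step 2. Small clusters may also attach directly to the core through long edges, and the trees attached at different entry points must be counted without double counting. We handle this by revealing edges in an exploration order: first the long edges incident to the core sets $\mathcal{G}(3,\Lambda_{M_2})$, then the edges among small clusters. Each small cluster is assigned to the first tree that reaches it. The branching bound then dominates every tree uniformly, whatever its entry point, and the consistency of the exponents $\xi_i$ is the only remaining bookkeeping.
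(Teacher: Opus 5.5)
Your proposal is correct and takes essentially the same route as the paper. First, the long-edge connections among small clusters are compared with a subcritical branching process of offspring mean $\log^{\xi_4+\xi_3(d-s)}(n)$; then the number of long edges leaving each $\mathcal{G}(3,\Lambda_{M_2})$ is bounded by a large-deviation estimate and multiplied by the maximal volume of an attached tree, as in Proposition~\ref{prop:BranchingSmalls}. Your explicit exponent conditions are an improvement: $\xi_3(s-d)>\xi_4+4$, and $\xi_2$ enlarged if necessary so that $M_2^dM_3^{d-s}\ge\log^2(n)$. They make precise what the paper only summarises as ``as $s>2d$'', and they show the argument also covers $s=2d$.
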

\begin{proof}
     As we already did in Proposition~\ref{prop:BranchingSmalls}, we will compare the number of $\mathscr{C}_j, j \in \mathscr{S}$ added to the already existing sets $\mathcal{G}(3, \Lambda_{M_2})$ to a subcritical branching process. We only describe the necessary adaptations.
     
     Observe that, while we have fixed $\xi_1, \xi_2>0$, we have not fixed yet a value for $\xi_3>0$; this will be done along the proof. The branching process associated to the connections between sets $\mathscr{C}_j, \mathscr{C}_i  \in \mathscr{S}$ has mean offspring
     \begin{equation*}
       \eta_n = \log^{\xi_4}(n) \log^{(d-s) \xi_3}(n).
    \end{equation*}
    We can then follow almost verbatim the steps already showed within the proof of Proposition~\ref{prop:BranchingSmalls} and ultimately obtain, with probability larger than $1 - \exp(-c\log^2(n))$, that the total number of vertices allocated to any $\mathcal{G}(3, \Lambda_{M_2})$ in this final allocation step is less than
    \begin{equation*}
        \log^4(n) \log^{\xi_4}(n) \log^{\xi_3(d-s)}(n)\log^{\xi_2 d}(n) \le \rho_2\log^{\xi_2 d}(n),
    \end{equation*}
    as $s>2d$. This concludes the proof.

\end{proof}

\section{The isoperimetric inequality and the upper bound on the mixing time}\label{sect:UB}

Theorem \ref{thm:LBIsoGen} will be established by a case distinction between sets, based on their geometric properties. 

\subsection{Large sets in the truncated giant}

We first deal with large sets in the truncated model. Recall that $\mathcal{C}_1^{(n)}$ is the largest component of the full long-range percolation model while $\mathcal{C}_1^{(n), N}$ is the one of the $N$-truncated percolation. We then have the following result:

\begin{lem}
There exist constants $c, c_{\mathrm{iso}}, \zeta>0$ such that for all $n$ large enough, with $\mathbf{P}$-probability larger than $1 - \exp(-c\log^2(n))$, the following property holds: for all sets $A \subseteq \mathcal{C}_1^{(n)}$ with the property that 
%\dmc{Am I missing something? I don't think we can assume this. If the other two cases below do not happen, we only know that at least say $1/2$ of $|E(A,\Lambda_n)|$ is contained in the giant of the truncated model. But I suppose you can repair the proof with looking only at $A'\subseteq A$ that is already contained in the truncated model, which gives these edges? Please change it if you agree} 
$c_{b} \log(n)^{\zeta} \le |E(A, \Lambda_n)| \le |E(\mathcal{C}_1^{(n)}, \Lambda_n)|/2$ and that at least $1/2$ of the edges of the set $E(A,\Lambda_n)$ is contained in $E(\mathcal{C}_1^{(n), N}, \Lambda_n)$, we have 
    \begin{equation*}
        |E(A, A^c)| \ge c_{\mathrm{iso}} |E(A, \Lambda_n)|^{\frac{d-1}{d}}.
    \end{equation*}
\end{lem}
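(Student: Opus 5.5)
The plan is to reduce to Proposition~\ref{prop:IsoTruncated} applied to the trace of $A$ on the truncated giant. Set $A' \coloneqq A \cap \mathcal{C}_1^{(n),N}$. By hypothesis, at least half of the edges of $E(A,\Lambda_n)$ have an endpoint in $\mathcal{C}_1^{(n),N}$, hence in $A'$, so
\[
|E(A',\Lambda_n)| \ge \tfrac12 |E(A,\Lambda_n)| \ge \tfrac{c_b}{2}\log(n)^{\zeta}.
\]
By Lemma~\ref{lem:MaxDeg}, with probability at least $1-\exp(-c\log^2 n)$ all degrees in $\Lambda_n$ are at most $U\log n$. Therefore $|A'| \ge |E(A',\Lambda_n)|/(U\log n) \ge c\log(n)^{\zeta-1}$, and choosing $\zeta > 1 + \tfrac{d}{d-1}$ places $A'$ above the lower threshold $c_b\log(n)^{d/(d-1)}$ of Proposition~\ref{prop:IsoTruncated}.

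For the upper threshold, we need $A'$ to be at most half of the truncated giant. By Proposition~\ref{prop:Volume} and Theorem~\ref{theo:Baumler}, $|\mathcal{C}_1^{(n),N}| \ge \eta n^d$ with $\eta$ close to $\theta$. Lemma~\ref{lemma:VolLargeSets} with a large constant $\mu$, combined with the hypothesis $|E(A,\Lambda_n)| \le |E(\mathcal{C}_1^{(n)},\Lambda_n)|/2$, controls $|A'|$ by a constant fraction of $n^d$. If that fraction exceeds $1/2$ of the truncated giant, I would pass to the complement $\mathcal{C}_1^{(n),N}\setminus A'$, which has the same edge boundary inside the truncated giant and comparable volume. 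This is the step I expect to be the main obstacle. The linear-volume regime has to be handled carefully: $|E(A,\Lambda_n)|\le \frac12|E(\mathcal{C}_1^{(n)},\Lambda_n)|$ does not immediately give $|A'|\le \frac12|\mathcal{C}^{(n),N}_1|$. When $|A'|$ is of order $n^d$, either $A'$ or its complement has size between $c n^d$ and half the giant, and the isoperimetric bound of order $n^{d-1}$ from Proposition~\ref{prop:IsoTruncated} (applied to the smaller of the two) suffices, since then $|E(A,\Lambda_n)|^{(d-1)/d}\le C n^{d-1}$ by Lemma~\ref{lemma:VolLargeSets}. I would also read the bound ``$|A|\le n/2$'' in Proposition~\ref{prop:IsoTruncated} as ``half of the giant''.

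Proposition~\ref{prop:IsoTruncated} then gives
\[
|E_N(A',\mathcal{C}_1^{(n),N}\setminus A')| \ge c_\ell |A'|^{(d-1)/d}.
\]
Each such edge joins a vertex of $A'\subseteq A$ to a vertex of $\mathcal{C}_1^{(n),N}\setminus A \subseteq A^c$, because vertices of the truncated giant outside $A'$ are not in $A$. On the coupling event of Proposition~\ref{prop:Volume} every truncated edge is also an edge of the full model, so $|E(A,A^c)| \ge c_\ell|A'|^{(d-1)/d}$. Finally, Lemma~\ref{lemma:VolLargeSets}, with a union bound only over connected sets (we may assume $A$ connected; their number is at most $n^d e^{Ck}$ for size $k$, which is beaten by $e^{-c(\mu)k}$ for $\mu$ large), gives $|E(A',\Lambda_n)| \le \mu|A'|$ once $|A'|\ge \log^2 n$. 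Hence
\[
|A'| \ge \frac{1}{\mu}|E(A',\Lambda_n)| \ge \frac{1}{2\mu}|E(A,\Lambda_n)|,
\]
which yields the claim with $c_{\mathrm{iso}} = c_\ell(2\mu)^{-(d-1)/d}$. All failure probabilities are at most $\exp(-c\log^2 n)$, and intersecting these events completes the argument.

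Restricting to connected sets needs care, since $A'$ need not be connected. I would apply the volume bound to $A$ itself, which can be taken connected, and note that $A'\subseteq A$; alternatively, I would apply Lemma~\ref{lemma:VolLargeSets} to connected components of $A'$ in the truncated graph, where Proposition~\ref{prop:IsoTruncated} is applied componentwise using subadditivity of $x\mapsto x^{(d-1)/d}$.
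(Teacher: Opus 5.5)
The step you flagged is a genuine gap, and your proposed way around it does not work. The hypothesis $|E(A,\Lambda_n)|\le\frac12|E(\mathcal{C}_1^{(n)},\Lambda_n)|$ compares $A$ with the \emph{full} giant. Nothing in the hypotheses stops $A'=A\cap\mathcal{C}_1^{(n),N}$ from containing all but $o(n^d)$ vertices of the truncated giant.

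For instance, $A=\mathcal{C}_1^{(n),N}$ satisfies every hypothesis of the lemma as soon as the truncated giant carries at most half of $|E(\mathcal{C}_1^{(n)},\Lambda_n)|$. Nothing in the paper rules this out for a merely supercritical truncation $N$. For this $A$ one has $\mathcal{C}_1^{(n),N}\setminus A'=\emptyset$, so Proposition~\ref{prop:IsoTruncated} produces no boundary edge at all. The actual boundary consists of long edges into small truncated clusters, which the truncated model does not see. Hence your claim that $A'$ or its complement in $\mathcal{C}_1^{(n),N}$ has size between $cn^d$ and half the truncated giant (``comparable volume'') is unjustified. When the complement has size $o(n^d)$, its isoperimetric bound is only $o(n^{d-1})$, which is too weak.

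You need an additional input. One option is to choose $N$ so large that w.h.p.\ $|E(\mathcal{C}_1^{(n),N},\Lambda_n)|\ge(\frac12+\delta)|E(\mathcal{C}_1^{(n)},\Lambda_n)|$. This requires that the density of the truncated giant tends to $\theta(\beta,s)$ as $N\to\infty$, plus an estimate that sets of $\varepsilon n^d$ vertices carry only $o_\varepsilon(1)\,n^d$ edges. It then forces $|E(\mathcal{C}_1^{(n),N}\setminus A',\Lambda_n)|\ge\delta|E(\mathcal{C}_1^{(n)},\Lambda_n)|$. The other option is to treat the regime where $\mathcal{C}_1^{(n),N}\setminus A$ carries little edge volume with the allocation of Theorem~\ref{theo:Partition}, as in the next lemma. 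There the small truncated clusters in $A^c$ carry order $n^d$ edges. Every box of side $n^\eta$ whose allocated set meets $A^c$ and whose core meets $A$ then contributes a boundary edge, giving order $n^{d(1-\eta)}\gg n^{d-1}$ of them. Be aware that the paper's own proof applies Proposition~\ref{prop:IsoTruncated} in its second case without checking this size constraint either, so following it would not fill this hole.

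Three smaller repairs are needed.
\begin{enumerate}
\item An edge of $E(A,\Lambda_n)$ with an endpoint in $\mathcal{C}_1^{(n),N}$ need not lie in $E(A',\Lambda_n)$; it may join $A\setminus\mathcal{C}_1^{(n),N}$ to $\mathcal{C}_1^{(n),N}\setminus A$. Such edges are boundary edges of $A$, so either $|E(A,A^c)|\ge\frac14|E(A,\Lambda_n)|$ or $|E(A',\Lambda_n)|\ge\frac14|E(A,\Lambda_n)|$.
\item Lemma~\ref{lem:MaxDeg} with threshold $U\log n$ gives only polynomially small failure probability. Take threshold $\log^2 n$ (i.e.\ $b=\log n$) and enlarge $\zeta$ accordingly.
\item Of your two options for $|A'|\gtrsim|E(A',\Lambda_n)|$, only the second works. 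The bound $|E(A,\Lambda_n)|\le\mu|A|$ says nothing about the possibly much smaller $|A'|$. By contrast, components of $A'$ in the truncated graph are lattice animals of the bounded-degree $N$-range graph, so the count $n^dC_N^k$ is deterministic and the union bound with Lemma~\ref{lemma:VolLargeSets} is legitimate. Each component with fewer than $\log^2 n$ vertices has its own truncated edge into $\mathcal{C}_1^{(n),N}\setminus A$. So if such components carry a fixed fraction of $|E(A',\Lambda_n)|$, they already yield $|E(A',\Lambda_n)|\log^{-4}n$ boundary edges.
\end{enumerate}

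With these repairs your route is genuinely different from the paper's. The paper assumes $A\subseteq\mathcal{C}_1^{(n),N}$ outright and tessellates at scale $\log^{\zeta_1/d}(n)$. It applies Lemma~\ref{lemma:VolLargeSets} only to deterministic boxes. It then splits according to whether $A$ is sparse in most of its boxes (one boundary edge per such box) or dense (so $|A|\asymp|E(A,\Lambda_n)|$ and Proposition~\ref{prop:IsoTruncated} applies). Your reduction to $A'$ makes the restriction to the truncated giant explicit, and replaces the box dichotomy by a lattice-animal volume bound.
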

\begin{proof}
%\dmc{Why do we need this first case? The second part of this lemma contains it anyway, no? Indeed the current Lemma 6.3 is very similar, and these cases can be merged, with the case analysis that you already have here in the second part it contains everything}    
     Consider $A\subseteq \mathcal{C}_1^{(n), N}$ with the properties stated in the lemma. Consider the tessellation of $\Lambda_n$ using boxes of side length $M=\log^{\zeta_1/d}(n)$ for $\zeta_1>0$. Note that we always have $|E(A, A^c)| \ge |E_N(A, A^c)|$, since adding long edges cannot make the number of edges from $A$ to $A^c$ smaller. However, we might also have $|E(A, \Lambda_n)| \gg |E_N(A, \Lambda_n)|$; we show that this is not the case. 
     
     Choose $\zeta_1>0$ so that, by Proposition \ref{prop:Volume},
     \begin{equation*}
         \mathbf{P}\left( \exists \Lambda_M \in \mathcal{T}_M \colon |\mathcal{C}_1^{(n), N} \cap \Lambda_M| \le \rho_1 M^d \right) \le Ce^{-cM^{d-1}} \le Ce^{-c\log^2(n)}.
     \end{equation*}
     Moreover, by Lemma \ref{lemma:VolLargeSets} there exists $\rho_2>0$ such that, by a union bound,
    \begin{equation}\label{eqn:NoLargeVol}
        \mathbf{P}\left(\exists \Lambda_M \in \mathcal{T}_M \subseteq \mathcal{C}_1^{(n), N} \colon |E(\Lambda_M,\Lambda_n)|\ge \rho_2 M^d \right) \le Ce^{-c\log^2(n)}.
    \end{equation}
    Let $\Lambda_M^{(i)}, i = 1, \dots, r$ be the boxes containing vertices of $A$. On the event inside~\eqref{eqn:NoLargeVol}, we have $r \ge |E_N(A, \Lambda_n)| \log^{-\zeta_1}(n)$. 
    
    First, consider the case in which $A$ occupies less than $1/2$ of the mass in more than $1/2$ of the $\Lambda_M^{(i)}, i = 1, \dots, r$ it is contained in. Then $A$ has at least one boundary edge in each of those boxes. Necessarily, the total boundary is 
    \begin{equation*}
        |E_N(A, A^c)| \ge c_{\mathrm{iso}} |E_N(A, \Lambda_n)|\log^{-\zeta_1}(n) \ge c_{\mathrm{iso}}|E(A,\Lambda_n)|^{(d-1)/d},
    \end{equation*}
    for $\zeta_1 < \zeta/d$.
    
    Otherwise, consider the case in which $A$ occupies at least $1/2$ of the mass in more than $1/2$ of the $\Lambda_M^{(i)}, i = 1, \dots, r$ it is contained in. Then, we know that $A$ accumulates volume $\rho_1 M^d/2$ in each such box. Then, since in all boxes the edge volume is comparable to the vertex volume (see \eqref{eqn:NoLargeVol}), a positive proportion of the mass of $A$ is necessarily in these sets. 
    
    By Proposition \ref{prop:IsoTruncated} we obtain that
    \begin{equation*}
        |E(A,A^c)| \ge |E_N(A,A^c)| \ge c_{\mathrm{iso}} |A|^{(d-1)/d} \ge 
    c_{\mathrm{iso}} |E(A,\Lambda_n)|^{(d-1)/d}.
    \end{equation*}
    The proof is concluded.
    % First we consider the case in which $A$ is connected within $\mathcal{C}_1^{(n), N}$. By Proposition~\ref{prop:IsoTruncated} we have that (within the graph $\mathcal{C}_1^{(n), N}$) \dmc{I would put in the Proposition 4.1 only $|A|^{(d-1)/d}$, as I mentioned there, and here if you need it you can put $|E_N(A, \Lambda_n)|^{\frac{d-1}{d}}$}
    % \begin{equation*}
    %     |E_N(A, A^c)| \ge c_{\mathrm{iso}} |E_N(A, \Lambda_n)|^{\frac{d-1}{d}} \ge c_{\mathrm{iso}} |A|^{\frac{d-1}{d}}.
    % \end{equation*}

    % To deal with this, we first observe that \dm{there exists $c=c(N,d) > 0$}   such that the number of lattice animals (connected sets) of size $k$ in the $N$-truncated percolation models is bounded from above by $c(N, d)^k$. Then, by Lemma~\ref{lemma:VolLargeSets} we can find $\mu^*>0$ sufficiently large such that 
    % \begin{equation*}
    %     \mathbf{P}\left(\sum_{x \in A} \mathrm{deg}^\omega(x) \ge \mu^* |A|\right) \le e^{- (c(N, d) + 1) |A|}.
    % \end{equation*}
    % Thus, 
    % Thus, by the Borel-Cantelli lemma, with $\mathbf{P}$-probability larger than $(1-\varepsilon)$, for any set $A$ with $|A| \ge c_{b}\log(n)^{\frac{d}{d-1}}$, we have $|E(A,\Lambda_n)| \le \mu |A|$. Hence,
    % $$ 
    % |E(A,A^c)| \ge |E_N(A,A^c)| \ge c_{\mathrm{iso}} |A|^{(d-1)/d} \ge 
    % c_1 |E(A,\Lambda_n)|^{(d-1)/d},
    % $$
    % and the result follows.

\end{proof}

\subsection{Small sets outside the giant}
It remains to deal with sets which contain at least a constant fraction of its incident edges in components of the $N$-truncated percolation not connected to the giant truncated component. We will here perform the most delicate step in the proof, towards which most of our preparations were directed.
\begin{figure}[H]
    \centering
    \includegraphics[width=0.55\linewidth]{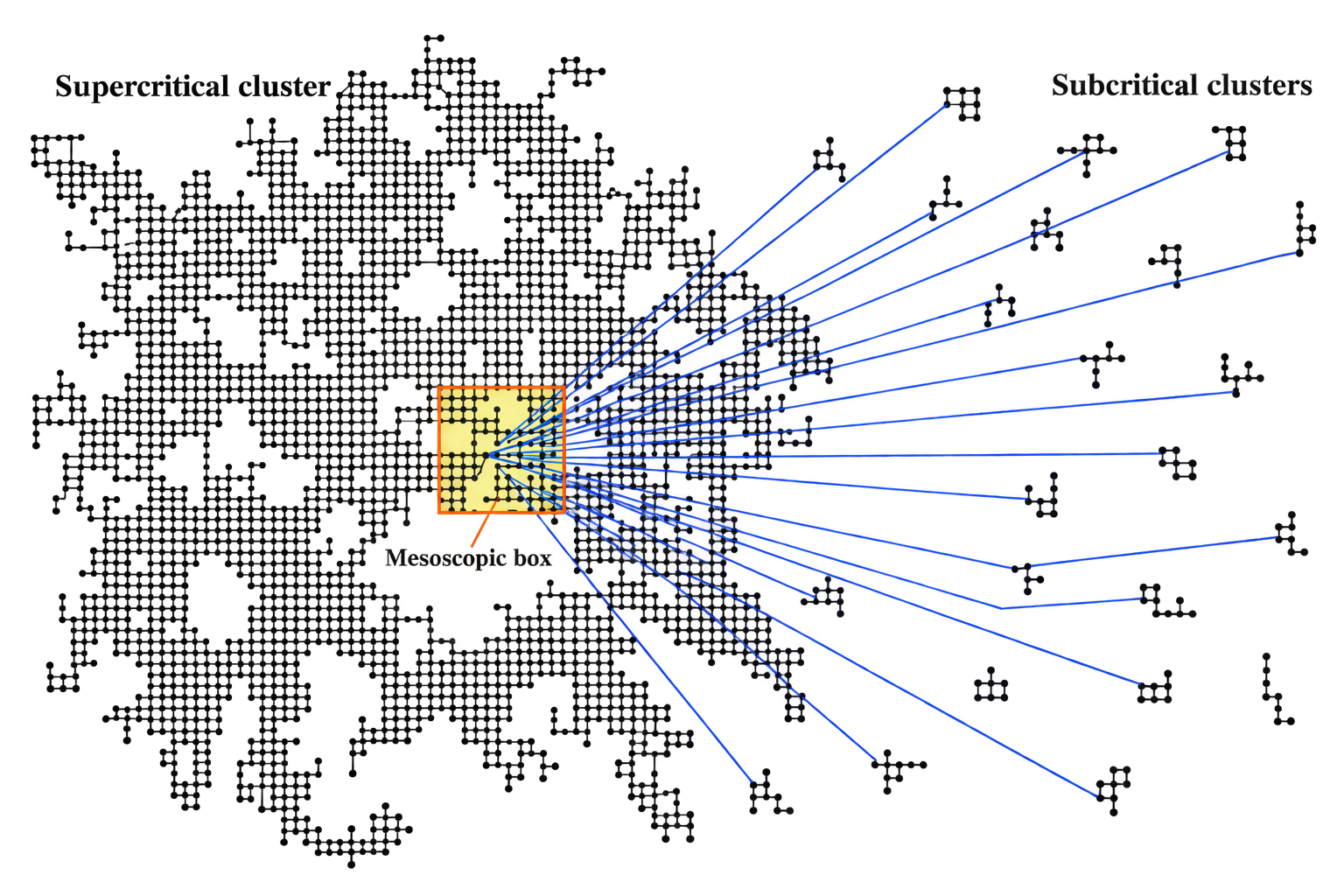}
    \caption{The following situation is the one we need to avoid in this proof and that requires the allocation theorem: many small clusters get assigned to the same mesoscopic box. Observe that the blue edges cannot alter the volume of the box, but the attached microscopic clusters might (in principle, by a multiplicative term of polylogarithmic size).}
    \label{fig:FigFar}
\end{figure}

The main result of the section will be the following lemma.

\begin{lem}
    For any $\zeta>0$, there exists a constant $c, c_{\mathrm{iso}}>0$ such that, with $\mathbf{P}$-probability larger than $1 - \exp(-c\log^2(n))$ the following happens: for all sets $A \subseteq \mathcal{C}_1^{(n)}$ with $n^{\zeta} \le |E(A, \Lambda_n)| \le |E(\mathcal{C}_1^{(n)}, \Lambda_n)|/2$ such that at least $1/4$ of $|E(A, \Lambda_n)|$ is contained in connected components $\{U_i\}_{i \ge 0}$ of the $N$-truncated percolation of size $|U_i|\le c_{b} \log(n)^{\Delta_1}$ not connected to $\mathcal{C}_1^{(n), N}$, we have
    \begin{equation*}
        |E(A, A^c)| \ge c_{\mathrm{iso}} |E(A, \Lambda_n)|^{\frac{d-1}{d}}.
    \end{equation*}
\end{lem}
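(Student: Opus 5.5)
We work on the event $A_n$ of Theorem~\ref{theo:Partition}, intersected with the event of Proposition~\ref{prop:IsoTruncated} and the preceding lemma. We take $M = n^{\zeta'}$ for a small $\zeta'<\zeta/d$. Write $\mathcal{U}_A$ for the union of the small truncated clusters $U_i$ that meet $A$ and are not connected to $\mathcal{C}_1^{(n),N}$. The idea is to transfer the mass of $A$ lying in the $U_i$ onto the cores $\mathcal{C}_1^{(n),N}(\Lambda_M)$ through the allocation map $\phi$. Each such vertex $x$ is allocated to a box $\phi(x)$. By item (3) of Theorem~\ref{theo:Partition}, $x$ is joined to $\mathcal{C}_1^{(n),N}(\phi(x))$ by a path of length at most $\log^{\Delta_1}(n)$ inside $\widetilde{\mathcal{C}}^{(n)}(\phi(x))$. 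By item (2), each box absorbs edge-volume at most $\rho_2 M^d$. Consequently the allocated mass of $A$ is spread over at least $r \ge c|E(A,\Lambda_n)|/M^d$ boxes.

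We then split into two cases according to how $A$ sits inside these blocks $\widetilde{\mathcal{C}}^{(n)}(\Lambda_M)$.

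\emph{Case 1: in at least half of the $r$ boxes, $A$ does not contain the whole block $\widetilde{\mathcal{C}}^{(n)}(\Lambda_M)$.} Each block is connected by item (2), and $A$ meets it. Hence each such block contributes at least one edge of $E(A,A^c)$. The blocks are disjoint, so these edges are distinct, and $|E(A,A^c)|\ge r/2\ge c|E(A,\Lambda_n)|M^{-d}$. This dominates $|E(A,\Lambda_n)|^{(d-1)/d}$ because $|E(A,\Lambda_n)|\ge n^\zeta$ and $M^{d}\le n^{\zeta'd}$ with $\zeta' d<\zeta/d$.

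\emph{Case 2: $A$ contains the whole block in at least half of these boxes.} Then $A$ contains the cores $\mathcal{C}_1^{(n),N}(\Lambda_M)$ of those boxes. By item (1) each core has at least $\rho_1M^d$ vertices, so $A' := A\cap \mathcal{C}_1^{(n),N}$ satisfies $|A'|\ge c\, r M^d \ge c|E(A,\Lambda_n)|$. We apply Proposition~\ref{prop:IsoTruncated} to $A'$. Its lower size bound holds, and when $|A'|$ is at most half of the truncated giant we obtain $|E(A,A^c)|\ge |E_N(A',\mathcal{C}_1^{(n),N}\setminus A')| \ge c_\ell |A'|^{(d-1)/d}$.

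One must check that boundary edges of $A'$ inside the truncated giant are really edges to $A^c$. This holds because a truncated edge leaving $A'$ but staying inside $\mathcal{C}_1^{(n),N}$ goes to a vertex not in $A$. If instead $|A'|$ exceeds half of the truncated giant, we use the complement. By the constraint $|E(A,\Lambda_n)|\le|E(\mathcal{C}_1^{(n)},\Lambda_n)|/2$ together with Proposition~\ref{prop:Volume} and Lemma~\ref{lemma:VolLargeSets}, the complement of $A$ in the truncated giant still has linear size. In that case we apply Proposition~\ref{prop:IsoTruncated} to the complement, which gives boundary of order $n^{d-1}\ge |E(A,\Lambda_n)|^{(d-1)/d}$.

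The main obstacle is the Case 1 / Case 2 bookkeeping. Specifically, we need that the boxes touched by the allocated mass are genuinely numerous, so that no polylogarithmic accumulation of small clusters occurs in a single box (the situation of Figure~\ref{fig:FigFar}). This is exactly what item (2) of Theorem~\ref{theo:Partition} provides. We must also ensure that the mass of $A$ outside the $U_i$ does not spoil the count; the hypothesis of a $1/4$ fraction handles this. Finally, we need Case 1 to beat the target exponent, which forces the choice $M=n^{\zeta'}$ with $\zeta'$ small relative to $\zeta$. This choice is admissible since Theorem~\ref{theo:Partition} allows any $M\ge n^{\varepsilon}$.
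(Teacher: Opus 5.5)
Your proof takes the paper's route. You use Theorem~\ref{theo:Partition} to spread the small-cluster mass of $A$ over at least $c|E(A,\Lambda_n)|M^{-d}$ boxes. In each box you either extract one boundary edge or find that $A$ swallows the core, and in the latter case you invoke Proposition~\ref{prop:IsoTruncated}. Your bookkeeping is leaner: connectivity of the blocks $\widetilde{\mathcal{C}}^{(n)}(\Lambda_M)$ replaces the red/blue edges and the sub-cases (A.1)/(A.2). Applying Proposition~\ref{prop:IsoTruncated} to $A'=A\cap\mathcal{C}_1^{(n),N}$, instead of the paper's $B=\bigcup_i B_i$, is an improvement. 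Truncated edges leaving $B$ may end in $A\setminus B$, whereas truncated edges leaving $A'$ end in $\mathcal{C}_1^{(n),N}\setminus A\subseteq A^c$.

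Two points need repair.

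The first is the exponent. Case 1 needs $|E(A,\Lambda_n)|^{1/d}\gtrsim M^{d}$, i.e.\ $\zeta'<\zeta/d^2$, which is the paper's $\eta<\zeta/d^2$. Your opening choice $\zeta'<\zeta/d$ is too weak, although your Case 1 sentence states the correct condition.

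The second is more substantive: the complement step does not follow from what you cite. Proposition~\ref{prop:Volume} and Lemma~\ref{lemma:VolLargeSets} give $|\mathcal{C}_1^{(n),N}|\ge \eta n^d$ and linear edge volumes. They do not show that $\mathcal{C}_1^{(n),N}$ carries more than half of $|E(\mathcal{C}_1^{(n)},\Lambda_n)|$. If the truncated giant is only a small fraction of $\mathcal{C}_1^{(n)}$, a set $A\supseteq \mathcal{C}_1^{(n),N}$ can satisfy every hypothesis of the lemma. Then $D:=\mathcal{C}_1^{(n),N}\setminus A$ is empty and your argument gives nothing.

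You can close this in one of two ways.
\begin{enumerate}
\item Choose $N$ so large that the truncated density is close to $\theta(\beta,s)$. This is a continuity input not contained in the stated results.
\item Staying within your method, handle $|D|<c'n^d$ by running Case 1 on $\mathcal{C}_1^{(n)}\setminus A$. This set carries edge volume at least $\tfrac12|E(\mathcal{C}_1^{(n)},\Lambda_n)|\ge cn^d$. Lemma~\ref{lemma:VolLargeSets}, with a union bound over sets of size $\lceil c'n^d\rceil$ (edge volume is monotone), shows that $D$ is incident to only a small part of it once $c'$ is small. The rest sits on allocated vertices and is spread over at least $cn^dM^{-d}$ blocks. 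At most $c'n^d/(\rho_1 M^d)$ of these blocks have their core inside $D$. Every remaining block meets both $A$ and $A^c$ and contributes an internal boundary edge. This gives $|E(A,A^c)|\ge cn^dM^{-d}\ge c\,n^{d-1}$.
\end{enumerate}
When $|D|\ge c'n^d$, your application of Proposition~\ref{prop:IsoTruncated} to $D$ works as written. The paper's own proof never checks the upper size constraint of Proposition~\ref{prop:IsoTruncated}, so some extra argument of this kind is genuinely needed.
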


\begin{proof}
    Firstly, we observe that by Lemma~\ref{lemma:SecondLargest} the connected components in the truncated percolation models that are not in the giant are almost surely at most of size $\log^{\Delta_1}(n)$ for some $\Delta_1>0$. Indeed, for a well chosen $\Delta_1>0$, by Lemma~\ref{lemma:SecondLargest} one obtains that $|\mathcal{C}_2^{(n), N}| \le c_b \log^{\Delta_1}(n)$ $\mathbf{P}$-a.s.\ for all $n$ large.
    
    Now, let $A_1, \dots, A_{r}$ with $r \ge |E(A, \Lambda_n)|^{1 - \delta}$, for arbitrarily small $\delta>0$, be the components in the $N$-truncated percolation outside the giant of the truncated percolation, including singletons. The bound on $r$ follows as before, since each component has at most $c_{b} \log(n)^{\Delta_1}$ sites and each site has at most $\mu \log(n)$ many edges incident to itself. 

    By Theorem~\ref{theo:Partition}, for each $x \in \mathcal{C}_1^{(n)}$, for all $M \ge n^\eta$ (the value of $\eta>0$ will be fixed later to be small enough to ensure certain properties) there exists some $\Lambda_M$ such that $d_{\mathcal{C}_1^{(n)}}(x, \mathcal{C}_1^{(n), N}(\Lambda_M)) \le \log^{\Delta_1}(n)$ and $x \in \tilde{\mathcal{C}}^{(n)}(\Lambda_M)$ (recall that $\tilde{\mathcal{C}}^{(n)}(\Lambda_M)$ denotes the connected set of vertices allocated to the box $\Lambda_M \in \mathcal{T}_M$).

    We now distinguish edges according to certain properties:
    \begin{enumerate}
        \item We call the edges from $A_i$ to $\mathcal{C}_1^{(n), N}$ that land on a vertex inside $A \cap \mathcal{C}_1^{(n), N}$ {\color{blue}blue}.
        \item For each $i= 1, \dots, r$, from each vertex $x \in A_i$, by Theorem~\ref{theo:Partition}, there exists an open path of length at most $\log^{\Delta_1}(n)$ that connects $x$ to $\mathcal{C}_1^{(n), N}$. Furthermore, such path stays inside the random sets $\tilde{C}^{(n)}(\Lambda_M)$ introduced in Theorem~\ref{theo:Partition}. Let us also recall that the existence of this path defines the allocation function $\phi$ in Theorem~\ref{theo:Partition}. We call the edges that connect the sets $A_i$ to  $\mathcal{C}_1^{(n), N}$ {\color{red}red}.
    \end{enumerate}
    Observe that each edge from $\cup_{i =1, \dots, r} A_i$ to $\mathcal{C}_1^{(n), N}$ can be only {\color{red}red}, only {\color{blue}blue}, not colored or both {\color{red}red} and {\color{blue}blue}. 

    \begin{figure}[H]
    \centering
    \includegraphics[width=0.55\linewidth]{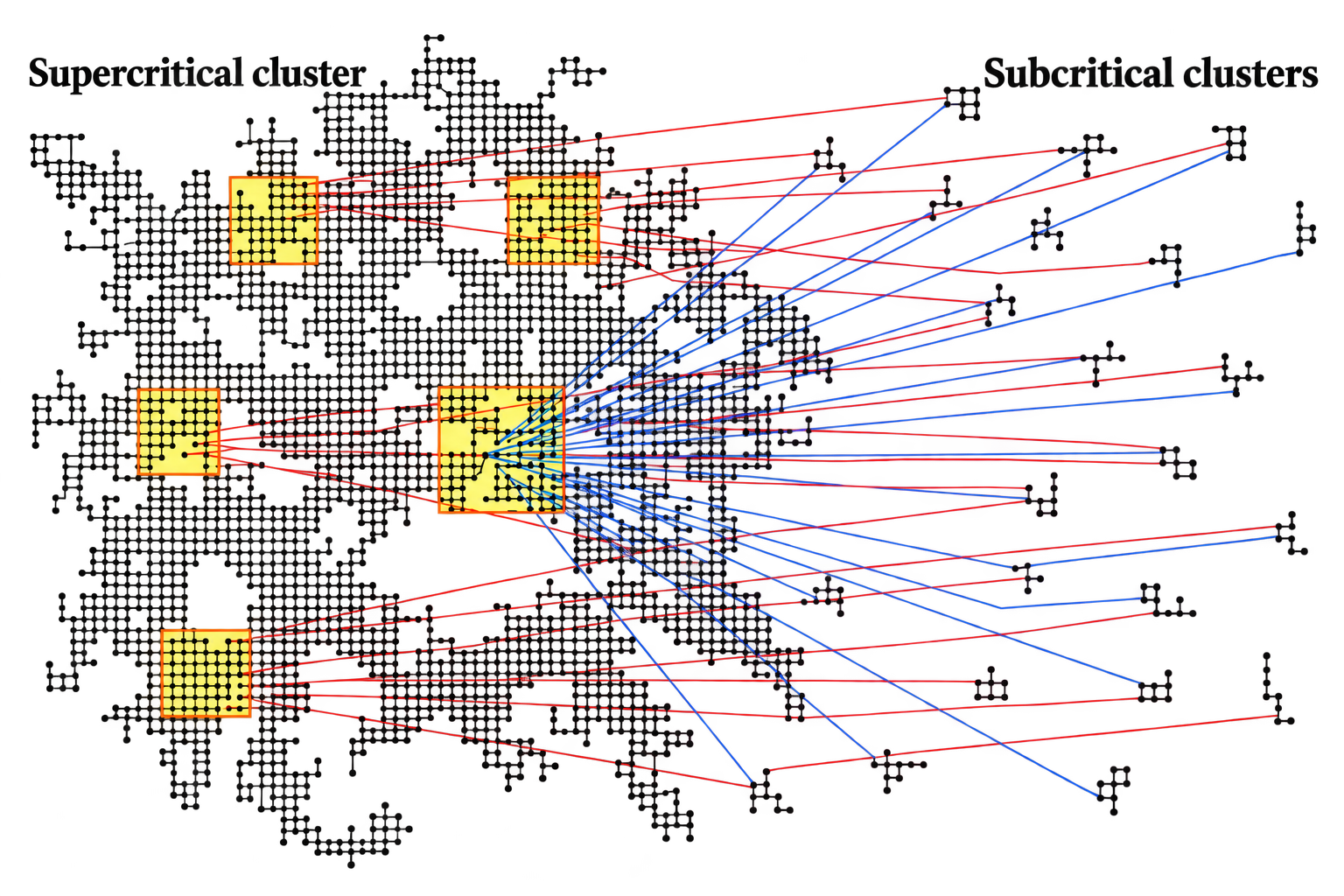}
    \caption{How the allocation theorem solves our issues: even if the situation of the blue edges happens, there are many red edges distributing the mass of the small vertices. Then these red edges either add boundary edges or they distribute the mass of microscopic clusters.}
    \label{fig:FigFar}
\end{figure}
    
    By Theorem~\ref{theo:Partition}, each $x \in A_i$, is within graph distance $\log^{\Delta_1}(n)$ from a {\color{red}red} edge and hence from a vertex $y \in \mathcal{C}_1^{(n), N}$. This implies readily that there exists some $c, \Delta_3>0$ such that the total number of {\color{red}red} edges is at least $c |E(A, \Lambda_n)| \log^{-\Delta_3}(n)$. 

    Moreover, let $B_i, i= 1, \dots, \tilde{r}$ be the components of $A$ inside distinct boxes of side length $M= n^{\eta}$ that receive {\color{red}red} edges. Observe that each of these components is necessarily attached to $\mathcal{C}^{(n), N}_1$, but they could also connected among themselves by longer edges. Formally, $B_i \subseteq A \cap \mathcal{C}_1^{(n), N}(\Lambda_M^{(i)})$ with $\Lambda_M^{(i)} \in \mathcal{T}_M$, 
    %\dmc{I don't understand the next sentence, how can there be two edges according to the rule of the allocation? I'd leave it out} \cs{in the box, there could be multiple red edges coming from points in $A$. The allocation rule ensures the existence of at least one.} 
    and in the case that there are two (or more) {\color{red}red} edges from $A_j, j=1, \dots, r$ to two (or more) distinct (i.e.\ not directly connected by an edge) $B_i^{(1)}, B_i^{(2)} \subseteq A \cap \mathcal{C}_1^{(n), N}(\Lambda_M^{(i)})$, then choose one to be $B_i$ with some deterministic rule (e.g.\ containing the minimal vertex in the lexicographic ordering).  
    We highlight that in the case in which all {\color{red}red} edges from $A_j, j=1, \dots, r$ landing in $\mathcal{C}_1^{(n),N}(\Lambda_M^{(i)})$ are not {\color{blue}blue}, we consider $B_i$ to be equal to the empty set (that means that these {\color{red}red} edges land in the set $\mathcal{C}_1^{(n), N}\setminus A$).
    
 Theorem~\ref{theo:Partition} implies that the set of allocated vertices (and relative edges) going to each box is at most $(\rho_2 - \rho_1)M^d$. This implies that there at least $\tilde{r} \ge c|E(A, \Lambda_n)| n^{-d\eta}$ components $B_i$. Moreover, since $|\mathcal{C}_1^{(n), N}(\Lambda_M^{(i)})| \ge \rho_1M^d$ in every $\Lambda_M^{(i)} \in \mathcal{T}_M$, we obtain 
    \begin{equation}\label{eqn:Mass}
        \bigg| E_N\bigg(\bigcup_{i = 1}^{\tilde{r}} \mathcal{C}_1^{(n), N}(\Lambda_M^{(i)}), \Lambda_n \bigg)\bigg| \ge \frac{\rho_1}{\rho_2}|E(\cup_{i = 1}^{r}A_i, \Lambda_n)| \ge \frac{1}{2} \frac{\rho_1}{\rho_2}|E(A, \Lambda_n)|.
    \end{equation}
    Assume now that for a proportion of more than $1/2$ of the $B_i, i= 1, \dots, \tilde{r}$, we have $B_i = \emptyset$. This necessarily implies that for each empty box there exists a {\color{red}red} edge which is not {\color{blue}blue}. Observe that, by definition, all such edges are boundary edges of $A$. In this case, the total boundary of $A$ is at least
    \begin{equation*}
        |E(A, A^c)| \ge c |E(A, \Lambda_n)| n^{-\eta d},
    \end{equation*}
 which exceeds $c|E(A, \Lambda_n)|^{\frac{d-1}{d}}$ for all $|A| \ge n^{\zeta}$, if $\eta>0$ is chosen small enough w.r.t. $\zeta>0$.
 
 Now assume that for a proportion of more than $1/2$ of the $B_i, i= 1, \dots, \tilde{r}$, we have $B_i \neq \emptyset$. We distinguish two further cases:
     \begin{itemize}
        \item[(A.1)] The $B_i \neq \emptyset$ are such that 
        \begin{equation*}
            |E_N(B_i, \Lambda_n)| \ge \frac{1}{2} |E_N( \mathcal{C}_1^{(n), N}(\Lambda_M^{(i)}), \Lambda_n)|,
        \end{equation*}
        for at least $\tilde{r}/4$ boxes.
        \item[(A.2)] The $B_i \neq \emptyset$ are such that 
        \begin{equation*}
            |E_N(B_i, \Lambda_n)| < \frac{1}{2} |E_N( \mathcal{C}_1^{(n), N}(\Lambda_M^{(i)}), \Lambda_n)|,
        \end{equation*}
        for at least $\tilde{r}/4$ boxes.
    \end{itemize}
    In case (A.1), the component $B = \cup_i B_i$ has total edge mass comparable (up to a multiplicative constant $1/4$) with $\cup_i\mathcal{C}_1^{(n), N}(\Lambda_M^{(i)})$ and hence, by \eqref{eqn:Mass} we also have 
    \begin{equation}
        |E_N(B, \Lambda_n)| \ge  \frac{\rho_1}{8\rho_2}|E(A, \Lambda_n)|.
    \end{equation}
    Proposition~\ref{prop:IsoTruncated} implies that its boundary is larger than
    \begin{equation*}
        |E_N(B, B^c)| \ge c |E_N(B, \Lambda_n)|^{\frac{d-1}{d}},
    \end{equation*}
    and since $|E_N(B, B^c)| \le |E(A, A^c)|$ we obtain that
    \begin{equation*}
        |E(A, A^c)| \ge c |E(A, \Lambda_n)|^{\frac{d-1}{d}},
    \end{equation*}
    as required. Finally, if (A.2) happens then in each box in which $B_i$ takes less than $1/2$ the volume there is at least one boundary edge for $A$, hence
    \begin{equation*}
        |E(A, A^c)| \ge c |E(A, \Lambda_n)| n^{-\eta d},
    \end{equation*}
    which concludes our case distinction by choosing $\eta < \zeta/d^2$.

\end{proof}

\begin{proof}[Proof of Theorem \ref{theo:MixingDiffusive}]
    The following statements hold with high $\mathbf{P}$-probability. In particular, we consider the event of Theorem \ref{thm:LBIsoGen} to hold throughout the proof. Observe that by the Borel-Cantelli lemma, such event holds almost surely for all $n$ large enough. We argue along the lines of \cite{BenjaminiMossel}.

    We begin by observing that with high probability $\rho_1 n^d \le |E(\mathcal{C}_1^{(n)}, \Lambda_n)| \le \rho_2n^d$.
    
    Furthermore, Theorem \ref{thm:LBIsoGen} implies that for all $A \subseteq \mathcal{C}_1^{(n)}$ such that  $n^{\eta-d} \le \pi(A) \le 1/2$ we have that
    \begin{equation*}
        \varphi(A) \ge   \frac{Q(A, A^c)}{\pi(A) \pi(A^c)} \ge \frac{|E(A, A^c)|}{|E(\Lambda_n, \Lambda_n)| \pi(A)} \ge \frac{|E(A, \Lambda_n)|^{1 - \frac{1}{d}}}{|E(\Lambda_n, \Lambda_n)| \pi(A)} \ge \frac{(\rho_1\pi(A) n^d)^{1-\frac{1}{d}}}{\rho_2n^d \pi(A)} \ge \frac{c}{n\pi(A)^{1/d}}.
    \end{equation*}
    By the fact that $\mathcal{C}_1^{(n)}$ is connected, for $A$ with $\pi(A) \le n^{\eta-d}$ we obtain
    \begin{equation*}
        \varphi(A) \ge \frac{c}{n^{\eta}}.
    \end{equation*}
    We now claim that, for all $t \in (0, 1/2)$,
    \begin{equation}\label{eqn:ClaimIso}
        \varphi(t) \ge \min\left\{\frac{c}{n^{\eta}}, \frac{c}{n t^{1/d}}\right\}.
    \end{equation}
    First, observe that \eqref{eqn:ClaimIso} holds for $t=1/2$ as the minimum is achieved with both $A, A^c$ connected. Moreover, by adjusting the constant $c>0$ we can (by monotonicity) extend the claim to $q \le t \le 1/2$ for any fixed $0<q<1/4$, whose precise value will be fixed later on.
    
    Fix $0<t \le q$, and let $A$ be the connected set with minimal $\varphi(A)$ among those with $\pi(A) \le t$. If $A$ is such that $ \pi(A) \le n^{\eta - d}$ then \eqref{eqn:ClaimIso} follows directly, thus we assume this does not hold. Furthermore, we assume that $\varphi(A) \le c/(nt^{1/d})$, otherwise the required inequality also holds. Let $B = A^c$ and let $B_i$, $i=1, \dots, r$ be the connected components (disconnected from each other by removing $A$) of $B$ ordered so that $\pi(B_1) \ge \pi(B_2) \ge \dots$. Assume first $\pi(B_1) \le \pi(A)$. In this case there exists a union of $B_i$ such that $1/4 \le \pi(\cup_i B_i) \le 1/2$. We obtain 
    \begin{equation*}
        \frac{c}{n^{1/d}} \le \varphi(1/2) \le \varphi(\cup_i B_i) \le 8 Q(A, A^c)  \le 8c\varphi(A)\pi(A) \le 8 \frac{ct^{1-1/d}}{n} \le \frac{4cq^{1 - 1/d}}{n},
    \end{equation*}
    which leads to a contradiction for $q$ small enough. Now assume $\pi(B_1) \ge \pi(A)$. Let $B'$ be the smaller set among $B_1$ and $B_1^c$, i.e.\ $B' = \argmin \{\pi(B_1), \pi(B_1^c)\}$. Observe that, by construction, they are both connected and that $\pi(A) \le \pi(B') \le 1/2$. This implies that $\varphi(B') \ge c/(\pi(B')^{1/d}n)$, which yields that
    \begin{equation*}
        Q(A, A^c) \ge Q(B', B'^c) \ge \frac{c\pi(B')^{1-1/d} (1 - \pi(B'))}{n} \ge \frac{c\pi(A)^{1-1/d} (1 - \pi(A))}{n}.
    \end{equation*}
    This directly implies that $\varphi(A) \ge c/(nt^{1/d})$, concluding the proof of \eqref{eqn:ClaimIso}. Plugging \eqref{eqn:ClaimIso} into \eqref{eqn:CondMeth} yields the required bound as we can choose $\eta>0$ such that
    \begin{equation*}
        n^{2\eta}\int_{1/n}^{1/2} \frac{1}{t} dt \le Cn^{2\eta} \log(n) \le C n^2.
    \end{equation*}
\end{proof}

\section{The model with nearest neighbour edges and $d=1$}\label{sect:D1}

In dimension $d=1$, it is a classical result that for $s > 2$ one needs to assume that all edges of length $1$ are present in order to percolate. We will thus naturally assume that $\mathbf{p}(x, y) = 1$ if $y \in \{x-1, x+1\}$. The lower bound for the mixing time is proved in \cite{BergerBenjaminiYadin}. The upper bound follows from the following basic lemma and \cite[Theorem 2]{Oliveira}.
\begin{lem}
    Let $d=1$ and $s>2$. There exists a constant $C>0$ such that $\mathbf{P}$-a.s.\ for all $n$ large enough,
    \begin{equation*}
        \sup_{x \in \Lambda_n, A \colon \pi(A)\ge 1/3} E^\omega_0[T_A] \le Cn^2.
    \end{equation*}
\end{lem}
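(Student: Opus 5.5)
The plan is to bound hitting times through the electrical-network (commute time) identity, using that the nearest-neighbour path $\{x,x+1\}$ is present. Let $G$ be the graph on $\Lambda_n$; it contains the path $-n/2,\dots,n/2$, so $\mathcal{C}^{(n)}_1=\Lambda_n$. For a target set $A$ with $\pi(A)\ge 1/3$ we have $E^\omega_x[T_A]\le \max_{a\in A}E^\omega_x[T_a]$. Hence it suffices to bound $E^\omega_x[T_y]$ for all $x,y\in\Lambda_n$. For the constant speed walk, the commute time identity gives
\begin{equation*}
E^\omega_x[T_y]+E^\omega_y[T_x]= 2|E|\,R_{\mathrm{eff}}(x\leftrightarrow y),
\end{equation*}
where the time normalisation is harmless because the CSRW jumps at rate $1$. (If one prefers, the statement with $E_0$ can be read as the supremum over starting points $x$.)

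\textbf{Step 1 (resistance).} By Rayleigh monotonicity, adding the long edges can only decrease resistances. Therefore
\begin{equation*}
R_{\mathrm{eff}}(x\leftrightarrow y)\le |x-y|\le n,
\end{equation*}
which is the resistance along the nearest-neighbour path.

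\textbf{Step 2 (edge count).} We need $|E|\le Cn$ almost surely for all large $n$. The number of edges inside $\Lambda_n$ is at most $\sum_{x\in\Lambda_n}\deg^\omega(x)$. Lemma~\ref{lemma:VolLargeSets} applied with $A=\Lambda_n$ and a large $\mu$ gives
\begin{equation*}
\mathbf{P}\Big(\sum_{x\in\Lambda_n}\deg^\omega(x)\ge\mu n\Big)\le e^{-c(\mu)n}.
\end{equation*}
This bound is summable in $n$, so Borel--Cantelli yields $|E|\le \mu n$ for all $n$ large, almost surely. Here I use that the expected degree is finite because $s>2>d=1$; the deterministic nearest-neighbour edges only add $2$ to each degree, so the lemma still applies.

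\textbf{Step 3 (conclusion).} Combining Steps 1 and 2 gives
\begin{equation*}
\sup_{x,y}E^\omega_x[T_y]\le 2\mu n\cdot n= Cn^2,
\end{equation*}
and hence the same bound for $E_x^\omega[T_A]$.

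The only point needing care is that the reduction $E_x[T_A]\le\max_a E_x[T_a]$ is crude but sufficient, and that the commute time identity applies to the continuous-time walk with unit jump rates. Since the number of jumps before $T_y$ equals the discrete-time hitting time, the expectations coincide. I expect no real obstacle: the main subtlety is only the almost sure uniformity in $n$, which Borel--Cantelli handles. Combined with \cite[Theorem 2]{Oliveira}, this gives $T_{\mathrm{mix}}\le Cn^2$.
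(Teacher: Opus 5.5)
Your proof is correct and follows the paper's argument: the commute time identity, the effective resistance bounded by the length $\le n$ of the nearest-neighbour path, and a linear bound $|E|\le \mu n$ on the number of edges. The differences are cosmetic. You bound point-to-point hitting times, so $\pi(A)\ge 1/3$ is never used, whereas the paper works with exit times of sets $B$ with $\pi(B)\le 2/3$ via $R(x,B^c)$. You also get the edge bound from Lemma~\ref{lemma:VolLargeSets} plus Borel--Cantelli rather than the ergodic theorem.
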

\begin{proof}
By the ergodic theorem for stationary sequences, $\mathbf{P}$-a.s.\ for all $n$ large enough, $|E(\Lambda_n, \Lambda_n)| \le \mu n$ for $\mu > \mathbf{E}[\mathrm{deg}^\omega(0)]$. On this event, clearly, there does not exist any $A$ with more than $\mu n$ total edges. Observe that the supremum of the hitting time over all sets with $\pi(A) \ge 1/3$ coincides with the supremum of the exit time over sets $B$ with $\pi(B) \le 2/3$. For any such set $B$ and any $x \in \Lambda_n$, by the commute time identity (see \cite[Proposition~10.7]{PeresLevin}) we obtain 
\begin{equation*}
    E_x^\omega[T_{B^c}] \le 2 |E(B, B)| R(x, B^c),
\end{equation*}
where $R(x, B^c)$ denotes the effective resistance (see \cite[Section 9.4]{PeresLevin} for the formal definition). Deterministically, $R(x, B^c)$ is bounded uniformly over all $x$ and $B$ by the graph distance, which is at most $n$. It follows immediately that $\mathbf{P}$-a.s.\ for all $n$ large enough
\begin{equation*}
        \sup_{x \in \Lambda_n, A \colon \pi(A)\ge 1/3} E^\omega_0[T_A] \le 2\mu n^2.
    \end{equation*}
    The proof is concluded.
\end{proof}

\noindent \textbf{Acknowledgments}: This work was facilitated by DM's research visit to TU Munich. The authors gratefully acknowledge PUC Santiago and TU Munich for making this visit possible. Dieter Mitsche was supported by ANID-FONDECYT regular grant 1261336. \\

\noindent \textbf{AI Disclosure}: ChatGPT Astra was used to generate the two figures. LLMs were not used in any other capacity; in particular, the arguments and their presentations were derived and written entirely by the authors.
\bibliographystyle{plain}
\bibliography{ref}

@article{liggettdomination,
author={Liggett, Thomas M. and Schonmann, Roberto H. and Stacey, Alan M.},
year={1997}, 
title={Domination by product measures}, 
journal={The Annals of Probability}, volume={25},
number={1},
pages={71--95}
}

@book {PeresLevin,
    AUTHOR = {Levin, David A. and Peres, Yuval and Wilmer, Elizabeth L.},
     TITLE = {Markov chains and mixing times},
      NOTE = {With a chapter by James G. Propp and David B. Wilson},
 PUBLISHER = {American Mathematical Society, Providence, RI},
      YEAR = {2009},
     PAGES = {xviii+371},
      ISBN = {978-0-8218-4739-8},
   MRCLASS = {60J10 (60-01 60J05 60K35 60K37 68U20 68W20)},
  MRNUMBER = {2466937},
MRREVIEWER = {Olle\ H\"aggstr\"om},
       DOI = {10.1090/mbk/058},
       URL = {https://doi-org.tum-eaccess.de/10.1090/mbk/058},
}

@article {CrawfordSly1,
    AUTHOR = {Crawford, Nicholas and Sly, Allan},
     TITLE = {Simple random walk on long range percolation clusters {I}:
              heat kernel bounds},
   JOURNAL = {Probab. Theory Related Fields},
  FJOURNAL = {Probability Theory and Related Fields},
    VOLUME = {154},
      YEAR = {2012},
    NUMBER = {3-4},
     PAGES = {753--786},
      ISSN = {0178-8051,1432-2064},
   MRCLASS = {60K35 (05C81 60Fxx 60K37)},
  MRNUMBER = {3000561},
MRREVIEWER = {Andrew\ R.\ Wade},
       DOI = {10.1007/s00440-011-0383-2},
       URL = {https://doi-org.tum-eaccess.de/10.1007/s00440-011-0383-2},
}

@article {CrawfordSly2,
    AUTHOR = {Crawford, Nicholas and Sly, Allan},
     TITLE = {Simple random walk on long-range percolation clusters {II}:
              scaling limits},
   JOURNAL = {Ann. Probab.},
  FJOURNAL = {The Annals of Probability},
    VOLUME = {41},
      YEAR = {2013},
    NUMBER = {2},
     PAGES = {445--502},
      ISSN = {0091-1798,2168-894X},
   MRCLASS = {60G50 (60F17 60G52 60K35 60K37)},
  MRNUMBER = {3077517},
MRREVIEWER = {Andrew\ R.\ Wade},
       DOI = {10.1214/12-AOP774},
       URL = {https://doi-org.tum-eaccess.de/10.1214/12-AOP774},
}

@article {BenjaminiMossel,
    AUTHOR = {Benjamini, Itai and Mossel, Elchanan},
     TITLE = {On the mixing time of a simple random walk on the super
              critical percolation cluster},
   JOURNAL = {Probab. Theory Related Fields},
  FJOURNAL = {Probability Theory and Related Fields},
    VOLUME = {125},
      YEAR = {2003},
    NUMBER = {3},
     PAGES = {408--420},
      ISSN = {0178-8051,1432-2064},
   MRCLASS = {60G50 (60K35)},
  MRNUMBER = {1967022},
MRREVIEWER = {Sven\ Erick\ Alm},
       DOI = {10.1007/s00440-002-0246-y},
       URL = {https://doi-org.tum-eaccess.de/10.1007/s00440-002-0246-y},
}

@article {BaumlerShapeTheo,
    AUTHOR = {B\"aumler, Johannes},
     TITLE = {Continuity of the critical value and a shape theorem for
              long-range percolation},
   JOURNAL = {Stochastic Process. Appl.},
  FJOURNAL = {Stochastic Processes and their Applications},
    VOLUME = {199},
      YEAR = {2026},
     PAGES = {Paper No. 104982, 29},
      ISSN = {0304-4149,1879-209X},
   MRCLASS = {82B43 (05C12 05C81 60K35)},
  MRNUMBER = {5068923},
       DOI = {10.1016/j.spa.2026.104982},
       URL = {https://doi-org.tum-eaccess.de/10.1016/j.spa.2026.104982},
}

@article {BaumlerDistance1,
    AUTHOR = {B\"aumler, Johannes},
     TITLE = {Distances in {$\frac{1}{\Vert x-y\Vert^{2d}}$} percolation
              models for all dimensions},
   JOURNAL = {Comm. Math. Phys.},
  FJOURNAL = {Communications in Mathematical Physics},
    VOLUME = {404},
      YEAR = {2023},
    NUMBER = {3},
     PAGES = {1495--1570},
      ISSN = {0010-3616,1432-0916},
   MRCLASS = {60K35 (82B27 82B43)},
  MRNUMBER = {4677584},
       DOI = {10.1007/s00220-023-04861-z},
       URL = {https://doi-org.tum-eaccess.de/10.1007/s00220-023-04861-z},
}

@article {BergerLRP,
    AUTHOR = {Berger, Noam},
     TITLE = {Transience, recurrence and critical behavior for long-range
              percolation},
   JOURNAL = {Comm. Math. Phys.},
  FJOURNAL = {Communications in Mathematical Physics},
    VOLUME = {226},
      YEAR = {2002},
    NUMBER = {3},
     PAGES = {531--558},
      ISSN = {0010-3616,1432-0916},
   MRCLASS = {82B43 (60J45 60K35)},
  MRNUMBER = {1896880},
MRREVIEWER = {Olle\ H\"aggstr\"om},
       DOI = {10.1007/s002200200617},
       URL = {https://doi-org.tum-eaccess.de/10.1007/s002200200617},
}

@article{MitscheMartinezKiwi,
      title={Mixing time and isoperimetry in random geometric graphs}, 
      author={Marcos Kiwi and Carlos Martinez and Dieter Mitsche},
      year={2025},
      eprint={2510.19951},
      archivePrefix={arXiv},
      primaryClass={math.PR},
      journal={Preprint available at https://arxiv.org/abs/2510.19951}, 
}

@article{HaalandSarkovic,
      title={Mixing time of the random walk on the giant component of the random geometric graph}, 
      author={Magnus H. Haaland and An{\dj}ela \v{S}arkovi\'{c}},
      year={2025},
      eprint={2512.21322},
      archivePrefix={arXiv},
      primaryClass={math.PR},
      journal={Preprint available at https://arxiv.org/abs/2512.21322}, 
}

@article {BiskupDistance1,
    AUTHOR = {Biskup, Marek},
     TITLE = {On the scaling of the chemical distance in long-range
              percolation models},
   JOURNAL = {Ann. Probab.},
  FJOURNAL = {The Annals of Probability},
    VOLUME = {32},
      YEAR = {2004},
    NUMBER = {4},
     PAGES = {2938--2977},
      ISSN = {0091-1798,2168-894X},
   MRCLASS = {60K35 (05C80 82B28 82B43)},
  MRNUMBER = {2094435},
MRREVIEWER = {Olivier\ Raimond},
       DOI = {10.1214/009117904000000577},
       URL = {https://doi-org.tum-eaccess.de/10.1214/009117904000000577},
}

@article {BiskupDiameter,
    AUTHOR = {Biskup, Marek},
     TITLE = {Graph diameter in long-range percolation},
   JOURNAL = {Random Structures Algorithms},
  FJOURNAL = {Random Structures \& Algorithms},
    VOLUME = {39},
      YEAR = {2011},
    NUMBER = {2},
     PAGES = {210--227},
      ISSN = {1042-9832,1098-2418},
   MRCLASS = {60K35 (05C12 05C80)},
  MRNUMBER = {2850269},
MRREVIEWER = {Maria\ Deijfen},
       DOI = {10.1002/rsa.20349},
       URL = {https://doi-org.tum-eaccess.de/10.1002/rsa.20349},
}

@article {BiskupOscillation,
    AUTHOR = {Biskup, Marek and Krieger, Andrew},
     TITLE = {Arithmetic oscillations of the chemical distance in long-range
              percolation on {$\Bbb Z^d$}},
   JOURNAL = {Ann. Appl. Probab.},
  FJOURNAL = {The Annals of Applied Probability},
    VOLUME = {34},
      YEAR = {2024},
    NUMBER = {3},
     PAGES = {2986--3017},
      ISSN = {1050-5164,2168-8737},
   MRCLASS = {60K35 (05C80 82B43)},
  MRNUMBER = {4757496},
MRREVIEWER = {John\ Haslegrave},
       DOI = {10.1214/23-aap2031},
       URL = {https://doi-org.tum-eaccess.de/10.1214/23-aap2031},
}

@article {BiskupKumagaiQuenched,
    AUTHOR = {Biskup, Marek and Chen, Xin and Kumagai, Takashi and Wang,
              Jian},
     TITLE = {Quenched invariance principle for a class of random
              conductance models with long-range jumps},
   JOURNAL = {Probab. Theory Related Fields},
  FJOURNAL = {Probability Theory and Related Fields},
    VOLUME = {180},
      YEAR = {2021},
    NUMBER = {3-4},
     PAGES = {847--889},
      ISSN = {0178-8051,1432-2064},
   MRCLASS = {60F17 (60J45)},
  MRNUMBER = {4288333},
MRREVIEWER = {Roman\ Urban},
       DOI = {10.1007/s00440-021-01059-z},
       URL = {https://doi-org.tum-eaccess.de/10.1007/s00440-021-01059-z},
}

@article {KumagaiLocalLimit,
    AUTHOR = {Chen, Xin and Kumagai, Takashi and Wang, Jian},
     TITLE = {Quenched local limit theorem for random conductance models
              with long-range jumps},
   JOURNAL = {Ann. Probab.},
  FJOURNAL = {The Annals of Probability},
    VOLUME = {54},
      YEAR = {2026},
    NUMBER = {3},
     PAGES = {1478--1529},
      ISSN = {0091-1798,2168-894X},
   MRCLASS = {60K37 (60F17 60J45)},
  MRNUMBER = {5061899},
       DOI = {10.1214/25-aop1794},
       URL = {https://doi-org.tum-eaccess.de/10.1214/25-aop1794},
}

@article {AizenmanNewman,
    AUTHOR = {Aizenman, Michael and Newman, Charles M.},
     TITLE = {Discontinuity of the percolation density in one-dimensional
              {$1/|x-~y|^2$} percolation models},
   JOURNAL = {Comm. Math. Phys.},
  FJOURNAL = {Communications in Mathematical Physics},
    VOLUME = {107},
      YEAR = {1986},
    NUMBER = {4},
     PAGES = {611--647},
      ISSN = {0010-3616,1432-0916},
   MRCLASS = {82A43},
  MRNUMBER = {868738},
MRREVIEWER = {R.\ T.\ Smythe},
       URL = {http://projecteuclid.org.tum-eaccess.de/euclid.cmp/1104116233},
}

@incollection {ConductanceMethod,
    AUTHOR = {Lov\'asz, L\'aszl\'o{} and Kannan, Ravi},
     TITLE = {Faster mixing via average conductance},
 BOOKTITLE = {Annual {ACM} {S}ymposium on {T}heory of {C}omputing
              ({A}tlanta, {GA}, 1999)},
     PAGES = {282--287},
 PUBLISHER = {ACM, New York},
      YEAR = {1999},
      ISBN = {1-58113-067-8},
   MRCLASS = {68W99 (65C50)},
  MRNUMBER = {1798047},
       DOI = {10.1145/301250.301317},
       URL = {https://doi-org.tum-eaccess.de/10.1145/301250.301317},
}

@article {PeteIso,
    AUTHOR = {Pete, G\'abor},
     TITLE = {A note on percolation on {$\Bbb Z^d$}: isoperimetric profile
              via exponential cluster repulsion},
   JOURNAL = {Electron. Commun. Probab.},
  FJOURNAL = {Electronic Communications in Probability},
    VOLUME = {13},
      YEAR = {2008},
     PAGES = {377--392},
      ISSN = {1083-589X},
   MRCLASS = {60K35 (82B43 82D30)},
  MRNUMBER = {2415145},
       DOI = {10.1214/ECP.v13-1390},
       URL = {https://doi-org.tum-eaccess.de/10.1214/ECP.v13-1390},
}

@article {PeresSousi,
    AUTHOR = {Peres, Yuval and Sousi, Perla},
     TITLE = {Mixing times are hitting times of large sets},
   JOURNAL = {J. Theoret. Probab.},
  FJOURNAL = {Journal of Theoretical Probability},
    VOLUME = {28},
      YEAR = {2015},
    NUMBER = {2},
     PAGES = {488--519},
      ISSN = {0894-9840,1572-9230},
   MRCLASS = {60J10},
  MRNUMBER = {3370663},
MRREVIEWER = {Ramon\ van Handel},
       DOI = {10.1007/s10959-013-0497-9},
       URL = {https://doi-org.tum-eaccess.de/10.1007/s10959-013-0497-9},
}

@article {Oliveira,
    AUTHOR = {Oliveira, Roberto Imbuzeiro},
     TITLE = {Mixing and hitting times for finite {M}arkov chains},
   JOURNAL = {Electron. J. Probab.},
  FJOURNAL = {Electronic Journal of Probability},
    VOLUME = {17},
      YEAR = {2012},
     PAGES = {no. 70, 12},
      ISSN = {1083-6489},
   MRCLASS = {60J27 (60J10)},
  MRNUMBER = {2968677},
       DOI = {10.1214/EJP.v17-2274},
       URL = {https://doi-org.tum-eaccess.de/10.1214/EJP.v17-2274},
}

@article {BergerBenjaminiYadin,
    AUTHOR = {Benjamini, Itai and Berger, Noam and Yadin, Ariel},
     TITLE = {Long-range percolation mixing time},
   JOURNAL = {Combin. Probab. Comput.},
  FJOURNAL = {Combinatorics, Probability and Computing},
    VOLUME = {17},
      YEAR = {2008},
    NUMBER = {4},
     PAGES = {487--494},
      ISSN = {0963-5483,1469-2163},
   MRCLASS = {60K35},
  MRNUMBER = {2433935},
MRREVIEWER = {Maria\ Deijfen},
       DOI = {10.1017/S0963548308008948},
       URL = {https://doi-org.tum-eaccess.de/10.1017/S0963548308008948},
}

@article{PenrosePisztora,
AUTHOR={Penrose, Mathew D. and Pisztora, Agoston},
TITLE={Large deviations for discrete and continuous percolation},
JOURNAL={Adv. in Appl. Probab.},
VOLUME={28},
YEAR={1996},
PAGES={29--52}
}

@article{EasoHutchcroft,
AUTHOR={Easo, Philip and Hutchcroft, Tom},
TITLE={Supercritical percolation on finite transitive graphs I: Uniqueness of the giant component},
JOURNAL={Duke Mathematical Journal},
VOLUMEE={173},
YEAR={2024},
NUMBER={13},
PAGES={2563--2618}
}

@phdthesis{montenegro2002faster,
  title={Faster mixing by isoperimetric inequalities},
  author={Montenegro, Ravi R.},
  year={2002},
  school={Yale University}
}

@article {AntalPisztora,
    AUTHOR = {Antal, Peter and Pisztora, Agoston},
     TITLE = {On the chemical distance for supercritical {B}ernoulli
              percolation},
   JOURNAL = {Ann. Probab.},
  FJOURNAL = {The Annals of Probability},
    VOLUME = {24},
      YEAR = {1996},
    NUMBER = {2},
     PAGES = {1036--1048},
      ISSN = {0091-1798,2168-894X},
   MRCLASS = {60K35 (82B43)},
  MRNUMBER = {1404543},
MRREVIEWER = {Mathew\ D.\ Penrose},
       DOI = {10.1214/aop/1039639377},
       URL = {https://doi-org.tum-eaccess.de/10.1214/aop/1039639377},
}

@article {KestenGWT,
    AUTHOR = {Kesten, Harry},
     TITLE = {Subdiffusive behavior of random walk on a random cluster},
   JOURNAL = {Ann. Inst. H. Poincar\'e{} Probab. Statist.},
  FJOURNAL = {Annales de l'Institut Henri Poincar\'e. Probabilit\'es et
              Statistique},
    VOLUME = {22},
      YEAR = {1986},
    NUMBER = {4},
     PAGES = {425--487}
}

@article{BergerTokushige,
      title={Scaling limits for random walks on long range percolation clusters}, 
      author={Noam Berger and Yuki Tokushige},
      year={2024},
      eprint={2403.18532},
      archivePrefix={arXiv},
      primaryClass={math.PR},
      journal={Preprint available at https://arxiv.org/abs/2403.18532}, 
}

@book {HighDim,
    AUTHOR = {Vershynin, Roman},
     TITLE = {High-dimensional probability},
    SERIES = {Cambridge Series in Statistical and Probabilistic Mathematics},
    VOLUME = {47},
      NOTE = {An introduction with applications in data science,
              With a foreword by Sara van de Geer},
 PUBLISHER = {Cambridge University Press, Cambridge},
      YEAR = {2018},
     PAGES = {xiv+284},
      ISBN = {978-1-108-41519-4},
   MRCLASS = {60-01 (60B05 60B20 60E15 60Fxx 62H25)},
  MRNUMBER = {3837109},
MRREVIEWER = {Sasha\ Sodin},
       DOI = {10.1017/9781108231596},
       URL = {https://doi-org.tum-eaccess.de/10.1017/9781108231596},
}

@article{SmallWorldLRP,
      title={Random walk on the small-world network model in 3 or more dimensions}, 
      author={Zsuzsanna Baran and Jonathan Hermon and Andela \v{S}arkovi\'{c} and Allan Sly and Perla Sousi},
      journal={Preprint available at https://arxiv.org/abs/2609.12056},
      year={2026},
}

@article {CroydonCanKumagai,
    AUTHOR = {Can, V. H. and Croydon, D. A. and Kumagai, T.},
     TITLE = {Spectral dimension of simple random walk on a long-range
              percolation cluster},
   JOURNAL = {Electron. J. Probab.},
  FJOURNAL = {Electronic Journal of Probability},
    VOLUME = {27},
      YEAR = {2022},
     PAGES = {Paper No. 56, 37},
      ISSN = {1083-6489},
   MRCLASS = {60K35 (35K05 60J35 60J74 60K37)},
  MRNUMBER = {4417202},
MRREVIEWER = {Elena\ Kosygina},
       DOI = {10.1214/22-ejp783},
       URL = {https://doi-org.tum-eaccess.de/10.1214/22-ejp783},
}
\end{document}